 \newcommand{\set}[1]{\left\{#1\right\}}
\newcommand{\bigset}[1]{\bigl\{ #1 \bigr\}}
\newcommand{\abs}[1]{\left| #1\right|}
\newcommand{\bigabs}[1]{\bigl| #1 \bigr|}
\newcommand{\biggabs}[1]{\biggl| #1 \biggr|}
\newcommand{\Biggabs}[1]{\Biggl| #1 \Biggr|}
\newcommand{\sqbrac}[1]{\left[ #1 \right]}
\newcommand{\ceil}[1]{\left\lceil #1 \right\rceil}
\newcommand{\floor}[1]{\left\lfloor #1 \right\rfloor}
\newcommand{\brac}[1]{\left( #1 \right)}
\newcommand{\bigbrac}[1]{\bigl( #1 \bigr)}
\newcommand{\biggbrac}[1]{\biggl( #1 \biggr)}
\newcommand{\Biggbrac}[1]{\Biggl( #1 \Biggr)}
\newcommand{\norm}[1]{\left\| #1\right\|}
\newcommand{\bignorm}[1]{\big\| #1 \big\|}
\newcommand{\ang}[1]{\left\langle#1\right\rangle}
\newcommand{\recip}[1]{\frac{1}{#1}}
\newcommand{\trecip}[1]{\tfrac{1}{#1}}
\newcommand{\N}{\mathbb{N}}
\newcommand{\Z}{\mathbb{Z}}
\newcommand{\R}{\mathbb{R}}
\newcommand{\C}{\mathbb{C}}
\newcommand{\T}{\mathbb{T}}
\newcommand{\E}{\mathbb{E}}
\newcommand{\hcf}{\mathrm{hcf}}
\newcommand{\meas}{\mathrm{meas}}
\newcommand{\intd}{\mathrm{d}}
\newcommand{\supp}{\mathrm{supp}}
\newcommand{\eps}{\varepsilon}
\newcommand{\hash}{\#}
\let\@@pmod\pmod
\DeclareRobustCommand{\pmod}{\@ifstar\@pmods\@@pmod}
\def\@pmods#1{\mkern4mu({\operator@font mod}\mkern 6mu#1)}
\renewcommand{\leq}{\leqslant}
\renewcommand{\geq}{\geqslant}
\newtheorem{theorem}{Theorem}[section]
\newtheorem{question}[theorem]{Question}
\newtheorem{conjecture}[theorem]{Conjecture}
\newtheorem{corollary}[theorem]{Corollary}
\newtheorem{proposition}[theorem]{Proposition}
\newtheorem{lemma}[theorem]{Lemma}
\theoremstyle{definition}
\newtheorem{definition}[theorem]{Definition}
\newtheorem*{remark}{Remark}
\begin{document}

\begin{frontmatter}[classification=text]


\author[sean]{Sean Prendiville}

\begin{abstract}
We show how to adapt the Hardy--Littlewood circle method to count monochromatic solutions to diagonal Diophantine equations. This delivers a lower bound which is optimal up to absolute constants. The  method is illustrated on equations obtained by setting a diagonal quadratic form equal to a linear form. As a consequence, we determine an algebraic criterion for when such equations are partition regular. Our methods involve discrete harmonic analysis and require a number of `mixed' restriction estimates, which may be of independent interest.
\end{abstract}
\end{frontmatter}

\setcounter{tocdepth}{1}
\tableofcontents

\section{Introduction}\label{introduction}

The Hardy--Littlewood circle method has been tremendously successful in delivering asymptotic estimates for the number of solutions to a given diagonal Diophantine equation \cite{DavenportAnalytic, 
VaughanHardy, VaughanWooleyWaring, WooleyTranslation}, even when variables are constrained to arithmetically structured sets such as primes  \cite{HuaAdditive}. It is less apparent how to count solutions to equations within unstructured sets of integers - those for which we have only combinatorial information. In this article we illustrate an adaptation of the circle method designed to count solutions to an equation with variables  constrained to a cell of a partition. Prototypical of our results is the following counting version of a theorem of Bergelson \cite{BergelsonErgodicUpdate}.


%

%
\begin{theorem}\label{bergelson lower bound}
For any $r$-colouring $C_1\cup \dots \cup C_r = \{1,2,\dots, N\}$ there exists a colour class $C_j$ such that\footnote{For our conventions regarding asymptotic notation, see \S\ref{notation}.}
\begin{equation}\label{bergelson bound eqn}
\sum_{x-y = z^2} 1_{C_j}(x)1_{C_j}(y)1_{C_j}(z)\gg_r N^{3/2^r}(1-o_r(1)).
\end{equation}
\end{theorem}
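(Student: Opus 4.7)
I would proceed by induction on the number of colours $r$, with base case $r = 1$ being immediate: the trivial count of triples $(x, y, z) \in [N]^3$ with $x - y = z^2$ is $\gg N^{3/2}$, parametrised by $z \in [N^{1/2}]$ and $y \in [N - z^2]$.

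For the inductive step, write $M = N^{1/2}$ so that the target $N^{3/2^r}$ equals $M^{3/2^{r-1}}$. Given an $r$-colouring $C_1 \cup \dots \cup C_r = [N]$, the plan is to execute a dichotomy based on the joint density profile of the colouring across the ``linear'' scale $[N]$, on which $x$ and $y$ live, and the ``quadratic'' scale $[M]$, on which $z$ lives. In the \emph{dense} case there is a colour $C_j$ with $|C_j \cap [N]| \gg_r N$ and $|C_j \cap [M]| \gg_r M$; here a Fourier-analytic count in the spirit of S\'ark\"ozy should produce $\gg_r N^{3/2}$ monochromatic triples in $C_j^3$, comfortably beating the target. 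In the \emph{sparse} case, the failure of any such jointly dense colour class is exploited to pass to a sub-interval (or Bohr-type substructure) of scale $\gg_r M$ on which the effective number of colours drops to $r-1$; the induction hypothesis then yields $\gg_r M^{3/2^{r-1}} = N^{3/2^r}$ monochromatic solutions, automatically inherited by the original colouring on $[N]$. Absorbing the losses from each reduction gives the claimed factor $1 - o_r(1)$.

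The main obstacle is the Fourier analysis in the dense case, because the counting operator mixes a linear exponential sum, in the $x - y$ variable of size $N$, with a quadratic one in $z$ of size $M = N^{1/2}$. A quantitatively sharp lower bound with the correct dependence on the colour-density parameters requires a \emph{mixed} restriction estimate controlling this pairing of two exponential sums of different algebraic degree, and establishing this estimate is the technical heart of the argument; it is prototypical of the mixed restriction inputs flagged in the abstract as needed for the more general equations treated later. A secondary technical point is cleanly executing the sparse case so that the sub-interval inherited by the induction is a genuine structured set rather than an arbitrary subset, which may call for a preliminary passage to a Bohr set or major-arc type arithmetic progression.
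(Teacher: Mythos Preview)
Your dense case has a genuine gap: joint density of a colour class on both scales $[N]$ and $[M]=[N^{1/2}]$ is \emph{not} sufficient to produce any monochromatic solutions, let alone $\gg_r N^{3/2}$ of them. Take the $3$-colouring of $[N]$ by residue classes modulo $3$ and consider the colour $C_2 := \{n \in [N] : n \equiv 1 \pmod 3\}$. This class has density $1/3$ on both $[N]$ and $[M]$, yet for $x,y \in C_2$ one has $x-y \equiv 0 \pmod 3$, whilst for $z \in C_2$ one has $z^2 \equiv 1 \pmod 3$; there are therefore \emph{zero} triples $(x,y,z) \in C_2^3$ with $x-y=z^2$. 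A S\'ark\"ozy-type Fourier argument applied to $C_2$ would detect a large Fourier coefficient at $1/3$ and attempt a density increment onto a subprogression of common difference $3$, but $C_2$ has density zero there, so the increment dies. The obstruction is local: the ``correct'' colour class must be selected not by raw density but by its density on a quadratic Bohr set which encodes these local constraints.

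This is exactly what the paper does. Rather than induction on $r$, it applies the arithmetic regularity lemma to decompose each $1_{C_i}$, extracts a quadratic Bohr set $B_2 \subset [M]$ on which all the structured parts are essentially constant, and then uses pigeonhole to pick the colour $C_j$ with $|C_j \cap B_2| \geq |B_2|/r$. This asymmetric choice yields many triples with $x,y \in A$, $z \in C_j$ for \emph{every} dense $A$; a separate ``cleaving'' argument across the scales $N, N^{1/2}, N^{1/4}, \dots$ then guarantees that $C_j$ is itself dense at some earlier scale, so one may take $A = C_j$ (after translation). Your sparse case also does not close: recolouring a $\delta$-sparse class on $[M]$ introduces up to $O(\delta M^{3/2})$ spurious solutions, which swamps the inductive gain $M^{3/2^{r-1}}$ once $r \geq 3$.

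Finally, you have misidentified the technical input. For the equation $x-y=z^2$ the Fourier control lemma only pairs two copies of $\hat f$ against one quadratic sum $\tilde 1_B$, and Bourgain's $L^6$ restriction estimate for squares suffices; no mixed estimate is required. The mixed restriction estimates in the paper enter only for the later theorems where the dense variables are themselves squares.
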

The lower bound in \eqref{bergelson bound eqn} is far from the total number of solutions to the equation $x-y=z^2$ in the interval $[N] = \{1,2,\dots, N\}$, which is of order $N^{3/2}$.  However, the order of magnitude in \eqref{bergelson bound eqn} is optimal, as can be seen from the colouring
\begin{equation}\label{bad colouring}
C_1 := (N^{1/2}, N],\quad 
\dots\quad ,\quad C_{r-1} := (N^{1/2^{r-1}}, N^{1/2^{r-2}}], \quad C_r := [N^{1/2^{r-1}}].
\end{equation}
\begin{proposition}\label{moreira upper bound}
There exists an $r$-colouring of $[N]$ with at most $O(N^{3/2^r})$ monochromatic solutions to the equation $x-y = z^2$.
\end{proposition}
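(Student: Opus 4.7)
The colouring $C_1,\dots,C_r$ displayed in \eqref{bad colouring} is precisely the construction needed; my plan is to verify directly that it produces at most $O(N^{3/2^r})$ monochromatic solutions to $x-y=z^2$. The verification splits naturally into the ``annular'' classes $C_j$ for $j<r$ and the initial interval $C_r$, and I would treat these in turn.

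For $1 \leq j < r$ one has $C_j = (N^{1/2^j},\, N^{1/2^{j-1}}]$, and I claim this class supports \emph{no} solution to $x-y=z^2$ at all. Indeed, any $z \in C_j$ satisfies $z^2 > (N^{1/2^j})^2 = N^{1/2^{j-1}}$, whereas any $x,y \in C_j$ obey $|x-y| < N^{1/2^{j-1}}$; so the equation $x-y=z^2$ is inconsistent with all three variables lying in $C_j$. Hence these classes contribute zero.

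For $j=r$, write $M := \lfloor N^{1/2^{r-1}} \rfloor$, so that $C_r = [M]$. Each triple $(x,y,z) \in C_r^3$ satisfying $x-y = z^2$ is determined by the pair $(y,z) \in [M] \times [M^{1/2}]$ via $x := y + z^2$, so there are at most $M \cdot M^{1/2} = M^{3/2} \leq N^{3/2^r}$ such solutions. Summing over all colour classes gives the claimed upper bound of $O(N^{3/2^r})$.

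As for obstacles, there are none of substance: once the colouring in \eqref{bad colouring} is produced, each class is handled by a direct size comparison, and the annular classes are forbidden from containing solutions by an elementary width-versus-square inequality. The only step worth flagging explicitly is the exponent arithmetic $(N^{1/2^{r-1}})^{3/2} = N^{3/2^r}$, which pins down why the geometric breakpoints $N^{1/2^j}$ are the extremal choice and which retrospectively explains the form of the exponent in Theorem \ref{bergelson lower bound}.
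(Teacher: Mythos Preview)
Your proof is correct and is exactly the verification the paper intends: the paper states the proposition immediately after displaying the colouring \eqref{bad colouring} with the phrase ``as can be seen from the colouring'', leaving the details implicit. Your argument fills in precisely those details --- the annular classes $C_j$ for $j<r$ admit no solutions by the width-versus-square comparison, and $C_r$ contributes $O(N^{3/2^r})$ by the trivial count --- so there is nothing to add.
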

The argument underlying Theorem \ref{bergelson lower bound} utilises the Fourier-analytic regularity lemma of Green \cite{GreenSzemeredi}, which has a convenient formulation due to Green and Tao \cite{GreenTaoArithmetic}. The robustness of the regularity lemma allows us to prove the following generalisation of Theorem \ref{bergelson lower bound}.
\begin{theorem}[Linear counting theorem]\label{linear form satisfies rado}
Let $a_1,\dots, a_s, b_1,\dots, b_t \in \Z\setminus\set{0}$  and suppose that there exists $I \neq \emptyset$ such that $\sum_{i \in I}a_i = 0$. For any $r$-colouring $C_1 \cup \dots \cup C_r = [N]$ there exists a colour class $C_j$ such that
\begin{equation}
\sum_{a_1x_1 + \dots + a_s x_s = b_1y_1^2 + \dots + b_ty_t^2} 1_{C_j}(x_1) \dotsm 1_{C_j}(x_s)1_{C_j}(y_1) \dotsm 1_{C_j}(y_t)\\ \gg_{ r} N^{(|I|+s+t-2)/2^r}(1-o_r(1)).
\end{equation}
(Here we have suppressed the dependence of implicit constants on $a_i$ and $b_j$.)
\end{theorem}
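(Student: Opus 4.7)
My plan is to argue by induction on the number of colours $r$, using the Green--Tao arithmetic regularity lemma as the main engine. In the base case $r=1$, the colour class is all of $[N]$, and a standard Hardy--Littlewood circle method calculation gives $\gg N^{s+t/2-1}$ total solutions to $\sum_i a_i x_i = \sum_j b_j y_j^2$ in $[N]^{s+t}$. Since $|I|\leq s$, this exceeds $N^{(|I|+s+t-2)/2}$, matching the target exponent for $r=1$ (with room to spare whenever $|I|<s$).

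For the inductive step, assume the theorem for $r-1$ colours. The plan is to locate an arithmetic progression $P\subset[N]$ of length $L\asymp\sqrt{N}$ on which either (a) one colour class is essentially absent, so that $P$ carries an effective $(r-1)$-colouring to which the inductive hypothesis applies and yields $\gg L^{(|I|+s+t-2)/2^{r-1}}=N^{(|I|+s+t-2)/2^r}$ monochromatic solutions; or (b) some colour class has density bounded away from zero on $P$, in which case a direct density-counting argument on $P$ gives at least $\delta^{O_r(1)}|P|^{s+t/2-1}$ monochromatic solutions, which again dominates $N^{(|I|+s+t-2)/2^r}$ since $|I|\leq s$. Either way the target bound is met.

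To realise this dichotomy I would apply the arithmetic regularity lemma of Green and Tao to each indicator $1_{C_j}$ simultaneously, producing a common decomposition $1_{C_j}=f_{j,1}+f_{j,2}+f_{j,3}$ on a joint partition of $[N]$ into long progressions of common difference $q$. Here $f_{j,1}$ is locally constant with value $\delta_{j,P}$ on each progression $P$, $f_{j,2}$ is $L^2$-small, and $f_{j,3}$ is Fourier-uniform. Expanding the counting functional $T(1_{C_j},\dots,1_{C_j})$ multilinearly in this decomposition, the main term from the all-$f_{j,1}$ contribution reduces to a weighted count of solutions on a single progression $P$, of size roughly $\delta_{j,P}^{s+t}|P|^{s+t/2-1}$; applying pigeonhole to the local densities $(\delta_{j,P})_j$ then yields the dichotomy described above.

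The principal obstacle is controlling the error terms in this expansion, in particular those involving the Fourier-uniform factor $f_{j,3}$. Because our equation couples a linear exponential sum $\sum_x e(\alpha x)$ with the quadratic Gauss sum $\sum_y e(\alpha y^2)$, linear Fourier uniformity alone does not suffice to dominate these terms via Cauchy--Schwarz and Plancherel in the usual way. One must instead establish restriction-type $L^p$ inequalities that combine Fourier uniformity of $f_{j,3}$ against the arithmetic measure supported on squares, together with a major/minor arc analysis of the quadratic Weyl sum. These ``mixed restriction estimates'', advertised in the abstract, form the core technical input and are where I expect the bulk of the work to lie.
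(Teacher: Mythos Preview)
Your dichotomy in the inductive step has a genuine gap. Case (b) asserts that if a colour class $C_j$ has density $\delta$ bounded below on an arithmetic progression $P$, then a direct density argument yields $\gg_\delta |P|^{s+t/2-1}$ solutions with \emph{all} variables in $C_j\cap P$. This is false. Take the equation $x_1-x_2=y^2$ and the colour class $C_j=\{n\in[N]:n\equiv 1\pmod 3\}$: it has density $1/3$, yet admits no solution whatsoever, since $x_1-x_2\equiv 0$ while $y^2\equiv 1\pmod 3$. Density of $C_j$ forces many pairs $x_1,x_2\in C_j$ with $x_1-x_2$ a perfect square (this is F\"urstenberg--S\'ark\"ozy), but gives no control over whether the square root $y$ lies in $C_j$. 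Forcing the quadratic variables into the same colour class is precisely the content of Bergelson's theorem, and it does not follow from density alone. Moreover, since by pigeonhole some colour always has density at least $1/r$ on any $P$, you are always in case (b), so the induction on $r$ is never actually invoked and the argument collapses to the flawed density claim.

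The paper circumvents this by proving an \emph{asymmetric} density--colouring result (Theorem~\ref{linear density}): given dense sets $A_1,\dots,A_r\subset[N]$ \emph{and} an $r$-colouring of $[\sqrt{N}]$, there is a single colour class $C_j$ such that for \emph{every} $A_i$ one has many solutions with the linear variables in $A_i$ and the quadratic variables in $C_j$. The key is that $C_j$ is chosen uniformly for all dense $A_i$; one cannot simply take $A_i=C_j$ afterwards because $C_j$ may be sparse. The paper resolves this via a ``cleaving'' argument across scales $N^{1/2^n}$: one finds a scale $n\leq r$ at which every colour class dense at scale $n$ is already dense at some earlier scale, applies the density--colouring result with the $A_i$ equal to those dense classes, and then shows the resulting $C_j$ is itself forced to be dense at scale $n$, closing the loop. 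As a secondary point, the mixed restriction estimates you highlight are not needed here: they enter only in the quadratic counting theorem, whereas for Theorem~\ref{linear form satisfies rado} the Fourier-uniform error is handled by the more elementary Lemma~\ref{linear fourier control}, which uses only Parseval and the classical $L^6$ bound for the quadratic Weyl sum.
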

Turning to equations without linear terms, Chow, Lindqvist and the author \cite{CLPRado} have classified when diagonal homogeneous equations
\begin{equation}\label{diagonal quadric}
a_1x_1^k + \dots + a_sx_s^k = 0
\end{equation}
have solutions in some cell of a finite partition.
\begin{definition}[Partition regular]
We say that the equation $P(x_1, \dots, x_s) = 0$ is \emph{partition regular} if for any finite partition of the positive integers $\N = C_1 \cup \dots \cup C_r$ there exists $C_j$ and infinitely many $(x_1, \dots, x_s) \in C_j^s$ such that $P(x_1, \dots, x_s) = 0$. One may think of a partition into $r$ parts as a colouring with $r$ colours, in which case we call $(x_1, \dots, x_s)$ a \emph{monochromatic solution}.
\end{definition}
Rado \cite{RadoStudien} completely characterised which linear forms $P = a_1x_1+ \dots + a_sx_s$ are partition regular: it is both necessary and sufficient that there exists $I \neq \emptyset$ such that $\sum_{i \in I} a_i = 0$. The criterion for \eqref{diagonal quadric} is identical, subject to the caveat\footnote{The Fermat cubic illustrates that some such caveat is necessary.} that the number of variables $s$ is sufficiently large in terms of the degree $k$. For squares ($k=2$) we require $s \geq 5$ at present.  The methods of \cite{CLPRado} do not yield a lower bound on the number of monochromatic solutions to \eqref{diagonal quadric}, though it was conjectured \cite[\S3.1]{CLPRado} that such a result should be true.  The original motivation for the present paper is to settle this conjecture affirmatively.
\begin{theorem}\label{quadric counting}
Let $a_1, \dots, a_s \in \Z\setminus\set{0}$ with $s \geq  5$.  Suppose that there exists $I \neq \emptyset$ such that $\sum_{i \in I} a_i = 0$.  Then for any for any $r$-colouring $[N] = C_1 \cup \dots \cup C_r$ there exists $C \in \set{C_1, \dots, C_r}$ such that
$$
\sum_{a_1x_1^2 + \dots + a_sx_s^2 = 0}  1_C(x_1)\dotsm1_C(x_s)  \gg_{r} N^{s - 2}(1-o_{r}(1)).
$$
(Here we have suppressed the dependence of implicit constants on the coefficients $a_i$.)
\end{theorem}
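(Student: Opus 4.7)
The plan is to combine the Fourier-analytic arithmetic regularity lemma with restriction estimates for quadratic exponential sums, adapting the strategy used to prove Theorem \ref{bergelson lower bound}. For a colour class $C \subseteq [N]$ I would express the monochromatic solution count as the Fourier integral
$$
S(C) \;=\; \int_0^1 \prod_{i=1}^s f_{a_i,C}(\alpha)\, d\alpha, \qquad f_{a,C}(\alpha) := \sum_{x \in C} e(a x^2 \alpha),
$$
and apply Green's regularity lemma simultaneously to each indicator $1_{C_j}$, decomposing $1_{C_j} = g_j + h_j + e_j$, where $g_j$ is approximately constant on a common family of Bohr sets $B$, the function $h_j$ is Fourier-uniform at the scale of $B$, and $e_j$ has small $L^2$-norm. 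Expanding the Fourier integral via this decomposition yields a main term in which every factor is a structured piece $g_j$, together with a collection of error terms each containing at least one uniform or small factor.

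The first hard step is to bound these error terms, for which I would establish \emph{mixed restriction estimates}: $\ell^p$-type bounds on products $\int f_1\cdots f_s\, d\alpha$ in which some factors are exponential sums over squares of Fourier-uniform functions and others are arbitrary quadratic exponential sums supported in $[N]$. The hypothesis $s \geq 5$ enters precisely here, as it allows one to apply an $\ell^4$ Hua-type restriction estimate to four of the factors while absorbing the fifth in $L^\infty$ using the Fourier-uniformity of $h_j$; this is presumably the class of ``mixed restriction estimates'' advertised in the abstract.

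For the main term, the structured pieces reduce the problem to counting solutions to $\sum a_i x_i^2 = 0$ inside a common Bohr set $B$ on which some colour class $C_j$ has density $\gg_r 1$. By exploiting the dilation invariance of the equation and passing to a long arithmetic progression approximating $B$, the Hardy--Littlewood circle method yields $\gg N^{s-2}$ solutions; local solubility at every place is guaranteed by the Rado condition $\sum_{i \in I} a_i = 0$ together with $s \geq 5$, as in \cite{CLPRado}. The principal obstacle I anticipate is proving the mixed restriction estimates with sufficient uniformity in the regularity parameters, since this is what promotes the qualitative partition regularity of \cite{CLPRado} to a quantitative count; a secondary difficulty is arranging the decomposition so that the Bohr-set structure is compatible with the dilation symmetry exploited in the main term.
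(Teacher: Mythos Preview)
Your plan has a real gap at the step where you ``absorb the fifth factor in $L^\infty$ using the Fourier-uniformity of $h_j$''. The arithmetic regularity lemma gives you control of the \emph{linear} Fourier transform $\hat h_j(\alpha)=\sum_x h_j(x)e(\alpha x)$, but the factors in your integral are \emph{quadratic} exponential sums $\tilde h_j(\alpha)=\sum_x h_j(x)e(\alpha x^2)$, and smallness of $\|\hat h_j\|_\infty$ does not imply smallness of $\|\tilde h_j\|_\infty$. For instance $h(x)=\cos(2\pi\beta x^2)$ on $[N]$, with $\beta$ a generic irrational, has $\|\hat h\|_\infty\ll N^{1/2+o(1)}$ by Weyl's inequality, yet $\tilde h(\beta)=\tfrac12 N+O(N^{1/2})$. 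So expanding the count directly via the regularity decomposition of $1_{C_j}$ will not make the uniform error terms small; this is precisely why the equation $\sum a_i x_i^2=0$ is not of ``linear complexity'' in the raw variables $x_i$.

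The paper avoids this by a transference step before invoking regularity. Writing the variables indexed by the Rado set $I$ as $x_i=\zeta(W'x+\xi)$ with $W'$ a $W$-trick modulus and weighting by the majorant $\nu(n)\asymp\sqrt n$ on shifted squares, the quadratic pieces $a_ix_i^2$ become \emph{linear} in new variables $n_i$, so one is now counting solutions to $L_1(n)=L_2(y^2)$ with $n$-variables weighted by $\nu$. The dense-model lemma then replaces $\nu$-weighted indicators of the $n$-variables by genuinely bounded functions $g_i$, to which the \emph{abelian} regularity lemma applies; the resulting Fourier control lemma (Lemma~\ref{fourier control}) bounds the error terms in terms of $\|\hat f_i\|_\infty$, which is now the right quantity. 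The mixed restriction estimates enter here, coupling the linear sums in $n$ with the quadratic sums in $y$. Finally, the passage from this density--colouring statement to the full colouring theorem is not automatic: the paper uses a separate ``cleaving'' iteration (an $L^1$-control lemma forces the colour class singled out by the density argument to itself be dense), which your sketch does not address.
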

A standard application of the circle method (see \cite{VaughanHardy}) shows that the bound in Theorem \ref{quadric counting} is optimal.
\begin{proposition}
For any $a_1, \dots, a_s \in \Z\setminus\set{0}$ with $s \geq  5$ we have the upper bound
$$
\sum_{a_1x_1^2 + \dots + a_sx_s^2 = 0} 1_{[N]}(x_1)\dotsm1_{[N]}(x_s) \ll N^{s - 2}.
$$
\end{proposition}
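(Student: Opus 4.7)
The plan is to apply the standard Hardy--Littlewood circle method, as indicated in the paper and carried out in detail in \cite[Chapter 4]{VaughanHardy}. By orthogonality, the sum equals
$$
\int_0^1 \prod_{i=1}^s f_i(\alpha)\, \rd\alpha, \qquad f_i(\alpha) := \sum_{x \in [N]} e\brac{a_i \alpha x^2}.
$$
I would then dissect $\T$ into major arcs $\mathfrak{M}$ (narrow neighbourhoods of rationals $a/q$ with $q$ at most a small power of $N$) and the complementary minor arcs $\mathfrak{m}$, and bound the two contributions separately.

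On the minor arcs, Weyl's inequality furnishes a pointwise saving $\abs{f_i(\alpha)} \ll N^{1-\sigma}$ for some fixed $\sigma = \sigma(a_1,\dots,a_s) > 0$. Pulling $s-4$ factors out in $L^\infty(\mathfrak{m})$ and combining with Hua's lemma $\int_0^1 \abs{f_i(\alpha)}^4\, \rd\alpha \ll N^{2+o(1)}$ (with H\"older to handle the mixed product), I obtain
$$
\int_{\mathfrak{m}} \prod_{i=1}^s \abs{f_i(\alpha)}\, \rd\alpha \ll \brac{\sup_{\alpha\in\mathfrak{m}} \max_i \abs{f_i(\alpha)}}^{s-4} \cdot N^{2+o(1)} \ll N^{s-2-\sigma(s-4)+o(1)},
$$
which is $o\brac{N^{s-2}}$ because $s \geq 5$.

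On the major arcs, each Weyl sum admits the standard approximation $f_i(a/q+\beta) = q^{-1} S_i(q,a) v_i(\beta) + O(q^{1/2+o(1)})$, where $S_i$ is a quadratic Gauss sum and $v_i(\beta) := \int_0^N e\brac{a_i\beta t^2}\, \rd t$. Substituting and integrating factors the main term as a truncated singular series times the truncated singular integral $J(N)$: the singular series converges absolutely and is $O(1)$ once $s \geq 5$ (this is the classical role of the fifth variable), while the rescaling $t \mapsto Nt$ shows $J(N) \asymp N^{s-2}$. I expect the main obstacle to be purely bookkeeping---tracking the error terms in the major arc approximation for a product of $s$ Weyl sums with arbitrary (possibly mixed-sign) coefficients $a_i$---but no new ideas beyond those in \cite[Chapter 4]{VaughanHardy} are required.
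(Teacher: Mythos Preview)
Your proposal is correct and follows exactly the route the paper indicates: the paper gives no proof beyond the remark that this is ``a standard application of the circle method (see \cite{VaughanHardy})'', and your outline is precisely that standard application. One small remark: since only the upper bound is required, you need merely $J(N) \ll N^{s-2}$ on the major arcs (your $\asymp$ is not quite right if all $a_i$ share a sign), and in fact you could bypass the major/minor dissection entirely by applying H\"older and the moment bound $\int_\T |f_i|^p\,\rd\alpha \ll_p N^{p-2}$ for $p>4$ directly --- but what you wrote is perfectly acceptable and is what the reference to Vaughan intends.
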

%
 One consequence of celebrated work of Moreira \cite{MoreiraMonochromatic} is partition regularity of the equation
\begin{equation}\label{moreira eqn}
a_1x_1^2 + \dots + a_s x_s^2 = x_0,
\end{equation}
under the assumption that 
\begin{equation}\label{translation assumption}
a_1 + \dots + a_s = 0.
\end{equation}  
Moreira's methods are inductive, and locate a monochromatic solution arising from a special two-parameter subvariety. To obtain a counting result for \eqref{moreira eqn} by modifying these methods  seems unlikely to be possible. Using an alternative approach, we obtain a counting result for \eqref{moreira eqn} and in addition are able to substantially relax the assumption \eqref{translation assumption} on the coefficients. The price we pay for this strengthening is that we must assume the quadratic form has sufficiently many variables. 
\begin{theorem}[Quadratic counting theorem]\label{quadratic form satisfies rado}
Let $a_1, \dots, a_s \in \Z\setminus\set{0}$ and $b_1, \dots, b_t \in \Z\setminus\set{0}$ with $s \geq 3$ and $s+t \geq 5$.  Suppose that there exists $I \neq \emptyset$ such that $\sum_{i \in I} a_i = 0$.  Then for any  $r$-colouring $[N] = C_1 \cup \dots \cup C_r$ there exists $C \in \set{C_1, \dots, C_r}$ such that
$$
\sum_{a_1x_1^2 + \dots + a_sx_s^2 = b_1y_1 + \dots + b_t y_t} \prod_i 1_C(x_i) \prod_j 1_C(y_j) \gg_{r} N^{s+t - 2}(1-o_{r}(1)).
$$
(Here we have suppressed the dependence of implicit constants on $a_i$ and $b_j$.)
\end{theorem}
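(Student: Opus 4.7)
My plan is to mimic the strategy of Theorem \ref{bergelson lower bound} and Theorem \ref{quadric counting}: apply the Fourier-analytic arithmetic regularity lemma to the colouring, isolate a dense structured piece within one colour class on a common Bohr set, handle the main term by exploiting the Rado condition via a diagonal trick, and control the remaining Gowers-uniform contributions using the paper's mixed restriction estimates for quadratic and linear phases.

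More concretely, I would first decompose each indicator as $1_{C_j} = f^{(j)}_{\mathrm{str}} + f^{(j)}_{\mathrm{sml}} + f^{(j)}_{\mathrm{unf}}$, where $f^{(j)}_{\mathrm{str}}$ is a bounded-complexity function essentially constant on a common Bohr set $B \subseteq [N]$, $f^{(j)}_{\mathrm{sml}}$ is negligible in $L^2$, and $f^{(j)}_{\mathrm{unf}}$ is small in the appropriate Gowers-type norm demanded by the mixed restriction estimates. By pigeonhole applied to the identity $\sum_j 1_{C_j} \equiv 1$, there is a colour class $C = C_{j_0}$ and a translate of $B$ on which $f^{(j_0)}_{\mathrm{str}}$ averages at least $\trecip{r} - o(1)$.

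For the main term, I would restrict attention to the subvariety where $x_i = x$ for all $i \in I$. The hypothesis $\sum_{i \in I} a_i = 0$ then cancels the quadratic contribution indexed by $I$, reducing the defining equation to $\sum_{j \notin I} a_j x_j^2 = \sum_k b_k y_k$ coupled with the free parameter $x \in C$. Counting solutions to this reduced equation with all variables in a dense subset of a Bohr set is a Hardy--Littlewood circle-method task: the assumption $s \geq 3$ ensures enough quadratic variables to execute the major-arc analysis and produce the expected singular-series contribution, while $s + t \geq 5$ secures absolute convergence of the singular integral. A local-global count then delivers the claimed lower bound of order $N^{s+t-2}$, with the dependence on $r$ absorbed into the tower-type constants supplied by the regularity lemma.

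The main obstacle, and the reason the paper invests in its mixed restriction estimates, is controlling the error terms in which at least one factor of $f^{(j)}_{\mathrm{unf}}$ appears. Expanding the count as a Fourier integral, one faces a product of quadratic exponential sums of shape $\sum_x f(x) e(a_i \alpha x^2)$ and linear exponential sums of shape $\sum_y g(y) e(-b_k \alpha y)$, and one must translate Gowers-norm smallness of the $f^{(j)}_{\mathrm{unf}}$ into smallness of this mixed multilinear integral. This is precisely where the mixed restriction estimates enter: they serve as the bridge between the qualitative regularity-lemma decomposition and the quantitative count, and they are where the hypothesis $s \geq 3$ is again crucial, since three or more quadratic factors are needed to invoke the relevant restriction inequality for the squares. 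Terms containing $f^{(j)}_{\mathrm{sml}}$ can then be absorbed by a separate Cauchy--Schwarz plus representation-number argument.
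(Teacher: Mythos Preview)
Your outline has two genuine gaps that the paper's argument is specifically designed to avoid.

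\textbf{The diagonal restriction loses solutions.} If you set $x_i = x$ for all $i \in I$, the resulting subvariety of $[N]^{s+t}$ has only $O(N^{s+t-|I|})$ integer points: one free parameter $x$, then $(s-|I|)+t$ remaining variables subject to one equation. This matches $N^{s+t-2}$ only when $|I|=2$, and there is no reason for a minimal Rado set to have size $2$ (e.g.\ $a_1=a_2=1$, $a_3=-2$ forces $|I|=3$). So even a perfect main-term analysis along the diagonal cannot produce the claimed count.

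\textbf{Abelian uniformity does not control the quadratic count.} The Fourier expansion of the count involves the quadratic exponential sums $\tilde f_i(\alpha)=\sum_x f_i(x)e(a_i\alpha x^2)$, and smallness of $\|\hat f_i\|_\infty$ says nothing about $\tilde f_i$. To control the error terms directly you would need $U^3$-type uniformity and a quadratic regularity lemma with nilsequence structured parts, none of which is developed here. The mixed restriction estimates in Lemma~\ref{mixed restriction} and the Fourier-control Lemma~\ref{fourier control} are stated for the \emph{linearised} operator $\sum_{L_1(n)=W L_2(y^2)+L_3(z)}\cdots$, where the would-be square variables enter linearly through the majorant $\nu$; they do not apply to the raw count in your decomposition.

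The paper proceeds instead by linearising. The $W$-trick and the dense-model lemma replace the indicator of a dense set of shifted squares by a function $f_i$ majorised by $1_{[X]}+\nu$ on the linear variable $n$, so that the (abelian) arithmetic regularity lemma of \S\ref{linear density section} applies after transference; this yields the asymmetric density--colouring Theorem~\ref{quadratic-linear density}. The passage from the asymmetric statement to the symmetric colouring conclusion is a separate ``cleaving'' step in \S\ref{colouring section}: one locates a threshold $\delta_n$ at which every colour class is either $\delta_n$-dense or $\delta_{n+1}$-sparse, applies Theorem~\ref{quadratic-linear density} with the dense classes playing the role of the $A_i$, and then invokes the $L^1$-control of Lemma~\ref{l1 control} to force the good colour class $C_j$ to itself be $\delta_n$-dense, so that one may legitimately set $A=C_j$.
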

All equations so far considered   have the form 
\begin{equation}\label{linear equals quadratic}
a_1x_1^2 + \dots + a_sx_s^2 = b_1y_1 + \dots + b_t y_t,
\end{equation}
where the $a_i$ and $b_j$ are non-zero integers. Another equation of this type, $x+y = z^2$, has received attention from Green--Lindqvist \cite{GreenLindqvistMonochromatic} and Pach \cite{PachMonochromatic}.  They demonstrate that $x+y = z^2$ has infinitely many monochromatic solutions in any 2-colouring, but that there is a 3-colouring with no monochromatic solutions beyond $(x,y,z) = (2,2,2)$.   With this in mind, it is natural to ask the following.
\begin{question}
Let $a_1,\dots, a_s, b_1,\dots, b_t \in \Z\setminus\set{0}$ with $s,t\geq 1$. When is the equation \eqref{linear equals quadratic} partition regular?
\end{question}
Ideally we would like an algebraic characterisation  comparable  to that of  \cite{RadoStudien} and \cite{CLPRado}, a criterion which can be easily checked by a computer. A necessary condition is provided in work of Di Nasso and Luperi Baglini \cite[Theorem 3.10]{DiNassoLuperiBagliniRamsey}.
\begin{proposition}[Di Nasso and Luperi Baglini]\label{necessary}
If the equation \eqref{linear equals quadratic} is partition regular, then there exists $I \neq \emptyset$ such that either $\sum_{i \in I} a_i = 0$ or $\sum_{i \in I } b_i = 0$.
\end{proposition}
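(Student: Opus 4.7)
The plan is to prove the contrapositive. Suppose that no nonempty $I \subseteq [s]$ satisfies $\sum_{i \in I} a_i = 0$ and no nonempty $J \subseteq [t]$ satisfies $\sum_{j \in J} b_j = 0$; I will construct a finite colouring of $\N$ whose colour classes each contain only finitely many solutions to \eqref{linear equals quadratic}, establishing the failure of partition regularity. The colouring combines two ingredients: a Rado-style $p$-adic leading-digit colouring (to rule out $p$-adic cancellation of the two sides of the equation) and a coarse double-logarithmic magnitude colouring (to obstruct the quadratic and linear sides from having matching size). Concretely, fix a prime $p$ larger than every nonzero partial sum $\bigabs{\sum_{i \in I} a_i}$ and $\bigabs{\sum_{j \in J} b_j}$; writing each $n \in \N$ as $n = p^{v(n)} m(n)$ with $\gcd(m(n),p) = 1$, colour $n \geq 16$ by
$$\chi(n) := \bigbrac{\,m(n) \bmod p,\ \floor{\log_2 \log_2 n} \bmod 3\,}$$
and assign distinct colours to the finitely many smaller $n$.

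For a monochromatic solution of colour $(\alpha, c)$, writing $x_i = p^{u_i} m_i$ and $y_j = p^{v_j} n_j$, Rado's $p$-adic argument applies: the leading $p$-adic digits $\alpha^2 \sum_{i \in I_{\min}} a_i$ and $\alpha \sum_{j \in J_{\min}} b_j$ (where $I_{\min},J_{\min}$ index the minimum-valuation terms) are nonzero modulo $p$ by our choice of $p$, whence
$$
v_p\Bigbrac{\sum_i a_i x_i^2} = 2\min_i u_i, \qquad v_p\Bigbrac{\sum_j b_j y_j} = \min_j v_j,
$$
with both sides of \eqref{linear equals quadratic} nonzero, and equating the valuations forcing $2\min_i u_i = \min_j v_j$. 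The next step is the magnitude estimate: I aim to show $\bigabs{\sum_i a_i x_i^2} \asymp X^2$ and $\bigabs{\sum_j b_j y_j} \asymp Y$, where $X = \max_i x_i$ and $Y = \max_j y_j$. Granted these, the equation yields $X^2 \asymp Y$; since $X$ lying in double-log band $k$ places $X^2$ in band $k+1$, while the colouring enforces $\ell \equiv k \pmod 3$ for the band $\ell$ of $Y$, we reach a contradiction with $\ell - k \in \set{1,2}$.

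The main obstacle is proving the magnitude estimates in the presence of potential cancellation among components lying in the same double-log band as the maximum. To address this I would further refine the colouring by recording, say, the leading base-$2$ digit within each double-log band, thereby reducing the non-cancellation to a Rado-type column statement for the $(a_i)$ and the $(b_j)$; this latter statement follows directly from the hypothesis that no nonempty partial sum of the $a_i$ or of the $b_j$ vanishes. Once the magnitude estimates are established, the only surviving monochromatic solutions involve components of bounded size, and are hence finite in number, completing the argument.
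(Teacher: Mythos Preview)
The paper does not prove this proposition itself; it is quoted as \cite[Theorem 3.10]{DiNassoLuperiBagliniRamsey}, so there is no in-paper argument to compare against.

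Your strategy is natural, but as written it has a real gap at exactly the step you flag. The colouring you specify does \emph{not} force $\bigl|\sum_i a_i x_i^2\bigr|\asymp X^2$. For the equation $2x_1^2 - x_2^2 = y$ (where $a=(2,-1)$ and $b=(1)$ have no vanishing nonempty partial sums) and $p=5$, the triple $(x_1,x_2,y)=(256,361,751)$ is monochromatic under your $\chi$ --- each entry has $m(n)\equiv 1\pmod 5$ and $\lfloor\log_2\log_2 n\rfloor=3$ --- and solves the equation; the analogous construction (take $x_2$ near $x_1\sqrt 2$ with matching residue) produces a monochromatic solution in every larger band, hence infinitely many. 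Your proposed refinement, ``the leading base-$2$ digit,'' is ill-posed (that digit is always $1$), and the assertion that non-cancellation ``follows directly from the hypothesis that no nonempty partial sum \dots\ vanishes'' conflates $p$-adic and archimedean control: Rado's condition pins down $v_p\bigl(\sum a_i x_i^2\bigr)$, not its size. The overall plan \emph{can} be repaired, but one must record substantially more archimedean data --- for instance the first two base-$q$ digits of $n$ together with $\lfloor\log_q n\rfloor\bmod K$ for suitably large fixed $q,K$. This forces monochromatic $x_i$ at the top $q$-scale $E$ to satisfy $x_i/x_{i'}=1+O(1/q)$, whence $\sum_{i\in J_E} a_i x_i^2 = X^2\bigl(\sum_{J_E} a_i + O(1/q)\bigr)$ with $\sum_{J_E}a_i\neq 0$ by hypothesis, and similarly for the linear side; only then does the double-log band comparison go through. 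Carrying this out (including the edge cases near band boundaries) is genuine work that your sketch does not supply.
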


We are able to show that this condition is sufficient in all but one case.
\begin{theorem}[Linear--quadratic partition regularity]\label{criteria} 
Let $a_1,\dots, a_s, b_1,\dots, b_t \in \Z\setminus\set{0}$ with $s,t\geq 1$.
Suppose that  \eqref{linear equals quadratic} does not take the form
\begin{equation}\label{bad case}
a(x_1^2 - x_2^2) = by^2 + cz
\end{equation}
for some non-zero integers $a,b,c$. 
Then \eqref{linear equals quadratic} is partition regular if and only if there exists $I \neq \emptyset$ such that $\sum_{i \in I} a_i = 0$ or $\sum_{i \in I} b_i = 0$.
\end{theorem}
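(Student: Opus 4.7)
The plan is to establish necessity by quoting Proposition~\ref{necessary}, then prove sufficiency by a case analysis keyed to which side of \eqref{linear equals quadratic} harbours a vanishing subsum.

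For sufficiency, suppose first that some non-empty $I$ satisfies $\sum_{i\in I}b_i=0$. Rewriting \eqref{linear equals quadratic} as
\[
b_1y_1+\dots+b_ty_t=a_1x_1^2+\dots+a_sx_s^2,
\]
this is exactly the setting of Theorem~\ref{linear form satisfies rado} (with the $b$'s playing the role of the linear coefficients and the $a$'s of the quadratic coefficients). That theorem yields $\gg_{r} N^{(|I|+s+t-2)/2^r}$ monochromatic solutions in any $r$-colouring of $[N]$, so \eqref{linear equals quadratic} is partition regular.

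Now suppose instead that some non-empty $I$ satisfies $\sum_{i\in I}a_i=0$, and (without loss of generality) the $b$'s admit no vanishing subsum. When $s\geq 3$ and $s+t\geq 5$, Theorem~\ref{quadratic form satisfies rado} applies directly and yields partition regularity. The remaining configurations are (i) $s=2$ with any $t\geq 1$ (forcing $a_1+a_2=0$), and (ii) $s=3$, $t=1$. For (ii), if the vanishing $a$-subsum has size two then, after relabelling, the equation reads $a(x_1^2-x_2^2)+a_3x_3^2=b_1y_1$, which is precisely the excluded shape \eqref{bad case}; so the subsum has size three, $a_1+a_2+a_3=0$, and specialising $x_3=x_1$ reduces to $a_2(x_2^2-x_1^2)=b_1y_1$, an instance of case (i) with $t=1$.

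In case (i) the equation becomes $a(x_1^2-x_2^2)=b_1y_1+\dots+b_ty_t$ with $a=a_1=-a_2$. Here neither counting theorem is directly available. I would argue by combinatorial reduction: exploiting the factorisation $x_1^2-x_2^2=(x_1-x_2)(x_1+x_2)$, restrict to monochromatic configurations of the shape $(x_2,x_2+h,y_1,\dots,y_t)$ with $h$ either a fixed suitable multiple of the $b_j$ or an auxiliary variable ranging over a Schur/Rado-type monochromatic structure. This recasts the problem as partition regularity of an (inhomogeneous) linear configuration whose coefficients are polynomials in $h$ and the $b_j$; one then verifies the relevant columns condition by exploiting the cancellation $a_1+a_2=0$ together with the freedom to choose $h$. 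The exclusion of the bad form \eqref{bad case} is precisely what guarantees enough free parameters remain so that this reduction succeeds.

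The main obstacle is case (i): neither Theorem~\ref{linear form satisfies rado} nor Theorem~\ref{quadratic form satisfies rado} applies directly, so one must convert the quadratic factorisation into a linear Rado-type structure without being able to invoke a vanishing $b$-subsum. Verifying that the resulting linear configuration is indeed partition regular — and that it can be lifted to a monochromatic solution of the original equation — is the delicate technical step, and explains why the bad form \eqref{bad case} must be singled out.
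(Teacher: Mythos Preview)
Your case analysis is correct up to the identification of the residual cases (i) $s=2$, $a_1+a_2=0$, and (ii) $s=3$, $t=1$. The gap is in case (i): the ``combinatorial reduction'' via the factorisation $x_1^2-x_2^2=(x_1-x_2)(x_1+x_2)$ is not carried out, and as described it cannot be made to work by Rado-type linear methods alone. After setting $x_1=x_2+h$ the equation becomes $2ah\,x_2 + ah^2 = b_1y_1+\dots+b_ty_t$, which is inhomogeneous in $(x_2,y_1,\dots,y_t)$ for fixed $h$ and, since you are assuming the $b$'s have no vanishing subsum, fails every linear partition-regularity criterion. Letting $h$ vary does not repair this without injecting a genuinely multiplicative structure. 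You also misattribute the role of the excluded form \eqref{bad case}: that exclusion removes only the $s=3$, $t=1$, $|I|=2$ subcase and has no bearing on case (i), which has $s=2$.

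The paper handles case (i) --- and case (ii) with $|I|=3$ directly, without your reduction --- by invoking Moreira's theorem on monochromatic configurations containing both $x+y$ and $xy$. The point is that in both residual cases the \emph{full} coefficient sum $a_1+\dots+a_s$ vanishes, and under this hypothesis one can parametrise solutions to \eqref{linear equals quadratic} by Moreira-type patterns (this is the content of Theorem~\ref{sum to zero}). This is a deep ergodic/combinatorial input that your sketch does not supply, and without it the sufficiency direction is incomplete.
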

This almost resolves \cite[Open Problem 1] {DiNassoLuperiBagliniRamsey} when the question is restricted to the family of Diophantine equations given by \eqref{linear equals quadratic}. Our lack of knowledge regarding \eqref{bad case} is an artefact of our methods. We believe that Di Nasso and Luperi Baglini's criterion is the correct characterisation.
\begin{conjecture}
For any non-zero integers $a,b,c$, the equation \eqref{bad case} is partition regular.
\end{conjecture}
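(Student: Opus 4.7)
The plan is to exploit the special factorisation $x_1^2 - x_2^2 = (x_1-x_2)(x_1+x_2)$ available in \eqref{bad case}. Substituting $u = (x_1+x_2)/2$ and $v = (x_1-x_2)/2$, so that $x_1 = u+v$ and $x_2 = u-v$, one rewrites \eqref{bad case} as
$$
4auv = by^2 + cz,
$$
where $u > v \geq 1$ range over positive integers of the same parity. The task therefore reduces to showing that, for any finite colouring of $\N$, there exist $u, v, y, z \in \N$ with $u+v, u-v, y, z$ all in a single colour class and satisfying this bilinear--quadratic--linear equation.

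The natural first step is to adapt the Fourier method underlying the proof of Theorem~\ref{quadratic form satisfies rado}. That theorem requires the total number of variables to be at least five, whereas \eqref{bad case} has only four; however, the associated exponential sum
$$
\sum_{u, v \in [N]} 1_{C}(u+v)\, 1_C(u-v)\, e(4a\alpha uv)
$$
factorises as a bilinear Weyl sum rather than a single quadratic Weyl sum, and should therefore admit additional cancellation via a mixed bilinear restriction inequality on the minor arcs. Combined with the arithmetic regularity lemma of Green, this extra structure may well compensate for the missing variable.

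The principal obstacle is that the linear variable $z$ confines the Fourier integral to major arcs of scale $O(1/(cN))$, a short scale on which the required minor-arc bounds become delicate. One possible remedy is to complement the Fourier input with a divisor-function viewpoint: the equation $4auv = by^2 + cz$ asks $u$ and $v$ to form a factorisation of $by^2 + cz$, so one may exploit typical divisor statistics of the right-hand side together with density estimates for squares in arithmetic progressions with small modulus.

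If a direct circle method attack proves too fragile, a softer alternative is to parametrise $u = \lambda p$, $v = \lambda q$, $y = \lambda w$ with $\lambda$ a scaling parameter, which forces $z = \lambda^2(4apq - bw^2)/c$ whenever $c$ divides $4apq - bw^2$. The problem then reduces to the partition regularity of a polynomial configuration of the form $\{\alpha\lambda, \beta\lambda, \gamma\lambda, \delta\lambda^2\}$ for suitable fixed integers $\alpha,\beta,\gamma,\delta$. Partition regularity of such mixed multiplicative--polynomial patterns should be accessible via an ergodic-theoretic argument in the spirit of Moreira's proof that $\{x, x+y, xy\}$ is partition regular.
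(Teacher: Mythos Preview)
The statement you are attempting to prove is presented in the paper as an open \emph{conjecture}, not a theorem; the paper offers no proof and indeed only establishes the special case $a=b=c=1$ \emph{conditionally} on Hindman's conjecture (Theorem~\ref{bad under hindman}). Your proposal is therefore not to be compared against a proof in the paper, and must be judged on its own merits. Unfortunately it is not a proof but a sequence of speculative strategies, each with a concrete gap.

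In your first approach, the bilinear rewriting buys nothing: the sum $\sum_{u,v} 1_C(u+v)1_C(u-v)e(4a\alpha uv)$ is identically equal to $\bigl|\tilde{1}_C(a\alpha)\bigr|^2$ (this is just the change of variables $x_1=u+v$, $x_2=u-v$ run backwards), so no additional cancellation is available beyond what the paper's quadratic restriction estimates already exploit. The obstruction $s+t\geq 5$ in Theorem~\ref{quadratic form satisfies rado} is genuine for this equation, and your reformulation does not circumvent it. The divisor-statistics remark is too vague to evaluate. Your third approach reduces the problem to the partition regularity of a configuration $\{\alpha\lambda,\beta\lambda,\gamma\lambda,\delta\lambda^2\}$ with $\alpha,\beta,\gamma,\delta$ \emph{fixed} and $\lambda$ variable; this lies outside the scope of Moreira's theorem, which produces monochromatic $\{x,x+y,xy\}$ for \emph{some} pair $(x,y)$, not for prescribed ratios. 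Indeed, the paper's own Theorem~\ref{bad under hindman} shows that even the simplest instance of your target equation is at present only accessible via the unresolved Hindman conjecture, so a reduction to a pattern of comparable difficulty is not progress.
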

As evidence towards this conjecture, we prove that a special case of \eqref{bad case} is partition regular conditional on the following notorious problem of Hindman.
\begin{conjecture}[Hindman]\label{hindman}
In any finite colouring of $\N$ there is a monochromatic configuration of the form $\set{x, y, x+y, xy}$.
\end{conjecture}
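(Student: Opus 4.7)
The plan is to attack Hindman's conjecture via the algebra of the Stone--\v{C}ech compactification $\beta\N$, the standard framework for monochromatic additive problems. First I would try to locate an ultrafilter $p \in \beta\N$ that is idempotent with respect to both addition and multiplication, i.e.\ $p+p = p = p\cdot p$. Given such a $p$, a standard argument yields a colour class $A$ with $A \in p$, and the additive/multiplicative idempotencies would then force many pairs $(x,y) \in A \times A$ to satisfy $x+y \in A$ and $xy \in A$ respectively. Extracting a pair that simultaneously satisfies both would produce the desired monochromatic configuration $\set{x,y,x+y,xy}$.

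The principal obstacle --- and the reason this conjecture has resisted attack for decades --- is that no such doubly-idempotent ultrafilter is known to exist, and its existence is itself a famous open problem in the algebra of $\beta\N$. Partial progress by Bergelson and Hindman produces $x+y$ and $xy$ in a common colour by cleverly alternating between additive and multiplicative ultrafilters, while Moreira's theorem produces $\set{x,\, x+y,\, xy}$ monochromatic via a subtle topological-dynamical increment argument. Both of these techniques stop short of controlling the colour of $y$ itself, which is precisely the element that is hardest to pin down because it plays a dual role as additive shift and multiplicative dilation.

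As a fallback I would pursue a density-increment strategy: iterate Hindman's additive theorem inside each colour class to produce progressively richer sub-IP sets, then invoke multiplicative recurrence on the residual structure to recover the product. Making this work requires a Furstenberg-type correspondence principle that tracks the additive and multiplicative operations simultaneously for a common shift parameter $y$, and no such correspondence is presently available; the relevant recurrences appear to live in incompatible characteristic factors. I would therefore view a proof of the conjecture as requiring a genuinely new idea --- perhaps a sum--product-style increment, or a bi-algebraic refinement of Moreira's argument --- well beyond the Fourier-analytic and restriction-theoretic machinery developed in the present paper.
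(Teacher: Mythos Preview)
The statement you were asked to prove is a \emph{conjecture}, not a theorem: it is stated in the paper as Conjecture~\ref{hindman} (Hindman's conjecture) and is explicitly used only as a \emph{hypothesis} for the conditional result Theorem~\ref{bad under hindman}. The paper makes no attempt to prove it, so there is no ``paper's own proof'' against which to compare your proposal.

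Your write-up correctly identifies that this is a long-standing open problem, accurately summarises the state of the art (the Bergelson--Hindman partial result, Moreira's $\{x,\,x+y,\,xy\}$ theorem, the elusive doubly-idempotent ultrafilter), and rightly concludes that the Fourier-analytic machinery of the present paper is not designed to resolve it. That assessment is sound; no further comparison is possible or needed.
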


\begin{theorem}\label{bad under hindman}
If Hindman's conjecture is true, then the equation 
\begin{equation}\label{bad special case}
x_1^2 - x_2^2 = y^2 + z
\end{equation}
is partition regular.
\end{theorem}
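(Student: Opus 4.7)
The approach reduces the problem, via an elementary algebraic identity, to proving partition regularity of the configuration $\{X,Y,X+Y,2XY\}$, which we extract from Hindman's conjecture by applying it to a suitably enhanced colouring. The key identity is
\[
(X+Y)^2 - X^2 = Y^2 + 2XY,
\]
so any monochromatic quadruple $\{X,Y,X+Y,2XY\}$ immediately yields the monochromatic solution $(x_1,x_2,y,z) = (X+Y,X,Y,2XY)$ of $x_1^2 - x_2^2 = y^2 + z$.

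Given an $r$-colouring $\chi\colon \N \to \{1,\dots,r\}$, define an auxiliary $(r+1)$-colouring $\chi'\colon \N \to \{1,\dots,r,*\}$ by $\chi'(n) := \chi(n/2)$ for even $n$ and $\chi'(n) := *$ for odd $n$. Assuming Conjecture \ref{hindman}, apply Hindman's conjecture to $\chi'$ to obtain $u,v \in \N$ with $\{u,v,u+v,uv\}$ monochromatic under $\chi'$. The common $\chi'$-colour cannot be $*$: otherwise $u$ and $v$ would both be odd, forcing $u+v$ to be even and hence $\chi'(u+v) \in \{1,\dots,r\}$, a contradiction. Therefore $u,v,u+v,uv$ are all even, and there exists $c \in \{1,\dots,r\}$ with $\chi(u/2) = \chi(v/2) = \chi((u+v)/2) = \chi(uv/2) = c$.

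Set $X := u/2$ and $Y := v/2$. Since $(u+v)/2 = X+Y$ and $uv/2 = 2XY$, the quadruple $\{X, Y, X+Y, 2XY\}$ is monochromatic of colour $c$. The identity above then provides the monochromatic solution $(x_1,x_2,y,z) = (X+Y, X, Y, 2XY)$ in the colour class $C_c$. To obtain infinitely many monochromatic solutions as required by the definition of partition regularity, one applies the same argument to the colouring restricted to arbitrarily large tails $[N, \infty)$, which still satisfies Hindman's conjecture; this yields monochromatic solutions with $X, Y$ as large as desired.

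The main obstacle, and the crucial ingredient, is Step 1: one must design the enhanced colouring $\chi'$ so that Hindman's conjecture produces a configuration containing $2XY$ rather than $XY$. The trick of dividing by $2$ (and enforcing parity via the distinguished colour $*$) is precisely what aligns the output of Hindman's conjecture with the algebraic identity $(X+Y)^2 - X^2 - Y^2 = 2XY$. Once this alignment is achieved, the remainder of the argument is purely algebraic.
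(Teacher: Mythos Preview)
Your proof is correct and follows essentially the same approach as the paper: both define the auxiliary colouring that sends even $n$ to $\chi(n/2)$ and all odd $n$ to a single new colour, apply Hindman's conjecture, use parity to force the configuration to be even, and then invoke the identity $(X+Y)^2 - X^2 = Y^2 + 2XY$ (equivalently, $\bigl(\tfrac{x+y}{2}\bigr)^2 - \bigl(\tfrac{x}{2}\bigr)^2 = \bigl(\tfrac{y}{2}\bigr)^2 + \tfrac{xy}{2}$). The only minor difference is how infinitude is justified: the paper recolours the $x$-values of previously found tuples as singletons, whereas you restrict to tails $[N,\infty)$; both reduce to the observation that singleton colour classes cannot contain a Hindman configuration, so your version is fine once that remark is made explicit.
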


There are too few variables for our methods to handle \eqref{bad case}. Although the number of variables is more than sufficient for the circle method to count solutions to \eqref{bad case} with $x_1, x_2, y \in [N]$ and $z \in [N^2]$, the possibility of colourings such as \eqref{bad colouring} means that we are driven to counting solutions with all variables (including $z$) constrained to the interval $[N]$. Standard circle method heuristics indicate that such a count is at the threshold of `square-root cancellation', and necessitates a more refined analysis, such as that given by Kloosterman \cite{KloostermanRepresentation} or Heath--Brown \cite{HeathBrownNew}. Incorporating combinatorial methods within such an analysis remains a challenging problem.

\subsection{Higher degree equations}

The methods of this paper also yield counting results for higher degree diagonal equations.  For instance, one can adapt our argument to show that in any $r$-colouring of $[N]$ the number of monochromatic solutions to  \eqref{diagonal quadric} is $\gg_{r,k, a_i} N^{s-k}$, provided that $s$ is sufficiently large in terms of $k$. Hence, just as with diagonal quadrics, the number of monochromatic solutions  is a positive proportion of the total number of solutions in an interval. 

Similarly, one can use our methods to classify when inhomogeneous equations involving diagonal forms of other degrees are partition regular, provided there are sufficiently many variables. We have focused on diagonal linear/quadratic  forms for  simplicity.

\subsection{Mixed restriction estimates}

The main tools used in proving our results are the Hardy--Littlewood circle method, the abelian arithmetic regularity lemma and the Fourier analytic transference principle. All three of these tools are part of discrete harmonic analysis, and key to their success are so-called \emph{discrete restriction estimates}\footnote{See the introduction to \cite{HenriotHughesRestriction} for motivation and history.}. 

Colourings such as \eqref{bad colouring}, when combined with the inhomogeneity of the equation \eqref{linear equals quadratic}, force us to count solutions to equations in certain `skewed' regions, where some variables are constrained to much smaller intervals than is typical in the circle method. This necessitates the development of some novel `mixed' restriction estimates (see Lemma \ref{mixed restriction}), such as the following.
\begin{theorem}[Mixed restriction]
Let $W$ be a positive integer and $p > 2$. Then either $N \ll_p W^{O_p(1)}$ or, for any $f, g : \Z \to \C$ with $|f|, |g| \leq 1_{[N]}$ we have 
\begin{equation}\label{baby mixed restriction}
  \int_{\T} \abs{\sum_{N/2 < x\leq N} f(x) e(W\alpha x^2)\sum_{N/2 < y\leq N} g(y) e(\alpha y)}^p \intd\alpha \\
  \ll_{p} N^{2p - 2} W^{-1}.
  \end{equation}
\end{theorem}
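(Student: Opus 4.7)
The plan is to combine Plancherel with a periodization identity in order to capture the $W^{-1}$ saving, then invoke Hua-type mean value bounds for the pure-square sum $F_0(\beta) := \sum_{N/2 < x \leq N} f(x) e(\beta x^2)$.

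Since $|G(\alpha)|^{p-2} \leq N^{p-2}$ for $p \geq 2$, it suffices to establish $\int_\T |F|^p |G|^2 \intd\alpha \ll N^p/W$. The crucial observation is that $|F(\alpha)|^p = |F_0(W\alpha)|^p$ depends on $\alpha$ only modulo $1/W$, so decomposing $[0,1) = \bigsqcup_{k=0}^{W-1} [k/W,(k+1)/W)$ and changing variables $\beta = W\alpha - k$ shows that $\widehat{|F|^p}(n) = \hat H(n/W)\,\mathbf{1}_{W \mid n}$, where $H := |F_0|^p$. Expanding $|G|^2 = \sum_n c(n) e(\alpha n)$ with $|c(n)| \leq \max(0, N/2 - |n|)$, Parseval yields
\begin{equation*}
\int_\T |F|^p |G|^2 \intd\alpha = \sum_{|m| \leq N/(2W)} c(Wm)\, \hat H(-m),
\end{equation*}
a sum with only $O(N/W)$ nonzero terms---precisely the source of the $W^{-1}$ saving.

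For $p > 4$, bounding $|\hat H(m)| \leq \|H\|_1 = \int |F_0|^p \ll N^{p-2}$ via Hua's inequality gives $\int |F|^p |G|^2 \ll N^p/W$ and hence $\int |FG|^p \ll N^{2p-2}/W$. For all $p \geq 4$ the estimate then extends by H\"older interpolation $\|FG\|_p^p \leq \|FG\|_\infty^{p-4} \|FG\|_4^4 \leq N^{2(p-4)} \cdot N^6/W$ against the trivial bound $\|FG\|_\infty \leq N^2$.

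For $p \in (2, 4)$ the Hua bound deteriorates to $\int |F_0|^p \ll N^{p/2}$, which is insufficient, and a direct log-convex interpolation between $\|FG\|_2^2 \ll N^2$ and $\|FG\|_4^4 \ll N^6/W$ delivers only the weaker estimate $\int |FG|^p \ll N^{2p-2} W^{-(p-2)/2}$. To recover the full $W^{-1}$, I would instead perform a Hardy--Littlewood major/minor arc decomposition relative to the Dirichlet approximation of $W\alpha$: the principal arc near $W\alpha = 0$ produces the main term $\ll N^{2p-2}/W$ via stationary phase (using $|F_0(\beta)| \ll \min(N, 1/(|\beta|N))$ and $|G(\alpha)| \ll \min(N, 1/|\alpha|)$); secondary arcs near $W\alpha \approx a/q$ with $q \geq 2$ are controlled through the Gauss sum bound $|S(a,q)| \ll \sqrt{q}$; and minor arcs are handled by Weyl's inequality $|F(\alpha)| \ll N^{1-1/4+\eps}$. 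The main obstacle is that the minor-arc contribution via Weyl is of order $\ll N^{(3/2+\eps)p - 1}$, which falls below $N^{2p-2}/W$ only when $W$ is smaller than a certain power of $N$ depending on $p$; this restriction on $W$ is precisely the content of the dichotomy $N \ll_p W^{O_p(1)}$.
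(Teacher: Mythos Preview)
Your argument for $p > 4$ is correct and genuinely different from the paper's. The periodization identity --- that $|F_0(W\alpha)|^p$ has Fourier support on $W\Z$, while $|G|^2$ has Fourier support in $[-N/2,N/2]$ --- is an elegant way to extract the $W^{-1}$ saving, and combining it with Bourgain's restriction estimate $\int_\T |F_0|^p \ll_p N^{p-2}$ (this is what you need for arbitrary $|f|\leq 1$, not Hua) closes the argument. Your interpolation step ``for all $p \geq 4$'' is circular, since it presupposes the $p=4$ endpoint you have not established, but this is harmless: the direct argument already covers every $p>4$.

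The genuine gap is in the range $2 < p \leq 4$. Your major/minor arc decomposition relies on pointwise bounds such as $|G(\alpha)| \ll \min(N,\|\alpha\|_\T^{-1})$, Weyl's inequality $|F(\alpha)| \ll N^{3/4+\eps}$ on minor arcs, and stationary-phase estimates for $F_0$ on major arcs. None of these hold for arbitrary weights: an adversary can choose $g$ so that $|G|$ is of size $N$ at any prescribed frequency, and Weyl's bound for $F$ fails for the same reason. The circle method gives pointwise control only of unweighted sums, so as written this approach cannot close for general $f,g$.

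The paper handles all $p>2$ uniformly by transferring the circle-method input to the \emph{unweighted} majorant $\nu(n) := \sum_{Wx^2+y=n} 1_{(N/2,N]}(x)1_{(N/2,N]}(y)$, for which major and minor arc bounds on $\hat\nu$ are legitimately available (Lemmas \ref{major hyp veri}--\ref{hua hyp veri}), and then passing to arbitrary $|\phi|\leq 1$ via Bourgain's distributional $TT^*$ argument (Theorem \ref{abstract restriction}). The key step is the Tomas--Stein-type inequality
\[
\delta^2\|\nu\|_1\,\meas(E_\delta)^2 \leq \int_{E_\delta}\int_{E_\delta} |\hat\nu(\alpha_1-\alpha_2)|\,\intd\alpha_1\intd\alpha_2,
\]
which reduces the weighted level-set bound to pointwise control of $\hat\nu$ alone. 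This is the mechanism that lets circle-method estimates interact with arbitrary weights, and it is what your argument is missing for $2<p\leq 4$.
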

We note that 
\begin{equation*}
  \int_{\T} \abs{\sum_{N/2 < x\leq N} f(x) e(W\alpha x^2)}^{2p} \intd\alpha \\
  \ll_{p} N^{2p - 2} 
  \end{equation*}
  and
\begin{equation*}
  \int_{\T} \abs{\sum_{N/2 < y\leq N} g(y) e(\alpha y)}^{2p} \intd\alpha \\
  \ll_{p} N^{2p - 1} .
  \end{equation*}
Hence the obvious application of the Cauchy--Schwarz inequality does not deliver a bound as strong as \eqref{baby mixed restriction}.

In addition to \eqref{baby mixed restriction}, we require three further mixed restriction estimates, and to prove all four simultaneously we abstract an approach of Bourgain \cite{BourgainLambda}. Hence, in \S\ref{abstract restriction section} we prove a general restriction estimate for exponential sums obeying certain hypotheses and in \S\ref{Fourier Pseudorandomness} we verify that each of our four mixed exponential sums satisfy these hypotheses.

\subsection{The utility of counting results}
 In the study of partition regularity it is often desirable to delineate between `trivial' and `non-trivial' solutions to an equation, as some equations possess monochromatic solutions for uninteresting reasons. For instance $x+y = z^2$  has the solution $(2,2,2)$, whilst $x + y = 2z$ is always solved by the diagonal $(x,x,x)$.  One commonly encountered choice of non-triviality is a solution in which all variables are distinct, but the precise notion may depend on the application. A counting result allows one to ensure the existence of monochromatic solutions avoiding \emph{any} sparse subset of solutions. This implies that there are monochromatic solutions of `generic type', i.e.\ not lying on a proper Zariski closed subset. For if all monochromatic solutions took this form then counting arguments would likely give a power saving in the number of monochromatic solutions when compared with the total number of solutions.

Frankl, Graham and R\"odl \cite{FGRQuantitative} pioneered the counting of monochromatic solutions to systems of \emph{linear} equations, obtaining lower bounds of the correct order of magnitude for all such partition regular systems. 
The non-linear theory is much less developed, mainly due to our lack of knowledge regarding when such equations are partition regular. The author hopes this paper encourages the development of further non-linear counting results.

\subsection{Organisation of this paper}

We sketch some of the ideas behind our methods in \S\ref{sketch section}. In \S\ref{linear density section} we use the arithmetic regularity lemma to prove that dense sets of integers contain certain polynomial configurations, from which all of our counting results are ultimately derived. We derive Theorem \ref{linear form satisfies rado} from the results of \S\ref{linear density section} in \S\ref{linear colouring subsection}.  

We devote \S\ref{abstract restriction section}--\ref{quadratic density sec} to modifying the results of \S\ref{linear density section} to apply to dense sets of \emph{squares}, instead of just dense sets of integers. In \S\ref{abstract restriction section} we generalise an approach of Bourgain \cite{BourgainLambda} to prove a general restriction estimate for exponential sums obeying certain hypotheses and in \S\ref{Fourier Pseudorandomness} we verify these hypotheses for the exponential sums of relevance. In \S\ref{control section} we use these restriction estimates to show how the Fourier transform of a set completely determines the number of solutions it contains to the equations we are interested in.

All of our counting results are derived from density results in \S\ref{colouring section}. Finally in \S\ref{moreira lindqvist} we adapt an argument of Moreira to establish partition regularity of equations of the form \eqref{linear equals quadratic} which are not covered by our counting theorems. This allows us to combine all previous results to deduce our partition regularity criteria (Theorem \ref{criteria}).

\subsection{Notation}\label{notation}

\subsubsection*{Standard conventions}
We use $\N$ to denote the positive integers.  For a real number $X \geq 1$, write $[X] = \{ 1,2, \ldots, \floor{X}\}$.  A complex-valued function is said to be \emph{1-bounded} if the modulus of the function does not exceed 1. 

We use counting measure on $\Z$, so that for $f,g :\Z \to \C$, we have
$$
\norm{f}_{L^p} := \biggbrac{\sum_x |f(x)|^p}^{\recip{p}}, \ \ang{f,g} := \sum_x f(x)\overline{g(x)},\ \text{and}\ (f*g)(x) := \sum_y f(y)g(x-y).
$$ 
Any sum of the form $\sum_x$ is to be interpreted as a sum over $\Z$. The \emph{support} of $f$ is the set $\supp(f) := \set{x \in \Z : f(x) \neq 0}$. We write $\norm{f}_\infty$ for $\sup_{x} |f(x)|$.

For a finite set $S$ and function $f:S\to\C$, denote the average of $f$ over $S$ by
\[
\E_{s\in S}f(s):=\frac{1}{|S|}\sum_{s\in S}f(s).
\]
 
We use Haar probability measure on $\T := \R/\Z$, so that for integrable $F, G : \T \to \C$, we have
\begin{multline*}
\norm{F}_{L^p} := \biggbrac{\int_\T |F(\alpha)|^pd\alpha}^{\recip{p}} = \biggbrac{\int_0^1 |F(\alpha)|^pd\alpha}^{\recip{p}}, \\
 \ang{F,G} := \int_\T F(\alpha)\overline{G(\alpha)}\intd\alpha,\quad \text{and} \quad (F*G)(\alpha) := \int_\T F(\alpha - \beta)G(\beta) \intd\beta.
\end{multline*}
We write $\norm{\alpha}_\T$ for the distance from $\alpha \in \R$ to the nearest integer
$
\min_{n \in \Z} |\alpha - n|.
$
This remains well-defined on $\T$.

\begin{definition}[Fourier transform]\label{fourier transform}
For $f : \Z^d \to \C$ with finite support define $\hat{f} : \T^d \to \C$ by
$$
\hat{f}(\alpha) := \sum_{n\in \Z^d} f(n) e(\alpha \cdot n).
$$
Here $e(\beta)$ stands for $e^{2\pi i \beta}$. We sometimes write $e_q(a)$ for $e(a/q)$.

Given integrable $F : \T^d \to \C$ write
\[
\hat{F}(n) := \int_{\T^d} F(\alpha) e(-\alpha \cdot n) \intd\alpha.
\]
\end{definition}

\begin{definition}[Smooth/rough numbers]
We say that an integer $n$ is $w$-smooth if all of its prime divisors are at most $w$.  We say $n$ is $w$-rough if all of its prime divisors are at least $w$.
\end{definition}

\subsubsection*{Asymptotic notation}
For a complex-valued function $f$ and positive-valued function $g$, write $f \ll g$ or $f = O(g)$ if there exists a constant $C$ such that $|f(x)| \le C g(x)$ for all $x$. We write $f = \Omega(g)$ if $f \gg g$.  The notation $f\asymp g$ means that $f\ll g$ and $f\gg g$.  We write $f = o(g)$ if for any $\eps > 0$ there exists $X\in \R$ such that for all $x \geq X$ we have $|f(x)| \leq \eps g(x)$. 
 
Subscripts are added to this notation to indicate when the implicit constant/function depends on other parameters. For instance, the statement of Theorem \ref{bergelson lower bound} can be phrased as: For any positive integer $r$ there exists a constant $c_r >0$ and a function $\tau_r : \Z_{>0} \to [0,\infty)$ with $\tau_r(N) \to 0$ as $N\to \infty$ such that the following holds. For any $r$-colouring $C_1\cup \dots \cup C_r = \set{1,\dots, N}$ there exists $C_j$ such that
$$
\sum_{x-y = z^2} 1_{C_j}(x)1_{C_j}(y)1_{C_j}(z)\geq c_r N^{3/2^r}(1-\tau_r(N)).
$$
\subsubsection*{Local conventions}
The following are idiosyncratic to this paper, and may not be adhered to elsewhere in the literature. 
\begin{definition}[Quadratic Fourier transform]\label{quadratic fourier transform}  Given $f : \Z \to \C$ with finite support, define the \emph{quadratic Fourier transform} by
$$
\tilde{f}(\alpha) := \sum_{x } f(x) e(\alpha x^2).
$$ 
\end{definition}

\begin{definition}[Non-singular linear form]\label{linear form}
Let $c_1, \dots, c_s \in \Z$.  We call a polynomial of the form
$$
L(x_1, \dots, x_s) = c_1x_1 + \dots + c_s x_s
$$ 
a \emph{linear form}.  We say the linear form is \emph{non-singular} if $c_i \neq 0$ for all $i$. If $x = (x_1, \dots, x_s) \in \Z^s$, then it will be convenient to use the shorthand
$$
L(x^2) := L(x_1^2, \dots, x_s^2).
$$
\end{definition}

\begin{remark}[Dependence of implicit constants on linear forms]
A number of results in the remainder of the paper concern three non-singular linear forms $L_1, L_2, L_3$. Throughout we suppress dependence of implicit constants on the number of variables and the coefficients of the $L_i$.  One may think of all data associated to the $L_i$ as being $O(1)$.
\end{remark}

\section{A sketch of our methods}\label{sketch section}

As with the author's previous two papers on partition regularity \cite{CLPRado, ChapmanPrendivilleRamsey}, we first exhibit the method underlying our results with a proof of Schur's theorem.
\subsection{The regularity approach to Schur's theorem}
\begin{theorem}[Schur]\label{schur}
For any $r$-colouring $C_1\cup \dots \cup C_r = \{1,2,\dots, N\}$ there exists a colour class $C_j$ and $x,y,z \in C_j$ such that $x+y = z$, provided that $N$ is sufficiently large in terms of $r$.
\end{theorem}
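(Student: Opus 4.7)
The plan is to prove Schur's theorem by a Fourier-analytic density increment, the simplest manifestation of the regularity method that drives our main counting theorems. By pigeonhole some colour class $C$ has $|C| \geq N/r$, so it suffices to establish the following density dichotomy: for $A \subseteq [N]$ with density $\delta := |A|/N$, either $A$ contains a Schur triple $x + y = z$, or there is an arithmetic progression $P \subseteq [N]$ of length $|P| \gg N^c$ on which $|A \cap P|/|P| \geq (1 + c')\delta$ (with $c, c' > 0$ absolute).

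To prove the dichotomy, embed $[N] \hookrightarrow \Z/p\Z$ for a prime $p \in (2N, 4N]$. Since $x + y < p$ whenever $x, y \in [N]$, Schur triples in $A$ correspond precisely to solutions of $x + y \equiv z \pmod{p}$ with $x, y, z \in A$. Fourier inversion on $\Z/p\Z$ then yields
\[
T(A) = \frac{|A|^3}{p} + \frac{1}{p}\sum_{\xi \neq 0}\widehat{1_A}(\xi)^2\,\overline{\widehat{1_A}(\xi)},
\]
and by Parseval the error term is bounded by $\sup_{\xi \neq 0}|\widehat{1_A}(\xi)| \cdot |A|$. If this supremum does not exceed $|A|^2/(2p)$ then $T(A) \geq |A|^3/(2p) \gg \delta^3 N^2$, and we have located not merely one but the expected order of magnitude of Schur triples; otherwise some non-trivial linear phase correlates with $1_A$, and a standard Dirichlet approximation argument converts this correlation into the claimed long sub-progression.

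The iteration: restricting $A$ to a sub-progression $P = \{a + dk : k \in [N']\}$ converts Schur triples in $A \cap P$ into solutions of the shifted equation $k_3 = k_1 + k_2 + m$ in the pull-back $B \subseteq [N']$, where $m := a/d$. The shift contributes only a phase $e(-m\xi/p')$ to the Fourier count, which affects neither the main term nor the supremum bound, so the same dichotomy applies at every stage. Apply the dichotomy repeatedly to $C$; after $O(\log r)$ iterations its relative density on a sub-progression of length $\geq N^{r^{-O(1)}}$ exceeds $\tfrac{3}{4}$, from which a direct inclusion-exclusion argument produces a (shifted) Schur triple. I expect the principal obstacle to lie in verifying the density-increment step in the shifted setting and tracking the dependence of the sub-progression length on $r$, so that the iteration completes before the ambient interval falls below the threshold required by the Fourier count.
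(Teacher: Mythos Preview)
Your density-increment plan has a fundamental gap: the Schur equation $x+y=z$ is \emph{not} translation-invariant, so the iteration step does not close. When you pass to a sub-progression $P=\{a+dk:k\in[N']\}$, the condition $x+y=z$ with $x,y,z\in P$ becomes $a=d(k_3-k_1-k_2)$. Unless $d\mid a$ there are \emph{no} Schur triples in $P$ whatsoever, and the standard density increment does not guarantee $d\mid a$. Even when $d\mid a$, your shift $m=a/d$ may be as large as $N'$, in which case the equation $k_3=k_1+k_2+m$ has no solutions with $k_1,k_2,k_3\in[N']$ regardless of the density of $B$; so the terminal ``density exceeds $3/4$, hence inclusion--exclusion gives a solution'' step can fail. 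Concretely, $A=(N/2,N]$ has density $1/2$, no Schur triples, and sits as a progression of common difference $1$---your iteration would simply confirm density $1$ on $A$ itself and then stall. This is exactly the difference between Schur and Roth: $x-2y+z=0$ survives restriction to progressions, $x+y=z$ does not.

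The paper's sketch avoids this by never restricting to a sub-progression. Instead it uses the arithmetic regularity lemma to find a Bohr set $B$ under which every colour class is approximately shift-invariant, then pigeonholes to find $C_j$ with $|C_j\cap B|\geq|B|/r$. This gives, for \emph{every} $C_i$, many triples $(x,y,x+y)$ with $x\in C_i$, $y\in C_j$, $x+y\in C_i$. The remaining difficulty---that $C_j$ might itself be sparse in $[N]$---is handled by a separate ``cleaving'' argument which locates a threshold $M=O_r(1)$ so that every colour class is either $1/M$-dense or far sparser than $1/\mathcal{F}(M)$; one then checks that the $C_j$ produced above cannot be that sparse. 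The point is that density alone is genuinely insufficient for Schur, and the paper's argument uses the full partition structure rather than a single dense class.
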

We sketch a proof of this using the Fourier-analytic regularity lemma (Lemma \ref{arithmetic regularity}) originating in \cite{GreenSzemeredi}.
The take-away of the regularity lemma is that we can find a Bohr set
\begin{equation}\label{bohr set}
B := \set{ x\in [N] : \norm{\alpha_i x}_\T \leq \eta \text{ for } i = 1, \dots, d}
\end{equation}
such that each colour class $C_j$ is approximately invariant under shifts by $B$, so that for $y \in B$ we have
\begin{equation}\label{approximate invariance}
1_{C_j}(x+y) \approx 1_{C_j}(x).
\end{equation}
We have been deliberately vague about the nature of the approximation in \eqref{approximate invariance}.  There is an important trade-off to keep in mind: the closer one wishes the approximation \eqref{approximate invariance}, the smaller the resulting Bohr set \eqref{bohr set}.  The nature of the approximation \eqref{approximate invariance} allows us to conclude that for any colour classes $C_i$ and $C_j$ we have
\begin{equation}\label{invariant count}
\sum_{x \in [N]} \sum_{y \in B} 1_{C_i}(x)1_{C_j}(y) 1_{C_i}(x+y) \approx \sum_{x \in [N]}1_{C_i}(x)^2 \sum_{y \in B} 1_{C_j}(y).
\end{equation}
Using Cauchy--Schwarz the right-hand side of \eqref{invariant count} is at least
\[
N^{-1}|C_i|^2|C_j \cap B|.
\]
By the pigeon-hole principle there exists a colour class $C_j$ with $|C_j\cap B| \geq |B|/r$ and hence for all $i$ we have
\begin{equation}\label{naive lower bound}
\sum_{x \in [N]}1_{C_i}(x)^2 \sum_{y \in B} 1_{C_j}(y) \geq  \frac{|C_i|^2 |B|}{rN} .
\end{equation}
The obvious strategy is to now take $i := j$ in \eqref{invariant count} and \eqref{naive lower bound}, to yield
\begin{equation}\label{naive approach}
\sum_{x \in [N]} \sum_{y \in B} 1_{C_j}(x)1_{C_j}(y) 1_{C_j}(x+y) \approx \sum_{x \in [N]}1_{C_j}(x)^2 \sum_{y \in B} 1_{C_j}(y)\geq  \frac{|C_j|^2 |B|}{rN}.
\end{equation}
The drawback with this approach is that the error term in \eqref{invariant count} is of the form $\eps N |B|$. Hence in order to use \eqref{naive approach} to deduce the existence of a monochromatic solution to $x+y = z$, we need the lower bound in \eqref{naive approach} to be of order $N|B|$. This may not happen: imagine the situation in which the colour class $C_j$ is equal to the Bohr set $B$ (for the purposes of this sketch, $|B|$ should be thought of as $o(N)$). The problem we have encountered is that the colour class $C_j$ which is good for the regularity lemma (as it has large intersection with the Bohr set $B$) may not be a dense colour class (which we need for the lower bound in \eqref{naive lower bound} to be useful).

Our solution to this problem is twofold. By adapting the regularity argument outlined above, we first prove an asymmetric  version of Schur's theorem.
\begin{theorem}[Asymmetric Schur]\label{asymmetric schur}
Let $\delta > 0 $ and $A_1, \dots, A_s \subset [N]$ each with $|A_i| \geq \delta N$. Then for any colouring $[N] = C_1 \cup \dots \cup C_r$ there exists a colour class $C_j$ such that for any $A_i$ we have
\[
\sum_{x+y =z} 1_{A_i}(x)1_{C_j}(y)1_{A_i}(z) \gg_{\delta, r, s} N^2(1-o_{\delta, r, s}(1)).
\]
\end{theorem}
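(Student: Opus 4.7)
The approach is to adapt the regularity argument sketched above for symmetric Schur.  First I would apply the Fourier-analytic regularity lemma (Lemma \ref{arithmetic regularity}) simultaneously to $1_{A_1},\dots,1_{A_s}$, obtaining a common Bohr set $B$ of rank and radius depending on $\delta,r,s$, together with decompositions
\[
1_{A_i} = f_i^{\mathrm{str}} + f_i^{\mathrm{sml}} + f_i^{\mathrm{unf}},
\]
in which $f_i^{\mathrm{str}}$ is constant on translates of $B$, $\littlenorm{f_i^{\mathrm{sml}}}_{L^2}$ is small, and $\littlenorm{\widehat{f_i^{\mathrm{unf}}}}_\infty$ is small.

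I would then rewrite the count as $\sum_y R_{A_i}(y) 1_{C_j}(y)$, where $R_{A_i}(y) := \bigabs{A_i\cap(A_i - y)}$.  Under the regularity approximation $1_{A_i}\approx f_i^{\mathrm{str}}$, the difference count $R_{A_i}$ becomes approximately constant on each Bohr cell $y_k + B$ of a fixed partition of $[N]$, with cell-values $\rho_i(k)\geq 0$ satisfying $\sum_k \rho_i(k)\abs{y_k+B} \approx |A_i|^2 \geq \delta^2 N^2$.  A Plancherel-type computation, exactly as in the symmetric Schur sketch, absorbs the contributions from $f_i^{\mathrm{sml}}$ and $f_i^{\mathrm{unf}}$ into the factor $(1-o_{\delta,r,s}(1))$.

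It then remains to extract a single colour class $C_j$ satisfying $\sum_k \rho_i(k)\abs{C_j\cap(y_k+B)} \gg_{\delta,r,s} N^2$ for every $i$.  For a fixed $i$ an averaging over $j$ gives one colour class with lower bound $\gg \delta^2 N^2/r$, but the main obstacle is producing a \emph{common} $C_j$ that is good for all $s$ sets simultaneously.  I would address this by an iterated pigeonhole, at each stage restricting to the subfamily of Bohr cells on which the current candidate $C_j$ is dense, and tracking the mass that each $R_{A_i}$ puts on the surviving cells.  The non-negativity of $R_{A_i}$ together with its approximate $B$-periodicity keeps this mass controllably large across the $s$ iterations, delivering the stated bound with implicit constant depending on $s$.
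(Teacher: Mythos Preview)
Your overall framework---apply the regularity lemma simultaneously to all $1_{A_i}$, decompose, and show the small and uniform parts contribute negligibly---matches the paper's approach. The divergence is in how you resolve what you correctly identify as the main obstacle: producing a \emph{single} colour class $C_j$ that works for every $A_i$.

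Your proposed iterated pigeonhole over Bohr cells is both unnecessary and, as written, not clearly correct. The description ``at each stage restricting to the subfamily of Bohr cells on which the current candidate $C_j$ is dense, and tracking the mass that each $R_{A_i}$ puts on the surviving cells'' does not explain why the mass of $R_{A_i}$ on the surviving cells stays bounded below through $s$ iterations; a priori different $R_{A_i}$ could concentrate on disjoint families of cells, and your candidate $C_j$ could be dense on cells carrying no $R_{A_i}$-mass for some particular $i$.

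The paper avoids this entirely with a single observation you are overlooking: one does not need to work with the full Bohr partition of $[N]$, only with the Bohr set $B$ centred at the origin. For $y \in B$ the approximate invariance $f_i^{\mathrm{str}}(x+y) \approx f_i^{\mathrm{str}}(x)$ gives
\[
R_{A_i}(y) \approx \sum_x f_i^{\mathrm{str}}(x)^2 \geq N^{-1}\Bigl(\sum_x f_i^{\mathrm{str}}(x)\Bigr)^2 \geq \delta^2 N
\]
simultaneously for \emph{every} $i$. Thus the count is $\gtrsim \delta^2 N \cdot |C_j \cap B|$ for all $i$ at once, and a \emph{single} pigeonhole on the colouring restricted to $B$ yields some $C_j$ with $|C_j \cap B| \geq |B|/r$. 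No iteration is needed, and the dependence on $s$ enters only through the regularity lemma (applied to $s$ functions), not through the choice of $C_j$. This is exactly the content of the structured counting lemma and the subsequent pigeonhole step in the paper's proof of the more general Theorem~\ref{linear density}.
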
 
Next, in order to deduce Schur's theorem from this asymmetric version, we `cleave' colour classes into those which are dense and those which are sparse.  Fix a function $\mathcal{F}(M) \to \infty$. A variant of the pigeon-hole principle (see \S\ref{colouring section}) allows us to find a density $1/M$ with $M = O_{r,\mathcal{F}}(1)$ such that for every colour class $C_i$ one of the following holds:
\begin{itemize}
\item either $C_i$ is $1/M$ dense, in that $|C_i|\geq  N/M$;
\item or $C_i$ is $1/\mathcal{F}(M)$ sparse, in that $|C_i| < N/\mathcal{F}(M)$.
\end{itemize}
We have `cleaved', in that we have found a threshold parameter $M$ such that each colour class is either extremely dense in terms of $M$, or extremely sparse in terms of $M$, there are no intermediate colour classes. Notice that if all colour classes have positive upper density, then on taking $M$ sufficiently large there are no sparse colour classes. The cleaving procedure is designed to isolate those colour classes which are sparse at all scales, having  cardinality $o(N)$ in $[N]$ as $N \to \infty$.

Having cleaved, we apply our asymmetric Schur theorem, taking the sets $A_i$ to be those colour classes which are $1/M$ dense.  This yields a colour class $C_j$ such that for any $1/M$ dense colour class $C_i$ we have
\[
\sum_{x+y =z} 1_{C_i}(x)1_{C_j}(y)1_{C_i}(z) \gg_{M, r} N^2.
\]

We would like to take $i = j$ in the above, but we can only do this if $C_j$ is $1/M$ dense. Let us see why this is so. A counting argument shows that
\[
|C_j|N \geq \sum_{x+y =z} 1_{C_i}(x)1_{C_j}(y)1_{C_i}(z).
\]
Hence 
\begin{equation}\label{cj lower bound}
|C_j| \gg_{M,r} N.
\end{equation}
Provided we have chosen our growth function $\mathcal{F}$ so that the implicit constant in \eqref{cj lower bound} is larger than $1/\mathcal{F}(M)$, we deduce that $C_j$ is not $1/\mathcal{F}(M)$ sparse, hence it must be $1/M$ dense, by cleaving.

\subsection{Adapting this to Bergelson's theorem}

Using quadratic Bohr sets in place of Bohr sets, it is relatively simple to adapt the regularity argument underlying Theorem \ref{asymmetric schur} to prove the following.
\begin{theorem}[Asymmetric Bergelson]\label{asymmetric bergelson}
Let $\delta > 0 $ and $A_1, \dots, A_s \subset [N]$ each with $|A_i| \geq \delta N$. Then for any colouring $[N^{1/2}] = C_1 \cup \dots \cup C_r$ there exists a colour class $C_j$ such that for any $A_i$ we have
\begin{equation}\label{asymmetric bergelson bound}
\sum_{x-y =z^2} 1_{A_i}(x)1_{A_i}(y)1_{C_j}(z) \gg_{\delta, r, s} N^{3/2}(1-o_{\delta, r, s}(1)).
\end{equation}
\end{theorem}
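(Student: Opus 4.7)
The plan is to follow the blueprint sketched for Theorem \ref{asymmetric schur}, replacing additive Bohr sets with \emph{quadratic} Bohr sets of the form
\[
B := \bigset{z \in [N^{1/2}] : \norm{\alpha_k z^2}_\T \leq \eta \text{ for all } k=1, \dots, d},
\]
where $\alpha_1, \dots, \alpha_d \in \T$ are obtained from a Fourier-analytic regularity lemma applied simultaneously to $1_{A_1}, \dots, 1_{A_s}$. For any prescribed $\varepsilon > 0$, this regularity step should deliver $d = d(\delta, s, \varepsilon)$ frequencies and a radius $\eta = \eta(\delta, s, \varepsilon) > 0$ such that each $A_i$ is approximately invariant under squared shifts drawn from $B$; concretely,
\[
\Bigabs{\sum_{x,z} 1_{A_i}(x)\sqbrac{1_{A_i}(x - z^2) - 1_{A_i}(x)}F(z)} \leq \varepsilon N |B|
\]
for every $1$-bounded $F$ supported on $B$.

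Next, by the pigeonhole principle some colour class $C_j \subseteq [N^{1/2}]$ satisfies $|C_j \cap B| \geq |B|/r$, while an equidistribution estimate for quadratic Bohr sets yields $|B| \gg_{\delta, r, s, \varepsilon} N^{1/2}$. Restricting the counting functional to $z \in C_j \cap B \subseteq C_j$ and applying approximate invariance with $F = 1_{C_j \cap B}$ gives
\begin{align*}
\sum_{x - y = z^2} 1_{A_i}(x) 1_{A_i}(y) 1_{C_j}(z)
&\geq \sum_{x, z} 1_{A_i}(x) 1_{A_i}(x - z^2) 1_{C_j \cap B}(z)\\
&\geq |A_i| \cdot |C_j \cap B| - \varepsilon N |B|\\
&\geq \brac{\delta /r - \varepsilon} N |B|,
\end{align*}
which is $\gg_{\delta, r, s} N^{3/2}$ once we fix $\varepsilon < \delta/(2r)$. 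The $(1-o(1))$ factor in \eqref{asymmetric bergelson bound} absorbs any slow decay coming from sending $\varepsilon \to 0$ and $N \to \infty$.

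The main obstacle is establishing the quadratic regularity step itself: proving that a dense set $A \subseteq [N]$ is approximately invariant under squared shifts $x \mapsto x + z^2$ with $z$ drawn from a quadratic Bohr set built from the significant Fourier frequencies of $1_A$. The difficulty is that the squaring $z \mapsto z^2$ prevents a direct Plancherel-style expansion of $\sum_x 1_A(x)1_A(x - z^2)$; instead one must pass from the ordinary Fourier transform of $1_A$ to the quadratic Fourier transform $\tilde{1_B}$ of Definition \ref{quadratic fourier transform}, then harness restriction-type estimates for the latter. A secondary concern is guaranteeing the lower bound $|B| \gg N^{1/2}$, which can be corrupted by rational frequencies $\alpha_k$ with small denominators; this is typically remedied by a $W$-trick or by restricting $z$ to $w$-rough numbers. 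Both obstacles fit naturally within the restriction machinery developed in Sections \ref{abstract restriction section} and \ref{Fourier Pseudorandomness}.
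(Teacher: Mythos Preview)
Your overall strategy---regularity lemma, quadratic Bohr set, pigeonhole on the colouring, restriction estimate to control the error---is precisely the paper's approach, which is carried out rigorously as the special case $L_1(x_1,x_2)=x_1-x_2$, $L_2(y)=y$, $W=1$ of Theorem~\ref{linear density}. However, your central ``approximate invariance'' claim is false as stated and needs correcting.

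You assert that for $z$ ranging over the quadratic Bohr set $B$,
\[
\sum_{x,z} 1_{A_i}(x)1_{A_i}(x-z^2)F(z) = |A_i|\sum_z F(z) + O(\varepsilon N|B|),
\]
i.e.\ that $1_{A_i}$ itself is approximately invariant under shifts by $z^2$. But take $A_i$ a random set of density $\delta$: then $f_i^{\mathrm{str}}\approx \delta$ is constant, the Bohr set is essentially all of $[N^{1/2}]$, and $\sum_x 1_{A_i}(x)1_{A_i}(x-z^2)\approx \delta^2 N$ for typical $z$, not $\delta N = |A_i|$. The discrepancy comes from the term $\sum_x 1_{A_i}(x)f_i^{\mathrm{unf}}(x)$, which for a random set is $\approx \delta(1-\delta)N$ and is \emph{not} controlled by $\|\hat f_i^{\mathrm{unf}}\|_\infty$. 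Only the structured part $f_i^{\mathrm{str}}$ is genuinely invariant under the Bohr-set shifts. The paper's argument (see Corollary~\ref{structured corollary} and the proof following Lemma~\ref{bohr set bound}) instead shows that the count is close to $\sum_x f_i^{\mathrm{str}}(x)^2\cdot |C_j\cap B|$, and then uses Cauchy--Schwarz on $f_i^{\mathrm{str}}$ (which has mean $\delta$) to get a main term $\geq \delta^2 N |C_j\cap B|/r$ rather than your $\delta N|C_j\cap B|/r$. This loss of a factor of $\delta$ is harmless for the final $\gg_{\delta,r,s}$ conclusion, but your intermediate inequality is simply not true.

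Two smaller points. First, the uniform part is handled exactly as you suspect, via H\"older and Bourgain's $L^6$ restriction estimate for $\tilde 1_B$---this is the content of Lemma~\ref{linear fourier control} in the case $s_1=2$, $s_2=1$, $W=1$. Second, your worry about rational frequencies corrupting the bound $|B|\gg N^{1/2}$ is misplaced: Lemma~\ref{bohr set bound} (Weyl-type recurrence for polynomial sequences) gives this lower bound for \emph{arbitrary} frequencies, with no $W$-trick needed. The $W$-trick in this paper is used elsewhere, when the dense variables are themselves squares.
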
 
The problem now is how to cleave. Notice that \eqref{asymmetric bergelson bound} counts $z \in C_j\cap[ N^{1/2}]$, and the density/sparsity of $C_j$ on the interval $[N^{1/2}]$ may be independent of the density/sparsity of $C_j$ on $[N]$ (see the colouring \eqref{bad colouring}). To overcome this we find $M = O_{r,\mathcal{F}}(1)$ and scales $X,X_1, \dots, X_r$ with $X_i \geq X^2$ such that $C_i$ is $1/M$ dense on $[X_i]$ if it is $1/\mathcal{F}(M)$ dense on $[X]$. Averaging, there is a translate $a_i + [X^2]$ such that if $C_i$ is $1/\mathcal{F}(M)$ dense on $[X]$ then $C_i$ is $1/M$ dense on $a_i + [X^2]$.  We then take
\[
A_i := \set{x \in [X^2] : a_i + x \in C_i}
\]
in Theorem \ref{asymmetric bergelson}, and apply a similar argument to that given for Schur's theorem.

We note that key to the success of this strategy is the translation invariance of the linear form $x- y$, in that 
\[
(x+a) - (y+a) = z \quad\text{iff} \quad x - y = z.
\]
This is a property enjoyed by any linear form whose coefficients sum to zero. Unfortunately, the same is not true of a quadratic form whose coefficients sum to zero. Overcoming this is the subject of the next subsection.

\subsection{Linearisation via transference}

To prove Theorem \ref{quadratic form satisfies rado}, when the coefficients of the quadratic form satisfy Rado's criterion, we combine our `cleaving' strategy with the following asymmetric density-colouring result.

\begin{theorem}[Quadratic density--colouring result]\label{special quad}
Let $\delta >0$ and let $r$ be a positive integer. For any sets of integers $A_1, \dots, A_s \subset [N]$  each satisfying $|A_i| \geq \delta N$ and for any $r$-colouring $B_1 \cup \dots \cup B_r = [N]$ there exists $B \in \set{B_1, \dots, B_r}$ such that for all $A\in \set{A_1, \dots, A_s}$ we have
\begin{equation*}
\sum_{x_1^2-x_2^2 = y^2 + z_1  + z_2} 1_A(x_1)1_A(x_2)1_{B}(y)1_{B}(z_1)1_{B}(z_2)  \gg_{ \delta, r,s}  N^{3  }(1-o_{\delta, r, s}(1)).
\end{equation*}
\end{theorem}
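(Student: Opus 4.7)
The plan is to recast the counting sum $S$ Fourier-analytically and bound it via the arithmetic regularity lemma combined with the mixed restriction estimates developed earlier in the paper. Opening out the equation $x_1^2-x_2^2=y^2+z_1+z_2$ via the orthogonality identity $\int_{\T}e(\alpha n)\intd\alpha=\mathbf{1}_{n=0}$, one obtains (using that $S\in\R$)
\[
S = \int_{\T}\abs{\tilde{1_A}(\alpha)}^2\,\tilde{1_B}(\alpha)\,\hat{1_B}(\alpha)^2\,\intd\alpha,
\]
where $\tilde{\cdot}$ is the quadratic Fourier transform of Definition \ref{quadratic fourier transform}. With $\eta:=|B|/N$, a random-model heuristic predicts $\asymp \delta^2\eta^3 N^3$ solutions, so the target is $S\gg_{\delta,r,s}\eta^3 N^3$ for a colour class $B$ with $\eta\gg_r 1$ on a suitable structured set.

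First I would perform a $W$-trick, restricting each of the quadratic variables $x_1, x_2, y$ to residues coprime to all primes up to some parameter $w$, which tames contributions near rationals of small denominator. I would then apply the Fourier-analytic regularity lemma (Lemma \ref{arithmetic regularity}) to the colouring, producing a Bohr set $T$ for which each $1_{B_i}$ decomposes as $1_{B_i}=f_i+g_i+h_i$, with $f_i$ approximately $T$-invariant and structured as a Bohr-set average, $\|\hat{g_i}\|_\infty$ arbitrarily small, and $\|h_i\|_2$ arbitrarily small. A pigeon-holing step identifies a single colour class $B$ whose structured part $f$ has Bohr density $\gg 1/r$; this $B$ is the colour class required by the theorem, and the choice is independent of $A$. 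Substituting $1_B=f+g+h$ into the displayed integral and expanding multilinearly produces $3^3=27$ terms. The $(f,f,f)$ term is the intended main term, evaluated by a direct major-arc computation on the Bohr-set structure of $f$; Cauchy--Schwarz guarantees that any $A$ of density at least $\delta$ inherits density $\gg \delta$ on the relevant coset structure, from which a contribution of order $\delta^2\eta^3 N^3$ falls out.

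The main obstacle is showing that the remaining twenty-six error terms are each $o_{\delta,r,s}(N^3)$ as the regularity parameters grow. A na\"ive Cauchy--Schwarz separating the quadratic factor $\tilde{1_B}$ from the linear factors $\hat{1_B}^2$ loses a full power of $N$, since the trivial bounds $\|\tilde{1_B}\|_{L^\infty}, \|\hat{1_B}\|_{L^\infty}\leq |B|$ do not interact helpfully. The resolution is to invoke the mixed restriction estimates of Lemma \ref{mixed restriction}, which furnish sharp $L^p$-bounds on mixed products such as $\tilde{1_A}\hat{1_B}$ and $\tilde{1_B}\hat{1_B}$ for some $p>2$; pairing these with the smallness of $\|\hat{g}\|_\infty$ for $g$-terms and of $\|h\|_2$ for $h$-terms delivers the required gain. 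Uniformity of $B$ over all $A\in\{A_1,\dots,A_s\}$ is then automatic, because every estimate depends on $A$ only through the density bound $|A|\geq \delta N$.
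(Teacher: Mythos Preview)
Your approach diverges from the paper's and has a genuine gap in the main-term computation. The paper does \emph{not} apply the arithmetic regularity lemma to the colour classes $B_i$; it applies regularity to the dense sets $A_i$, but only \emph{after} linearising the quadratic $x$-variables via the $W$-trick and the transference principle. Concretely, each $A_i$ is pigeon-holed onto a progression $\zeta_i(W_i\Z+\xi_i)$ with $\hcf(\xi_i,W_i)=1$, so that $x_1^2-x_2^2$ becomes a linear function of new variables; the weighted indicator $f_i$ of $A_i'$ is then majorised by $\nu_{W_i,\xi_i}$ and replaced (via the dense model lemma and Lemma~\ref{fourier decay}) by a genuinely bounded $g_i:[X]\to[0,1]$. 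The \emph{linear} density result (Theorem~\ref{linear density}) is then applied to the $g_i$, and it is inside that theorem that regularity is invoked on the dense sets; the colour class $B_j$ is located by pigeon-hole on a quadratic Bohr set. Finally, Lemma~\ref{fourier control} transfers the count from $g_i$ back to $f_i$.

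Your scheme, by contrast, leaves the $x$-variables quadratic and unstructured. The claimed main-term lower bound ``Cauchy--Schwarz guarantees that any $A$ of density at least $\delta$ inherits density $\gg\delta$ on the relevant coset structure'' is not justified: a generic dense $A\subset[N]$ need not have its squares well-distributed along the Bohr frequencies arising from the structured part of $B$, and Cauchy--Schwarz gives upper bounds, not lower bounds, on $|\tilde{1_A}|^2$ at specified frequencies. There is a second gap in the error analysis: when the uniform part $g$ of $1_B$ occupies the quadratic $y$-slot (so enters as $\tilde g$ rather than $\hat g$), smallness of $\|\hat g\|_\infty$ gives no control, and the mixed restriction estimates of Lemma~\ref{mixed restriction} are calibrated for the post-linearisation scaling ($B\subset[(N/W_2)^{1/2}]$), not the raw setup with all variables in $[N]$. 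The linearisation step is what makes both the main term and the error terms tractable; without it the argument does not close.
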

This is a representative special case of Theorem \ref{quadratic-linear density}, which we have stated for  simplicity.
Using a Fourier analytic transference principle (see \cite{PrendivilleFour}), we deduce Theorem \ref{special quad} from a \emph{linear} density--colouring result, where we have removed the squares from the $x_i$ variables.
\begin{lemma}[Linear density--colouring result]\label{special lin}
Let $\delta >0$ and let $r$ be a positive integer. For any sets of integers $A_1, \dots, A_s \subset [N^2]$  each satisfying $|A_i| \geq \delta N^2$ and for any $r$-colouring $B_1 \cup \dots \cup B_r = [N]$ there exists $B \in \set{B_1, \dots, B_r}$ such that for all $A\in \set{A_1, \dots, A_s}$ we have
\begin{equation*}
\sum_{x_1-x_2 = y^2 + z_1  + z_2} 1_A(x_1)1_A(x_2)1_{B}(y)1_{B}(z_1)1_{B}(z_2)  \gg_{ \delta, r,s}  N^{5  }(1-o_{\delta, r, s}(1)).
\end{equation*}
\end{lemma}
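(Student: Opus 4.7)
The equation $x_1-x_2=y^2+z_1+z_2$ is linear and translation-invariant in $(x_1,x_2)$, and the two sides live on different scales: $x_1,x_2\in A_i\subseteq[N^2]$ while $y,z_1,z_2\in B\subseteq[N]$. This is precisely the regime handled by the abelian arithmetic regularity lemma of Green \cite{GreenSzemeredi} and Green--Tao \cite{GreenTaoArithmetic}, and the plan is to rerun the asymmetric regularity-plus-pigeonhole strategy sketched above for Schur's and Bergelson's theorems, taking the weighted shift set to be
\[
\nu_B(h):=\#\set{(y,z_1,z_2)\in B^3 : y^2+z_1+z_2=h},
\]
which is supported in $[0,3N^2]$ with total mass $|B|^3$.

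First I would pigeon-hole on the colouring to pick a colour class with $|B|\geq N/r$. Applying the arithmetic regularity lemma to the indicators $1_{A_1},\dots,1_{A_s}$ simultaneously, with a growth function chosen in terms of $\delta$, $r$, $s$, yields a common Bohr partition of $[N^2]$ of bounded complexity together with decompositions
\begin{equation*}
1_{A_i}=f_i^{\mathrm{str}}+f_i^{\mathrm{unif}}\qquad(i=1,\dots,s),
\end{equation*}
where each $f_i^{\mathrm{str}}$ is $1$-bounded, non-negative, has mean at least $\delta$ on $[N^2]$, and is constant on cells of the partition, while $f_i^{\mathrm{unif}}$ is Fourier-uniform in the sense $\bignorm{\hat f_i^{\mathrm{unif}}}_\infty\leq\eps N^2$ for whatever $\eps>0$ we specify.

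Setting
\begin{equation*}
T(f_1,f_2):=\sum_h \nu_B(h)(f_1*f_2^{-})(h),\qquad f_2^{-}(x):=f_2(-x),
\end{equation*}
the quantity to be bounded is $T(1_{A_i},1_{A_i})$. Bilinearity expands this into a main term $T(f_i^{\mathrm{str}},f_i^{\mathrm{str}})$ and three cross-terms, each of which is a Fourier integral containing a factor $\hat f_i^{\mathrm{unif}}$. I would bound the cross-terms by extracting $\bignorm{\hat f_i^{\mathrm{unif}}}_\infty\leq\eps N^2$ in $\ell^\infty$ and applying H\"older with the mixed $L^p$-restriction estimates for $\tilde 1_B$ and $\hat 1_B$ developed in \S\ref{Fourier Pseudorandomness}; choosing $\eps$ small in terms of $\delta,r,s$ then renders each cross-term $o(N^5)$. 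For the main term, approximate Bohr-invariance of $f_i^{\mathrm{str}}$ gives $(f_i^{\mathrm{str}}*f_i^{\mathrm{str},-})(h)\gg\delta^2 N^2$ for $h$ outside a negligible union of Bohr cells; combined with the fact that $\nu_B$ has mass $|B|^3\gg N^3/r^3$ spread over an interval of length $\asymp N^2$, a counting argument yields $T(f_i^{\mathrm{str}},f_i^{\mathrm{str}})\gg N^5/r^3$, whence $T(1_{A_i},1_{A_i})\gg N^5$ uniformly in $i$.

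The principal obstacle lies in the main-term lower bound: rigorously verifying that the polynomial shifts $h=y^2+z_1+z_2$ with $(y,z_1,z_2)\in B^3$ distribute evenly enough across the Bohr cells that a positive proportion of the mass $|B|^3$ lands on shifts where the autocorrelation of $f_i^{\mathrm{str}}$ attains its typical value $\gg\delta^2 N^2$. This amounts to a Weyl-type equidistribution statement for $y^2+z_1+z_2$ on $B$, which one can secure either by a direct major/minor-arc analysis of $\hat\nu_B$ or, following the Bergelson sketch, by a further pigeon-hole restricting $B$ to a sub-colour-class on a suitable arithmetic progression or Bohr set.
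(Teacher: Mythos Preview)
Your order of operations is the problem. You pigeonhole first to fix a colour class $B$ with $|B|\geq N/r$, and only afterwards apply the regularity lemma to the $A_i$. But the frequencies $\theta\in\T^d$ produced by the regularity lemma depend on the $A_i$, not on $B$, and there is no reason a merely dense $B$ should interact well with them. Concretely: take $q\leq r$, let each $A_i$ be the multiples of $q$ in $[N^2]$, and let the colouring be by residues mod $q$. Your density pigeonhole could hand you $B=\{y\in[N]:y\equiv 1\pmod q\}$, whence $y^2+z_1+z_2\equiv 3\pmod q$ for every $(y,z_1,z_2)\in B^3$, and the count you are trying to bound below is identically zero. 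Your claim that $(f_i^{\mathrm{str}}*f_i^{\mathrm{str},-})(h)\gg\delta^2 N^2$ ``for $h$ outside a negligible union of Bohr cells'' is false in general: the autocorrelation of a Bohr-structured function is large only for $h$ in the Bohr set $\{h:\|\theta_j h\|\text{ small for all }j\}$, which may itself be a negligibly thin cell.

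The paper reverses the order. One first applies the regularity lemma to the $1_{A_i}$, extracting the common frequencies $\theta$. These determine a \emph{quadratic} Bohr set
\[
B_2=\bigl\{y\in[\eta N]:\ \|\theta_j y^2\|,\ \|\theta_j y\|\leq \eta\text{ for all }j\bigr\},
\]
which is shown to have size $\gg_{\eta,d} N$. \emph{Only then} does one pigeonhole among the colour classes to find $C_j$ with $|C_j\cap B_2|\geq |B_2|/r$. Every triple $(y,z_1,z_2)\in (C_j\cap B_2)^3$ now produces a shift $h=y^2+z_1+z_2$ with $\|\theta_i h\|$ small, so the structured autocorrelation at $h$ is genuinely $\gg\delta^{2}N^2$ by Lipschitz continuity. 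The uniform error is disposed of much as you describe, via Lemma~\ref{linear fourier control}; for this particular equation no heavy restriction machinery is needed, only Parseval, H\"older and an elementary sixth-moment bound. Your closing parenthetical about ``a further pigeon-hole restricting $B$ to \ldots\ a Bohr set'' is in fact the whole point, not an optional refinement: it must replace, not follow, the initial density pigeonhole, since it may select a different colour class altogether.
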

This is superficially similar to the strategy employed in \cite{CLPRado}, but without the presence of the strongly structured `homogeneous sets' (more properly termed multiplicatively syndetic sets, see \cite{ChapmanPartition}). The lack of such structure presents additional obstacles too technical to discuss here. We refer the interested reader to \S\ref{quadratic density sec}.

\section{A linear density result}\label{linear density section}

The aim of this section is to count solutions to equations of the form \eqref{linear equals quadratic} when certain linear variables are constrained to dense sets, and the remaining variables are constrained to a colouring. We eventually use this density result to derive both our linear counting result (Theorem \ref{linear form satisfies rado}) and our quadratic counting result (Theorem \ref{quadratic form satisfies rado}).  Before stating this we remind the reader of our conventions (Definition \ref{linear form}) regarding linear forms.

\begin{theorem}[Linear density result]\label{linear density}
Let $L_1, L_2, L_3$ denote non-singular linear forms, each in $s_i$ variables with $s_1\geq 2$ and $s_1 + s_2 \geq 3$ (we allow for $s_2 = 0$ or $s_3=0$). Suppose that $L_1(1, \dots, 1) = 0$.
For any $\delta >0$ and positive integer $r$, there exists $\eta \gg_{ r,\delta} 1$ such that for any positive integers $W$ and $N$, either $N\ll_{\delta, r, W}1$ or the following holds. Suppose that $W=1$ or $s_3 > 0$.  Then for any sets $A_1, \dots, A_r \subset [N]$  with $|A_i| \geq \delta N$ for all $i$, and any $r$-colouring $C_1 \cup \dots \cup C_r = [\eta (N/W)^{1/2}, (N/W)^{1/2}]$, there exists $C_j$ such that for all $A_i$ we have
\begin{equation*}
\sum_{L_1(x) =WL_2(y^2) + L_3(z)} 1_{A_i}(x_1)\dotsm 1_{A_i}(x_{s_1})1_{C_j}(y_1)\dotsm 1_{C_j}(y_{s_2}) 1_{C_j}(z_1)\dotsm 1_{C_j}(z_{s_3})\\ \geq \eta 
 N^{s_1 + \frac12\brac{s_2 + s_3} -1}W^{-\frac12\brac{s_2 + s_3}}.
\end{equation*}
\end{theorem}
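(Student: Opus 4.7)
The plan is to apply the arithmetic regularity lemma of Green (in the Green--Tao formulation) to the $r$-colouring on the small scale $M := (N/W)^{1/2}$ and, in parallel, to the dense sets $A_1,\dots,A_r$ on the large scale $N$, then exploit the translation invariance afforded by $L_1(1,\dots,1)=0$ to count $x$-solutions within the resulting Bohr-structured components.

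First, fix a growth function $\mathcal{F}$ and apply the regularity lemma to $\{1_{A_i}\}$ on $[N]$ and to $\{1_{C_j}\}$ on $I := [\eta M, M]$, aligning the two Bohr frequency sets up to a rescaling by $W^{1/2}$. This produces a common dimension $d = O_{\mathcal{F},r,\delta}(1)$ and decompositions
\[
1_{A_i} = g_i^{\mathrm{str}} + g_i^{\mathrm{unif}}, \qquad 1_{C_j} = f_j^{\mathrm{str}} + f_j^{\mathrm{unif}},
\]
where $g_i^{\mathrm{str}}, f_j^{\mathrm{str}}$ are locally constant on translates of Bohr sets $B^{(N)} \subseteq [N]$ and $B \subseteq I$, and the uniform parts satisfy $\|\widehat{g_i^{\mathrm{unif}}}\|_\infty \leq N/\mathcal{F}(d)$ and $\|\widehat{f_j^{\mathrm{unif}}}\|_\infty \leq M/\mathcal{F}(d)$. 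Pigeon-holing across the colours locates an index $j^{*}$ and a translate $t+B \subseteq I$ on which $f_{j^{*}}^{\mathrm{str}}(y) \geq 1/(2r)$ for a positive proportion of $y$.

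Next, substitute the decompositions into $T$, viewed as an $(s_1+s_2+s_3)$-linear form, and expand multilinearly. For the main (all-structured) term, restrict $(y,z) \in (t+B)^{s_2+s_3}$; for each such $(y,z)$ the equation reduces to $L_1(x)=c$ with $c = WL_2(y^2)+L_3(z)$. Since $L_1(1,\dots,1)=0$, the reparametrisation $x_k = x_k' + h$ decouples a free shift $h \in \Z$, reducing the inner count to a Varnavides-type problem for a translation-invariant linear form on the dense set $A_i$. This yields $\gg_\delta N^{s_1-1}$ solutions $x \in A_i^{s_1}$ uniformly in the admissible $c$; multiplying by the Bohr volume $|B|^{s_2+s_3} \gg_{\mathcal{F},r} M^{s_2+s_3}$ gives the target lower bound, provided $N \gg_{\delta,r,W} 1$.

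The principal obstacle will be the error terms involving an $f_{j^{*}}^{\mathrm{unif}}$ in a quadratic slot $L_2(y^2)$: the relevant exponential sum $\widetilde{f_{j^{*}}^{\mathrm{unif}}}(Wb_\ell \alpha)$ is not directly controlled by $\|\widehat{f_{j^{*}}^{\mathrm{unif}}}\|_\infty$. I would handle this by Weyl-differencing $|\widetilde{f_{j^{*}}^{\mathrm{unif}}}|^2$ into a linear Fourier coefficient of a shifted autocorrelation of $f_{j^{*}}^{\mathrm{unif}}$, then applying Cauchy--Schwarz across the remaining slots to reduce to a linear von Neumann argument. The side condition ``$W=1$ or $s_3 > 0$'' enters at exactly this step, ensuring either no $W$-divisibility obstruction or a spare linear $L_3$-slot to absorb the Weyl shift; the hypothesis $N \gg_{\delta,r,W} 1$ absorbs both the regularity-lemma losses (in $d$ and the Bohr radius) and the Weyl discrepancy. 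Uniform errors in the $L_1$-slots and the linear $L_3$-slots are handled by a routine generalised von Neumann inequality combined with the $\|\widehat{\cdot}\|_\infty$ bounds above.
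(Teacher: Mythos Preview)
Your proposal contains a genuine gap at the step where you handle uniform parts in the quadratic $y$-slots. Weyl-differencing $|\widetilde{f_{j^*}^{\mathrm{unif}}}(W\alpha)|^2$ produces a sum over shifts $h$ of linear Fourier coefficients of the autocorrelations $\Delta_h f_{j^*}^{\mathrm{unif}}(y) = f_{j^*}^{\mathrm{unif}}(y)\overline{f_{j^*}^{\mathrm{unif}}(y-h)}$, but $\bignorm{\widehat{\Delta_h f}}_\infty$ is \emph{not} controlled by $\bignorm{\hat f}_\infty$: take $f(y)=e(\gamma y^2)$ for irrational $\gamma$, so that $\hat f$ is flat of size $\asymp M^{1/2}$, yet $\Delta_h f$ is a pure linear phase and $\widehat{\Delta_h f}$ has a coefficient of size $M$. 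This is precisely the $U^2$-versus-$U^3$ obstruction, and no amount of Cauchy--Schwarz across the remaining slots repairs it without either upgrading to quadratic (Gowers) regularity or abandoning the decomposition of $C_j$ altogether. A secondary issue: your main-term claim that a dense set $A_i$ has $\gg_\delta N^{s_1-1}$ solutions to $L_1(x)=c$ \emph{uniformly} in $c$ is false in general (take $s_1=2$, $L_1=x_1-x_2$, $A_i=[N/2]$, $c>N/2$), so the coupling between the Bohr structure on the $(y,z)$-side and the $x$-count must be made explicit rather than invoked as a black-box Varnavides.

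The paper avoids both problems by a cleaner architecture: it applies the regularity lemma \emph{only} to the dense sets $A_i$, obtaining frequencies $\theta\in\T^d$ and a decomposition $1_{A_i}=f_i^{\mathrm{str}}+f_i^{\mathrm{sml}}+f_i^{\mathrm{unf}}$. It then defines a \emph{quadratic} Bohr set $B_2\subset[(N/W)^{1/2}]$ by the conditions $\|\theta_i W x^2/c\|_\T,\|\theta_i x/c\|_\T\leq\eps/M$, and uses pigeon-hole on the colouring to find some $C_j$ with $|C_j\cap B_2|\geq|B_2|/r$. The colour indicators $1_{C_j}$ are \emph{never decomposed}; they sit intact in the $y,z$-slots. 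Because $(y,z)\in B_2^{s_2+s_3}$ forces $WL_2(y^2)+L_3(z)$ to be small in the Bohr metric of $\theta$, the Lipschitz structured parts $f_i^{\mathrm{str}}$ are approximately invariant under this shift, which is what drives the main-term lower bound (Corollary~\ref{structured corollary}). The uniform parts $f_i^{\mathrm{unf}}$ occur only in \emph{linear} $x$-slots and are removed by a Fourier control lemma (Lemma~\ref{linear fourier control}); the hypothesis ``$W=1$ or $s_3>0$'' enters there, to secure the mixed moment estimate $\bignorm{\tilde 1_B(cW\alpha)\hat 1_B(c'\alpha)}_6\ll N^{5/6}W^{-1}$ when $s_1=2$.
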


We prove Theorem \ref{linear density} using Fourier analysis and the arithmetic regularity lemma. To state the regularity lemma we require the following.
\begin{definition}[Lipschitz constant on $\T^d$]
We say that $F : \T^d \to \C$ is $M$-Lipschitz if for any $\alpha, \beta \in \T^d$ we have
$$
|F(\alpha)-F(\beta)| \leq M \min_{1\leq i \leq d} \norm{\alpha_i - \beta_i}_\T.
$$
\end{definition}

\begin{lemma}[Arithmetic regularity]\label{arithmetic regularity}
Let $\eps > 0$ and let $\mathcal{F} : \N \to \N$.  For any functions $f_i : [N] \to [0, 1]$ with $i = 1, \dots, r$ there exists  $M \ll_{\eps, \mathcal{F},r} 1$ and decompositions 
$$
f_i = f_i^\mathrm{str} + f_i^\mathrm{sml} + f_i^\mathrm{unf} \qquad (1 \leq i \leq r),
$$
with the following properties.
\begin{enumerate}
\item[\emph{\textbf{(Str).}}] There exist $d\leq M$ and $\theta \in \T^d$, such that for each $i$ there is an $M$-Lipschitz function $F_i : \T^d \to [0,1]$ with 
$f_i^\mathrm{str}(x) = F_i( \theta x)$ for all $x \in [N]$.
\item[\emph{\textbf{(Sml).}}] $f_i^\mathrm{sml} : [N] \to [-1,1]$ with $$\sum_{x \in [N]} f_i^\mathrm{sml}(x) = 0, \quad \norm{f_i^\mathrm{sml}}_1 \leq \eps \norm{1_{[N]}}_1 \quad \text{and} \quad f_i^\mathrm{str} + f_i^\mathrm{sml} \geq 0.$$
\item[\emph{\textbf{(Unf).}}]  $f_i^\mathrm{unf} : [N] \to [-1,1]$ with $$ \sum_{x \in [N]} f_i^\mathrm{unf}(x) = 0 \quad\text{and} \quad\bignorm{\hat{f}_i^\mathrm{unf}}_{\infty} \leq \norm{\hat{1}_{[N]}}_{\infty}/\mathcal{F}(M).$$
\end{enumerate}
\end{lemma}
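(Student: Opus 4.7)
The plan is an $L^2$ energy-increment iteration followed by a Fejér regularization, in the spirit of Green and Green--Tao. All $r$ functions are processed in parallel so that they share a single frequency set $\Lambda \subset \T$, hence a common dimension $d$ and vector $\theta \in \T^d$. Initialize $\Lambda_0 = \emptyset$; at stage $k$, let $P_k$ denote $\ell^2([N])$-orthogonal projection onto the span of $\set{x \mapsto e(\lambda x)1_{[N]}(x) : \lambda \in \Lambda_k}$. If some residual $f_i - P_k f_i$ has a Fourier coefficient of modulus exceeding a threshold $\tau_k \asymp N / \mathcal{F}(C(|\Lambda_k|, \eps))$ (with an auxiliary function $C$ chosen below), append that frequency to $\Lambda_k$ and iterate. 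A Bessel-style computation yields an energy increment $\norm{P_{k+1}f_i}_2^2 - \norm{P_k f_i}_2^2 \gg \tau_k^2/N$, while $\sum_i \norm{P_k f_i}_2^2 \leq rN$, so the iteration halts after a number of steps bounded purely in terms of $\eps, r, \mathcal{F}$. One then takes $\Lambda$ to be the resulting frequency set, $d := |\Lambda|$, $\theta := (\lambda_1, \dots, \lambda_d)$.

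The final projection $P_k f_i$ is, up to small boundary defects from near-collisions among the $\lambda_j$, a trigonometric polynomial $G_i(\theta x) = \sum_\lambda c_{i,\lambda}\, e(\lambda x)$ with bounded coefficients $c_{i,\lambda}$. Convolving $G_i$ on $\T^d$ with a Fejér kernel $K_\kappa$ of width $\kappa$ produces $F_i := G_i * K_\kappa$ satisfying $\norm{F_i - G_i}_\infty \leq \eps/2$, with $F_i$ being $M$-Lipschitz in the paper's sense, where $M$ absorbs the parameters $1/\kappa$, $d$, and $\max |c_{i,\lambda}|$. Setting $f_i^\mathrm{str}(x) := F_i(\theta x)$, $f_i^\mathrm{unf} := f_i - P_k f_i$ (mean-corrected), and $f_i^\mathrm{sml} := P_k f_i - f_i^\mathrm{str}$ (mean-corrected), I would verify each condition in turn: \textbf{(Str)} from the Lipschitz estimate; \textbf{(Unf)} from the stopping rule $|\widehat{f_i - P_k f_i}(\lambda)| \leq \tau_k \leq \norm{\hat{1}_{[N]}}_\infty / \mathcal{F}(M)$; and \textbf{(Sml)} from $\norm{F_i - G_i}_\infty \leq \eps/2$, which controls $\norm{f_i^\mathrm{sml}}_1 \leq \eps \norm{1_{[N]}}_1$ pointwise. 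Nonnegativity $f_i^\mathrm{str} + f_i^\mathrm{sml} = f_i - f_i^\mathrm{unf} \geq 0$ follows because $\norm{f_i^\mathrm{unf}}_\infty$ is tiny (any residual defect may be absorbed into $f_i^\mathrm{sml}$, which costs only a negligible contribution to its $L^1$ norm).

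The principal technical difficulty is the simultaneous balancing of the three parameters---the iteration threshold $\tau_k$, the Fejér scale $\kappa$, and the required output uniformity $\norm{\hat{1}_{[N]}}_\infty / \mathcal{F}(M)$---so that the final bound on $M$ depends only on $\eps, r, \mathcal{F}$, while all three of \textbf{(Str)}, \textbf{(Sml)}, \textbf{(Unf)} are delivered at once. The growth function $\mathcal{F}$ in the hypothesis is precisely the device that makes this possible: one chooses $\tau_k$ in terms of the current dimension via $\mathcal{F}$, so that at each stage the required uniformity is just slightly weaker than what the next step would demand, which yields an $M$ bounded independently of $N$. A separate minor obstacle is dealing with near-collisions of frequencies in $\Lambda$ that make the spanning characters non-orthogonal; this is resolved by either a minor perturbation argument or by replacing individual projections with projections onto Bohr-set characters, both standard.
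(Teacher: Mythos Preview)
Your overall strategy---energy increment to build a common frequency set $\Lambda$, then Fej\'er-type smoothing to produce Lipschitz $F_i$---is the correct scaffolding and matches the approach in the references the paper cites (Green--Tao and Tao). However, there is a genuine gap in how you secure the nonnegativity clause $f_i^\mathrm{str}+f_i^\mathrm{sml}\geq 0$ and the range $F_i:\T^d\to[0,1]$.

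You set $f_i^\mathrm{unf}=f_i-P_kf_i$ and then assert that $\norm{f_i^\mathrm{unf}}_\infty$ is tiny. This is false: small Fourier coefficients do not imply small $L^\infty$ norm. The uniform part is only guaranteed to satisfy $\norm{f_i^\mathrm{unf}}_\infty\leq 1$ (as the lemma itself records), not anything better. Hence $P_kf_i=f_i-f_i^\mathrm{unf}$ may range over $[-1,2]$, your trigonometric polynomial $G_i$ need not be $[0,1]$-valued, and the Fej\'er-smoothed $F_i$ need not land in $[0,1]$ either. Your parenthetical fix---absorbing the defect into $f_i^\mathrm{sml}$---does not save this, since the negative part of $P_kf_i$ can have $L^1$ norm of order $N$, not $\eps N$.

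The standard remedy, and the one used in the cited sources, is to replace $\ell^2$-orthogonal projection onto the span of characters $\set{x\mapsto e(\lambda x):\lambda\in\Lambda}$ by \emph{conditional expectation} onto the $\sigma$-algebra $\mathcal{B}$ generated by the Bohr atoms of $\Lambda$ at a suitable resolution (equivalently, by averaging $f_i$ over translates by a Bohr set). Conditional expectation of a $[0,1]$-valued function is automatically $[0,1]$-valued, so $f_i^\mathrm{str}+f_i^\mathrm{sml}=\E[f_i\mid\mathcal{B}]\in[0,1]$ comes for free; the Lipschitz $F_i$ is then obtained by smoothing this piecewise-constant function on $\T^d$, and the smoothing error is what goes into $f_i^\mathrm{sml}$. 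The energy-increment step runs exactly as you describe, but with $\norm{\E[f_i\mid\mathcal{B}_k]}_2^2$ in place of $\norm{P_kf_i}_2^2$: if $f_i-\E[f_i\mid\mathcal{B}_k]$ has a large Fourier coefficient at $\lambda$, refine $\mathcal{B}_k$ by the level sets of $x\mapsto\lambda x$ and gain energy via Pythagoras. With this single substitution the remainder of your outline, including the parameter balancing you describe, goes through.
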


\begin{proof}
This can be proved by following the arguments of  \cite{GreenTaoArithmetic} or \cite{TaoHigher}. \end{proof}
To prove Theorem \ref{linear density} we apply the regularity lemma to decompose each $1_{A_i}$ into a structured, small and uniform part. We eventually show that the small and uniform parts do not contribute substantially to our count of solutions. It is therefore necessary to show that the structured part has a large contribution.
\begin{lemma}[Structured counting lemma]
Let $L_1, L_2, L_3$ be linear forms, each in $s_i$ variables. 
Given an $M$-Lipschitz function $F : \T^d \to [0,1]$ and $\theta \in \T^d$, define $f : \Z \to [0,1]$ by
$$
f(x) :=\begin{cases} F(\theta x), & x \in [N];\\
				0, & x \notin [N]. \end{cases}
$$ 
For fixed $0 < \eta \leq 1/2$, define the Bohr set
$$
B_1:= \set{x \in [\eta N] : \norm{\theta_i x}_\T \leq \eta \text{ for all } i}.
$$
For integers $c,W \geq 1$ let $B_2'$ denote a subset of the quadratic Bohr set
$$
B_2 :=\set{x \in [\eta (N/W)^{1/2}] : c \mid x \text{ and }\norm{\theta_i Wx^2/c}_\T, \bignorm{\theta_i x/c}_\T \leq \eta \text{ for all } i}.
$$
Then 
\begin{multline*}
\sum_{x\in [N]}\sum_{\substack{ d_i \in B_1 \\ y_j, z_k \in  B_2'}} f(x) f(x+cd_1)\dotsm f(x+cd_{s_1}) f\brac{x +L_1(d)+\tfrac{WL_2(y^2) + L_3(z)}{c}}\\ \geq |B_1|^{s_1}|B_2'|^{s_2+s_3}\sum_{x\in [N]} f(x)^{s_1+1} - O_{c}(\eta M |B_1|^{s_1}|B_2'|^{s_2+s_3}N).
\end{multline*}
\end{lemma}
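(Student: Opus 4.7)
The plan is to exploit the Lipschitz continuity of $F$ to replace each shifted factor $f(x + \text{shift})$ in the product on the left by $f(x)$, incurring a controlled error, and then to sum over the configuration $(x,d,y,z)$. This is a telescoping argument on a product of $O(1)$ bounded factors, whose only subtlety is the verification that each shift is $O(\eta)$-small in the phase it induces on $\theta x$ while remaining integer-valued and of magnitude $\ll_c \eta N$.

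First I would record the phase estimates. For $d_i \in B_1$ and every coordinate $\ell$, the bound $\norm{\theta_\ell(cd_i)}_\T \leq c\eta$ is immediate from $\norm{\theta_\ell d_i}_\T \leq \eta$. For the composite shift $h := L_1(d) + (WL_2(y^2) + L_3(z))/c$ with $d \in B_1^{s_1}$ and $y \in (B_2')^{s_2}$, $z \in (B_2')^{s_3}$, I would unpack the definition of $B_2'$: the divisibility $c \mid y_j$ forces $c \mid W y_j^2/c$ (so the quadratic contribution is integer-valued), and $B_2'$ directly furnishes the phase bounds $\norm{\theta_\ell (W y_j^2/c)}_\T \leq \eta$ and $\norm{\theta_\ell (z_k/c)}_\T \leq \eta$; combined with $\norm{\theta_\ell d_i}_\T \leq \eta$ and the fact that the coefficients of $L_1, L_2, L_3$ are $O(1)$, the triangle inequality yields $\norm{\theta_\ell h}_\T \ll \eta$. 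The $M$-Lipschitz hypothesis on $F$ then gives $|f(x + h') - f(x)| \ll Mc\eta$ whenever both $x$ and $x + h'$ lie in $[N]$, for each shift $h'$ arising in the product.

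Next I would split the $x$-sum into an interior range in which every shifted argument remains in $[N]$, and a boundary range. Since each shift has magnitude $\ll_c \eta N$ (from the size bounds built into $B_1$ and $B_2'$), the boundary range has cardinality $O_c(\eta N)$, and on it the product is bounded by $1$; summing over the full configuration yields a contribution of $O_c(\eta N \cdot |B_1|^{s_1}|B_2'|^{s_2+s_3})$. On the interior range, the standard telescoping identity for products of $1$-bounded quantities, combined with the Lipschitz estimate above, replaces each $f(x + h_i)$ by $f(x)$ with an additive error $O_c(Mc\eta)$ per configuration, producing the main term $|B_1|^{s_1}|B_2'|^{s_2+s_3}\sum_{x\in[N]} f(x)^{s_1+1}$ up to the allowed error.

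The only step requiring real care — and hence the ``main obstacle'' such as it is — is the phase estimate for the quadratic term $\theta_\ell W y_j^2/c$ in the composite shift; this is precisely why the divisibility condition $c \mid x$ has been built into the quadratic Bohr set $B_2'$. Once that is in hand, everything else is a routine bookkeeping exercise, with all $c$-dependence entering only through the $c$-dependent prefactors of $\eta$, which are absorbed harmlessly into the $O_c(\cdot)$ notation on the error term.
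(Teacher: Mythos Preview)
Your proposal is correct and follows essentially the same approach as the paper: restrict $x$ to an interior range $[C\eta N, N - C\eta N]$ so that every shifted argument remains in $[N]$ (incurring an $O_c(\eta N)$ boundary error), then use the Lipschitz property of $F$ together with the phase bounds built into $B_1$ and $B_2$ to replace each shifted factor by $f(x)$. The paper's proof is terser and does not spell out the phase estimates or the role of the divisibility condition $c\mid x$ in $B_2$, but your more explicit treatment of these points is entirely in keeping with the argument.
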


\begin{proof}
Suppressing dependence on $L_i$, there exists a constant $C \ll_{c} 1$ such that if $d_i \in B_1$, $y_j, z_k \in B_2$ and
\begin{equation}\label{restricted x range}
C\eta N \leq x \leq N - C\eta N
\end{equation}
then 
$$
x + cd_1,\ \dots,\ x+ cd_{s_1},\ x +L_1(d)+\tfrac{WL_2(y^2) + L_3(z)}{c} \in [N].
$$
Restricting our summation over $x$ to \eqref{restricted x range} introduces an error of $O_{c}(\eta |B_1|^{s_1}|B_2'|^{s_2+s_3}N)$.  On restricting in this manner, each term in our summation satisfies $f(x+cd_i) = f( x)+ O_c(\eta M)$ and
$$
 f\brac{x +L_1(d)+\tfrac{WL_2(y^2) + L_3(z)}{c}} = f(x) + O(\eta M).
$$
The result follows.
\end{proof}

\begin{lemma}[$L^1$-control]
Let $L_1, L_2, L_3$ be linear forms, each in $s_i$ variables and let $S_1, S_2$ be finite sets of integers, with every element of $S_2$ divisible by $c$.
For any 1-bounded functions $f_i : \Z \to \C$ with support in $[N]$ we have the estimate
\begin{multline*}
\abs{\sum_{x\in \Z}\sum_{\substack{ d_i \in S_1 \\ y_j, z_k \in  S_2}} f_0(x) f_1(x+cd_1)\dotsm f_{s_1}(x+cd_{s_1}) f_{-1}\brac{x +L_1(d)+\tfrac{WL_2(y^2) + L_3(z)}{c}}}\\ \leq \min_i\norm{f_i}_{L^1[N]}|S_1|^{s_1}|S_2|^{s_2+s_3} .
\end{multline*}
\end{lemma}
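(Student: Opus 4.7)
The plan is to show that the multilinear sum is bounded by $\norm{f_i}_{L^1[N]}|S_1|^{s_1}|S_2|^{s_2+s_3}$ for \emph{each} index $i \in \{-1,0,1,\dots,s_1\}$ separately; taking the minimum over $i$ then yields the stated inequality. There are $s_1+2$ function factors in total, namely $f_0$ evaluated at $x$, the factors $f_i$ evaluated at $x+cd_i$ for $1\leq i\leq s_1$, and the factor $f_{-1}$ evaluated at $x+L_1(d)+\tfrac{WL_2(y^2)+L_3(z)}{c}$. The idea in each case is standard: translate in $x$ so that the $i$-th factor becomes $f_i(x)$, then bound all remaining factors by $1$ and evaluate the sums in $x$ and $(d,y,z)$ separately.

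First, the case $i=0$ is immediate: pull the absolute value inside, bound $|f_j|\leq 1$ for every $j\neq 0$, and perform the sum in $x$ first (yielding $\norm{f_0}_{L^1[N]}$) before the sums over $d_1,\dots,d_{s_1}\in S_1$ and $y_1,\dots,y_{s_2},z_1,\dots,z_{s_3}\in S_2$ (which contribute $|S_1|^{s_1}|S_2|^{s_2+s_3}$). For the case $1\leq i\leq s_1$, substitute $x\mapsto x-cd_i$ in the inner sum, so that the factor $f_i$ becomes $f_i(x)$ and every other factor is evaluated at an integer translate of $x$. Again bound the other $s_1+1$ factors by $1$, then sum over $x\in\Z$ to obtain $\norm{f_i}_{L^1[N]}$, and finally over $(d,y,z)$.

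The case $i=-1$ is handled identically, by the substitution
\[
x \mapsto x - L_1(d) - \tfrac{WL_2(y^2)+L_3(z)}{c}.
\]
For this to be a valid $\Z$-shift we need the quantity $\tfrac{WL_2(y^2)+L_3(z)}{c}$ to be an integer, which is exactly where the hypothesis that $c$ divides every element of $S_2$ enters: if $c\mid y_j$ then $c^2\mid y_j^2$, so $c\mid WL_2(y^2)/c$; and $c\mid z_k$ gives $c\mid L_3(z)$. After this shift the factor $f_{-1}$ becomes $f_{-1}(x)$ and every other factor is again an integer translate of $x$, so the same bounding and summing procedure delivers $\norm{f_{-1}}_{L^1[N]}|S_1|^{s_1}|S_2|^{s_2+s_3}$.

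There is no genuine obstacle here; the only point requiring attention is the divisibility check ensuring that the translation used in the $i=-1$ case is by an integer. Once the $s_1+2$ individual bounds are in hand, the lemma follows by taking the minimum over $i$.
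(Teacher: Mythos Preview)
Your proof is correct and follows exactly the approach the paper takes: the paper simply observes that the case $i=0$ is clear from the triangle inequality, and that the other cases follow by the same argument after a change of variables. Your treatment is more detailed, in particular spelling out why the divisibility hypothesis on $S_2$ is needed for the shift in the $i=-1$ case, but the underlying argument is identical.
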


\begin{proof}
This is clear from the triangle inequality for $i = 0$ .  The same argument applies for other values of $i$ on changing  variables.
\end{proof}

Combining our structured count with $L^1$-control, we can prove a version of the structured counting lemma which allows for small perturbations in the $L^1$-norm.  The proof follows from the standard telescoping identity
\[
\prod_i g(x_i) - \prod_i f(x_i) = \sum_i(g(x_i)-f(x_i)) \prod_{j<i} g(x_j) \prod_{k > i} f(x_k)
\]

\begin{corollary}\label{structured corollary}
Let $L_1, L_2, L_3$ be linear forms, each in $s_i$ variables. Given an $M$-Lipschitz function $F : \T^d \to [0,1]$  and $\theta \in \T^d$, define $f : [N] \to [0,1]$ by
$$
f(x) := F(\theta x), \qquad (x \in [N]).
$$ 
For fixed $0 < \eps \leq 1/2$, define the Bohr set
$$
B_1:= \set{x \in [\eps N] : \norm{\theta_i x}_\T \leq \eps/M \text{ for all } i}.
$$
For integers $c,W \geq 1$ let $B_2'$ denote a subset of the quadratic Bohr set
$$
B_2 :=\set{x \in [\eps (N/W)^{1/2}] : c \mid x \text{ and }\norm{\theta_i Wx^2/c}_\T, \bignorm{\theta_i x/c}_\T \leq \eps/M \text{ for all } i}.
$$
Then for any function $g : [N] \to [-1, 1]$ with $\norm{f-g}_{L^1[N]} \leq \eps N$ we have
\begin{multline*}
\sum_{x\in [N]}\sum_{\substack{ d_i \in B_1 \\ y_j, z_k \in  B_2'}} g(x) g(x+cd_1)\dotsm g(x+cd_{s_1}) g\brac{x +L_1(d)+\tfrac{WL_2(y^2) + L_3(z)}{c}}\\ \geq |B_1|^{s_1}|B_2'|^{s_2+s_3}\sum_{x\in [N]} f(x)^{s_1+2} - O_{c}(\eps |B_1|^{s_1}|B_2'|^{s_2+s_3}N).
\end{multline*}
\end{corollary}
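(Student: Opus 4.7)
The plan is to apply the structured counting lemma to $f$ and then pass from the $f$-count to the $g$-count via the displayed telescoping identity together with the $L^1$-control lemma.

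First I would invoke the structured counting lemma with its parameter $\eta$ taken to be $\eps/M$. The Bohr sets produced this way agree with those defined in the corollary up to widening the size constraint from $[\eps N/M]$ to $[\eps N]$; this widening is harmless, since the additional boundary loss from requiring $x$, $x+cd_i$, and $x + L_1(d) + (WL_2(y^2)+L_3(z))/c$ all to lie in $[N]$ costs only $O_c(\eps N)$ in the $x$-sum, and hence only $O_c(\eps |B_1|^{s_1}|B_2'|^{s_2+s_3} N)$ in total. After this widening, the structured lemma's error term $O_c(\eta M\cdot |B_1|^{s_1}|B_2'|^{s_2+s_3} N)$ becomes $O_c(\eps |B_1|^{s_1}|B_2'|^{s_2+s_3} N)$, and we obtain a lower bound for the $f$-count of exactly the form required.

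Next I would apply the telescoping identity displayed just above the corollary to express the difference between the $g$-count and the $f$-count as a sum of $s_1 + 2 = O(1)$ terms, each of the form
\[
\sum_{x, d, y, z} (g-f)(x_i) \prod_{j < i} g(x_j) \prod_{k > i} f(x_k),
\]
where $x_0, \dots, x_{s_1+1}$ denote the shifted arguments $x$, $x + cd_1, \dots, x + cd_{s_1}$, $x + L_1(d) + (WL_2(y^2)+L_3(z))/c$. Since $\norm{g-f}_\infty \le 2$ and $\norm{g-f}_{L^1[N]} \le \eps N$, the rescaled function $(g-f)/2$ is $1$-bounded with $L^1$-norm at most $\eps N/2$; the $L^1$-control lemma applied to each telescoped term (with $(g-f)/2$ in the distinguished slot and $1$-bounded $f$ or $g$ elsewhere) therefore produces a bound of $\eps N|B_1|^{s_1}|B_2'|^{s_2+s_3}$ per term. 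Summing over the $O(1)$ slots and combining with the $f$-count estimate gives the claimed inequality.

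The only real obstacle is bookkeeping: the structured counting lemma as stated uses a single parameter $\eta$ for both the size and frequency constraints on its Bohr sets, whereas the corollary needs these decoupled (to $\eps$ and $\eps/M$ respectively). This is resolved by the routine widening above. The $L^1$-control step itself is painless once one passes to $(g-f)/2$; no subtlety arises from $g-f$ being only $2$-bounded.
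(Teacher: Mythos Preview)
Your proposal is correct and follows the same route the paper intends: apply the structured counting lemma to $f$, then pass from the $f$-count to the $g$-count using the telescoping identity together with the $L^1$-control lemma. Your observation about decoupling the size parameter $\eps$ from the frequency parameter $\eps/M$ is exactly right and amounts to re-running the structured lemma's proof with these separated; the paper glosses over this entirely.
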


The uniform part of the decomposition afforded by the regularity lemma (Lemma \ref{arithmetic regularity}) has small Fourier coefficients. The next lemma shows that such functions make negligible contribution to the count in Theorem \ref{linear density}.

\begin{lemma}[Fourier control]\label{linear fourier control}
Let $L_1, L_2, L_3$ denote non-singular linear forms, each in $s_i$ variables with $s_1 \geq 2$ and $s_1+s_2 \geq 3$.  Let $W$ be a positive integer and suppose that $W = 1$ or $s_3 \geq 1$.  Then for any positive integer $N \geq W^3$, 1-bounded functions $f_1, \dots, f_{s_1} : [N] \to \C$
and set  $B \subset [(N/W)^{1/2}]$ we have the estimate
\begin{multline}\label{GVT}
\abs{\sum_{L_1(x) =WL_2(y^2) + L_3(z)} f_1(x_1)\dotsm f_{s_1}(x_{s_1})1_{B}(y_1)\dotsm 1_{B}(y_{s_2}) 1_{B}(z_1)\dotsm 1_{B}(z_{s_3})}\\ \ll N^{s_1 + \frac12(s_2 + s_3) -1}W^{-\frac12(s_2 + s_3)} \min_i\brac{\frac{\bignorm{\hat{f}_i}_\infty}{N}}^{1/3}.
\end{multline}
\end{lemma}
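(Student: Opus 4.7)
The strategy is to express $T$ as a Fourier integral and then apply H\"older's inequality with exponents tailored to extract the factor $(\|\hat{f}_1\|_\infty/N)^{1/3}$, invoking the $L^6$ restriction estimate for squares to control a quadratic factor in the low-$s_1$ regime and the mixed restriction estimate of \S\ref{Fourier Pseudorandomness} to handle the $W>1$ case.

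First, using $1_{m=0}=\int_0^1 e(\alpha m)\intd\alpha$, I would rewrite
\[
T = \int_0^1 \prod_{i=1}^{s_1} \hat{f}_i(c_i\alpha) \prod_{j=1}^{s_2} \widetilde{1_B}(-Wb_j\alpha) \prod_{k=1}^{s_3} \hat{1_B}(-d_k\alpha)\intd\alpha,
\]
where $c_i,b_j,d_k$ are the coefficients of $L_1,L_2,L_3$. Relabelling, assume the minimum is achieved at $i=1$. The interpolation inequality
\[
\|\hat{f}_1\|_3 \leq \|\hat{f}_1\|_\infty^{1/3}\|\hat{f}_1\|_2^{2/3} \leq \|\hat{f}_1\|_\infty^{1/3}N^{1/3}
\]
will be the single source of the factor $(\|\hat{f}_1\|_\infty/N)^{1/3}$.

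Next I would split into cases according to $s_1$ and $W$. When $s_1 \geq 3$, apply H\"older with exponents $(3,3,3,\infty,\dots,\infty)$, placing the $L^3$ norm on $\hat{f}_1,\hat{f}_2,\hat{f}_3$ and $L^\infty$ on the remainder; the trivial bounds $\|\hat{f}_i\|_\infty \leq N$ and $\|\widetilde{1_B}\|_\infty,\|\hat{1_B}\|_\infty \leq |B| \leq (N/W)^{1/2}$ collapse directly to the target, the $W^{-(s_2+s_3)/2}$ factor arising for free from the size of $B$. When $s_1=2$ and $W=1$, the hypothesis $s_1+s_2\geq 3$ forces $s_2\geq 1$, and I would apply H\"older with exponents $(3,2,6,\infty,\dots,\infty)$, placing $L^6$ on one quadratic factor $\widetilde{1_B}(b_1\alpha)$; the key ingredient is the restriction estimate $\|\widetilde{1_B}\|_6^6 \leq |B|^3\max_n r_3^B(n) \ll N^{2+o(1)}$, which follows from Gauss's bound $r_3(n)\ll n^{1/2+o(1)}$ on representations by three squares. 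When $s_1=2$ and $W>1$, the hypothesis forces $s_3\geq 1$, and the crucial move is to pair one quadratic factor with one linear factor into
\[
F(\alpha) := \widetilde{1_B}(-Wb_1\alpha)\hat{1_B}(-d_1\alpha),
\]
and apply H\"older with exponents $(3,2,6,\infty,\dots,\infty)$, with $L^6$ on $F$; the mixed restriction estimate (Lemma \ref{mixed restriction}) then delivers
\[
\|F\|_6^6 \ll (N/W)^{5}\cdot W^{-1} = N^5 W^{-6}, \qquad \text{so } \|F\|_6 \ll N^{5/6}W^{-1},
\]
which combined with the $L^\infty$ bounds on the remaining factors supplies the correct total power of $W$.

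Multiplying out the bounds in each case reproduces the right-hand side of \eqref{GVT}; any $N^{o(1)}$ loss from Gauss's bound is comfortably absorbed by the hypothesis $N\geq W^3$. The main obstacle lies in Case 3: the $L^p$ norm of $\widetilde{1_B}(W\cdot)$ is invariant under the dilation $\alpha \mapsto W\alpha$, so classical restriction for squares by itself cannot manufacture the $W$-saving, while taking $\|\hat{1_B}\|_\infty \leq |B|$ on the linear factor loses a factor of $W^{1/2}$ relative to the target. It is precisely the mixed restriction estimate --- exploiting the joint frequency structure of the quadratic and linear exponential sums at frequency ratio $W$ --- that recovers this missing $W^{1/2}$.
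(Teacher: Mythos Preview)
Your approach matches the paper's: Fourier expansion, H\"older, and interpolation $\|\hat f_1\|_3\leq\|\hat f_1\|_\infty^{1/3}\|\hat f_1\|_2^{2/3}$ to extract the factor $(\|\hat f_1\|_\infty/N)^{1/3}$, with essentially the same case split and the same need for an $L^6$ restriction input. Two points, however, do not go through as written.

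In your Case 2 ($s_1=2$, $W=1$), Gauss's pointwise bound $r_3(n)\ll n^{1/2+o(1)}$ gives only $\|\widetilde{1_B}\|_6^6\ll N^{2+o(1)}$, and your claim that the $N^{o(1)}$ is ``comfortably absorbed by the hypothesis $N\geq W^3$'' fails here: with $W=1$ that hypothesis is vacuous, and the lemma allows no $\eps$-slack in the exponent. The paper instead cites Bourgain's $\eps$-free restriction estimate $\|\widetilde{1_B}\|_6\ll N^{1/3}$. In your Case 3 you appeal to Lemma~\ref{mixed restriction}, but that lemma is established only later, via the machinery of \S\S\ref{abstract restriction section}--\ref{Fourier Pseudorandomness}, and carries $w$-smoothness hypotheses on $W$ (and an $\eta$-dependence) not assumed in the present lemma. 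The paper instead proves the needed bound $\bignorm{\widetilde{1_B}(cW\alpha)\,\hat 1_B(c'\alpha)}_6\ll N^{5/6}W^{-1}$ directly and elementarily inside this proof: expand the sixth power as the number of solutions to
\[
cW(y_1^2+y_2^2+y_3^2-y_4^2-y_5^2-y_6^2)=c'(z_1+z_2+z_3-z_4-z_5-z_6),\qquad y_i,z_j\in[(N/W)^{1/2}],
\]
observe that the right-hand side forces the common value $Wm$ to satisfy $|m|\ll (N/W)^{1/2}W^{-1}$, and for each such $m$ count the $z$'s trivially ($\ll (N/W)^{5/2}$) and the $y$'s by the sixth-moment bound for squares ($\ll (N/W)^2$). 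This self-contained counting is precisely what manufactures the extra $W^{-1}$ and avoids any forward reference.
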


\begin{proof}
Write 
$$
\tilde{1}_B(\alpha) := \sum_{x \in B} e(\alpha x^2)
$$
and
\begin{equation}\label{form notation}
L_i(x) = c_1^{(i)} x_1 + \dots + c_{s_i}^{(i)} x_{s_i}.
\end{equation}
The orthogonality relations give the identity
\begin{multline}\label{fourier0}
\sum_{L_1(x) =WL_2(y^2) + L_3(z)} f_1(x_1)\dotsm f_{s_1}(x_{s_1})1_{B}(y_1)\dotsm 1_{B}(y_{s_2}) 1_B(z_1) \dotsm 1_B(z_{s_3})\\ = \int_\T \prod_i\hat{f}_i\brac{c_i^{(1)}\alpha} \prod_j \tilde1_B\brac{Wc_j^{(2)}\alpha}\prod_k\hat1_B\brac{c_k^{(3)}\alpha}\intd\alpha.
\end{multline}

Let us first suppose that $s_1 \geq 3$. Fix distinct integers $i, j, k \in [s_1]$.  In this case we may estimate all exponential sums involving $1_B$ trivially, then employ Parseval to bound  \eqref{fourier0} by
\begin{multline*}
N^{s_1-3}(N/W)^{\frac12(s_2+s_3)} \int_\T \abs{\hat{f}_i\brac{c_i^{(1)}\alpha}}\abs{\hat{f}_j\brac{c_j^{(1)}\alpha}}\abs{\hat{f}_k\brac{c_k^{(1)}\alpha}}\intd\alpha \\
\leq N^{s_1-3}(N/W)^{\frac12(s_2+s_3)}\bignorm{\hat{f}_i}_\infty \bignorm{\hat{f}_j}_2\bignorm{\hat{f}_k}_2 \leq N^{s_1-2}(N/W)^{\frac12(s_2+s_3)}\bignorm{\hat{f}_i}_\infty.
\end{multline*}

Henceforth we assume that $s_1 = 2$, and so $s_2 \geq 1$ (since $s_1 + s_2 \geq 3$).
Let us deal with the case in which $W = 1$ and $s_3 = 0$. Then the orthogonality relations show our count equals
\begin{equation*}
\sum_{L_1(x) =L_2(y^2)} f_1(x_1) f_{2}(x_2)1_{B}(y_1)\dotsm 1_{B}(y_{s_2}) \\ = \int_\T \hat{f}_1\brac{c_1^{(1)}\alpha}\hat{f}_2\brac{c_2^{(1)}\alpha} \prod_j \tilde1_B\brac{c_j^{(2)}\alpha}\intd\alpha.
\end{equation*}
By H\"older's inequality and the trivial bound on exponential sums, the Fourier integral is at most
$$
 N^{\frac12(s_2-1)}\bignorm{\hat f_{1}}_\infty^{1/3}\bignorm{\hat f_{1}}_2^{2/3} \bignorm{\hat f_{2}}_2\bignorm{\tilde 1_B}_6.
$$
By Parseval $\bignorm{\hat f_i}_2 \leq N^{1/2}$, and by (say) Bourgain's restriction estimate \cite{BourgainLambda} we have $\bignorm{\tilde 1_B}_6 \ll N^{1/3}$ (more elementary proofs exist for the latter). The estimate \eqref{GVT} now follows in this case.

Next let us deal with the case that $W$ is arbitrary, in which case we may assume that $s_3 \geq 1$, in addition to our assumptions that $s_1 = 2$ and $s_2 \geq 1$.   
As above, H\"older's inequality allows us to bound \eqref{fourier0} by
$$
 (N/W)^{\frac12(s_2+s_3-2)}\bignorm{\hat f_{1}}_\infty^{1/3}\bignorm{\hat f_{1}}_2^{2/3} \bignorm{\hat f_{2}}_2\bignorm{\tilde 1_B(c_1^{(2)}W\alpha)\hat 1_B(c_1^{(3)}\alpha)}_6.
$$
 Hence it suffices to prove that for non-zero integers $c$ and $c'$ we have the estimate
\begin{equation}\label{l6 estimate}
\bignorm{\tilde 1_B(cW\alpha)\hat 1_B(c'\alpha)}_6 \ll_{c, c'} N^{5/6}W^{-1}.
\end{equation}

By orthogonality, the sixth power of the norm in \eqref{l6 estimate} is bounded above by the number of solutions to the equation
\begin{equation}\label{6th moment equation}
cW(y_1^2 + y_2^2+y_3^2 - y_4^2 - y_5^2 - y_6^2)= c'(z_1 + z_2 + z_3 - z_4 - z_5 - z_6), \quad \brac{y_i, z_j \in [(N/W)^{1/2}]}.
\end{equation}
Suppose that both sides of \eqref{6th moment equation} are equal to $W m$ for some $m \in \Z$.  Then the size constraints on the right-hand side 
 force $|m| \ll_{c'} (N/W)^{1/2}W^{-1}$.  Hence there are at most $O_{c'}((N/W)^{1/2}W^{-1} +1)$ choices for $m$, and given this choice there are at most $(N/W)^{\frac52}$ choices for $(z_1, \dots, z_6)$. Furthermore, by orthogonality the number of choices for $(y_1, \dots, y_6)$ is at most
$$
\int_\T \abs{\tilde{1}_{[(N/W)^{1/2}]}(c\alpha)}^6e(\alpha m) \intd\alpha \ll \int_\T \abs{\tilde{1}_{[(N/W)^{1/2}]}(\alpha)}^6\intd\alpha \ll (N/W)^2,
$$
the latter following from (say) Bourgain's restriction estimate \cite{BourgainLambda} (again, more elementary proofs exist).  The required estimate \eqref{l6 estimate} follows.
\end{proof}

\begin{lemma}[Quadratic Bohr set bound]\label{bohr set bound}
Let $0 < \eta \leq 1/2$ and $\alpha, \beta \in \T^d$.  Then for any positive integer $N $, either $N \ll_{\eta, d} 1$ or
$$
\hash\set{x \in [N] : \norm{\alpha_i x^2}_\T, \bignorm{\beta_i x}_\T \leq \eta \text{ for all } i} \gg_{\eta, d}  N.
$$
\end{lemma}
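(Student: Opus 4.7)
The plan is Fourier analytic. First, I would lower-bound the indicator $\mathbf{1}_{\|\cdot\|_\T \leq \eta}$ on $\T$ by a smooth non-negative bump $\psi$ of width $\eta$, integral $\asymp \eta$, and rapidly-decaying Fourier coefficients, reducing the problem to lower-bounding
\[
S := \sum_{x=1}^N \prod_{i=1}^d \psi(\alpha_i x^2)\,\psi(\beta_i x).
\]
Fourier-expanding each $\psi$ and interchanging summations yields a weighted sum, indexed by $(n,m) \in \Z^{2d}$, of Weyl exponentials $W_N(A,B) := \sum_{x=1}^N e(Ax^2 + Bx)$, with $A = \sum_i n_i \alpha_i$ and $B = \sum_i m_i \beta_i$. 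The zero-frequency term contributes $\hat\psi(0)^{2d} N \asymp \eta^{2d} N$, which serves as the main term.

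Next I would proceed by a dichotomy based on Dirichlet's simultaneous approximation theorem. Either the vector $(\alpha_1, \dots, \alpha_d, \beta_1, \dots, \beta_d)$ admits a common rational approximation $(a_i/q, b_j/q)$ with denominator $q = O_{\eta,d}(1)$ to high accuracy, or it does not. In the latter minor-arc case, Weyl's inequality delivers a uniform power-saving estimate $|W_N(A,B)| \ll N^{1-c(d)}$ for every relevant non-zero frequency, of which the Fourier decay of $\psi$ restricts the effective number to $O_{\eta,d}(1)$; the total error is thus $o(\eta^{2d} N)$ once $N$ exceeds a threshold depending on $\eta$ and $d$. In the major-arc case, the sequences $\alpha_i x^2$ and $\beta_i x$ are approximately periodic modulo $q$: writing $x = qu + r$ with $u \in [0, N/q]$ and $r \in [0, q)$, the phases simplify modulo $1$ to $a_i r^2 / q$ and $b_j r/q$ up to a negligible error, and counting good residues $r$ by an elementary Fourier argument on the finite cyclic group $\Z/q\Z$ (where Weyl obstructions disappear) yields $\gg_{\eta, d} q$ such residues, each contributing $\asymp N/q$ elements of the Bohr set for a total of $\gg_{\eta, d} N$.

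The main obstacle is calibrating this dichotomy so that no uniform pointwise bound on Weyl sums is required in $\alpha, \beta$: the two regimes complement each other precisely because Weyl bounds degenerate exactly when the phases approach small-denominator rationals, and in that regime the Bohr set acquires an explicit periodic structure amenable to direct counting. Modulo this calibration (and the observation that for $q \leq q_0(\eta,d)$ the trivial residue $r = 0$ alone already contributes $\geq N/q_0 \gg_{\eta,d} N$ elements), the two cases combine to yield the claimed lower bound.
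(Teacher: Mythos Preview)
The paper's proof is a one-line citation to Exercise~1.1.23 of Tao's \emph{Higher order Fourier analysis}, which establishes polynomial recurrence on tori via an inductive equidistribution argument. Your attempt at a self-contained Fourier-analytic proof has a genuine gap in the minor-arc half of the dichotomy.

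You assert that if $(\alpha_1,\dots,\alpha_d,\beta_1,\dots,\beta_d)$ admits no good simultaneous rational approximation with denominator $q=O_{\eta,d}(1)$, then Weyl's inequality gives $|W_N(A,B)|\ll N^{1-c(d)}$ for every relevant non-zero frequency. But Weyl's inequality is governed by the Diophantine properties of the \emph{quadratic coefficient} $A=\sum_i n_i\alpha_i$, and this particular linear combination can be rational even when the individual $\alpha_i$ are badly approximable. For instance, with $d=2$, $\alpha_1=\sqrt{2}$, $\alpha_2=-\sqrt{2}$, the pair is far from any bounded-denominator rational point, yet the frequency $(n_1,n_2)=(1,1)$ gives $A=0$ and hence no Weyl saving at all. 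Such frequencies contribute terms of size $\asymp N$ to your expansion, comparable to the main term $\hat\psi(0)^{2d}N$, and nothing in your argument prevents them from cancelling it. Separately, your major-arc case requires accuracy of order $N^{-2}$ in the $\alpha_i$ for the phases to reduce to $a_i r^2/q$; Dirichlet with $q=O_{\eta,d}(1)$ does not deliver this, so the two cases you describe are not exhaustive.

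The dichotomy that actually works, and which underlies Tao's exercise, is iterative and runs frequency-by-frequency: either every non-zero low-height character $k$ has a small Weyl sum (so the orbit is quantitatively equidistributed and the Bohr set has density $\asymp(2\eta)^{2d}$), or some specific $k$ satisfies $\|k\cdot\alpha\|_\T\ll N^{-2}$, exhibiting a near-rational linear relation among the $\alpha_i$. One then passes to a subprogression along which this relation trivialises part of the quadratic phase, reducing the effective dimension or degree, and repeats. Your observation that the residue $r=0$ alone already yields $\gg_{\eta,d}N$ elements once $q=O_{\eta,d}(1)$ is essentially the terminal step of this induction, but reaching it requires the intermediate structural reductions that your global dichotomy skips.
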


\begin{proof}
This follows from Tao \cite[Ex.1.1.23]{TaoHigher}.
\end{proof}

We now begin our proof of Theorem \ref{linear density} in earnest.   Some of our summations become cleaner if we view functions $f : [N] \to \C$ as functions $f: \Z \to \C$ which are equal to zero outside of $[N]$. We first prove Theorem \ref{linear density} under the assumption that we have a colouring $C_1\cup \dots \cup C_r = [(N/W)^{1/2}]$ of the full interval. We deduce the stated version subsequently.

We apply Lemma \ref{arithmetic regularity} to the indicator functions of the sets $A_i$ for $i =1, \dots, r$.  The precise values of $\eps$ and $\mathcal{F}$ in our use of this lemma are to be determined.  In this way we obtain $M \ll_{\eps, \mathcal{F},r} 1$ and decompositions
$$
1_{A_i} = f_i^\mathrm{str} + f_i^\mathrm{sml} + f_i^\mathrm{unf} \qquad (1 \leq i \leq r)
$$
which satisfy the conclusions of the arithmetic regularity lemma.  In particular, there exists $d\leq M$ and $\theta \in \T^d$, such that for each $i$ there is an $M$-Lipschitz function $F_i : \T^d \to [0,1]$ with 
$f_i^\mathrm{str}(x) = F_i( \theta x)$ for all $x \in [N]$. Since $f_i^\mathrm{str}$ has the same mean as $1_{A_i}$, we have
$$
\sum_{x \in [N]}f_i^\mathrm{str}(x)^{s_1} \geq \recip{N^{s_1 -1}}\brac{\sum_{x \in [N]}f_i^\mathrm{str}(x)}^{s_1} \geq \delta^{s_1} N.
$$
Write 
$$
L_1(x) = c_1 x_1 + \dots + c_{s_1} x_{s_1},
$$
and set
$$
\tilde{L}_1(d_3, \dots, d_{s_1}) = -\brac{c_3d_3 + \dots + c_{s_1}d_{s_1}}.
$$
Taking $B_1$ and $B_2$ as in Corollary \ref{structured corollary}, with $c:= c_1$, we deduce that for $g_i := f_i^\mathrm{str} + f_i^\mathrm{sml}$ and $B_2'\subset B_2$ we have
\begin{multline*}
\sum_{x\in [N]}\sum_{\substack{ d_i \in B_1 \\ y_j, z_k \in  B_2'}} g_i(x) g_i(x+c_1d_3)\dotsm g_i(x+c_1d_{s_1}) g_i\brac{x +\tilde{L}_1(d)+\tfrac{WL_2(y^2) + L_3(z)}{c_1}}\\ \geq \delta^{s_1}|B_1|^{s_1-2}|B_2'|^{s_2+s_3}N - O(\eps |B_1|^{s_1-2}|B_2'|^{s_2+s_3}N).
\end{multline*}
Hence we may take $\eps$ satisfying 
$
\eps^{-1} \ll \delta^{-s_1}
$
and ensure that
\begin{equation*}
\sum_{x\in [N]}\sum_{\substack{ d_i \in B_1 \\ y_j, z_k \in  B_2'}} g_i(x) g_i(x+c_1d_1)\dotsm g_i(x+c_1d_{s_1}) g_i\brac{x +\tilde{L}_1(d)+\tfrac{WL_2(y^2) + L_3(z)}{c_1}}\\ \gg \delta^{s_1}|B_1|^{s_1-2}|B_2'|^{s_2+s_3}N .
\end{equation*}

By the pigeon-hole principle, there exists a colour class $C_j$ satisfying \begin{equation}\label{PHP lower bound}
|C_j \cap B_2| \geq |B_2|/r.
\end{equation} 
 We observe that $C_j$ depends only on the Bohr set $B_2$, and not on the Lipschitz function $F_i$. On setting $B_2' := C_j \cap B_2$ and employing Lemma \ref{bohr set bound}, we deduce that
\begin{equation*}
\sum_{x\in [N]}\sum_{\substack{ d_i \in B_1 \\ y_j, z_k \in  B_2'}} g_i(x) g_i(x+c_1d_1)\dotsm g_i(x+c_1d_{s_1}) g_i\brac{x +\tilde{L}_1(d)+\tfrac{WL_2(y^2) + L_3(z)}{c_1}}\\ \gg_{\delta, r, M} N^{s_1-1}(N/W)^{\frac12(s_2+s_3)} .
\end{equation*}

Notice that under the assumption that $c_1 \mid y_j$ and $c_1 \mid z_k$, we obtain an integer solution to the equation $L_1(x) = WL_2(y^2)+ L_3(z)$ on setting
\begin{equation*}
x_1 := x + \tilde{L}_1(d) + \tfrac{WL_2(y^2) + L_3(z)}{c_1},\qquad
x_2 := x,\qquad x_3 := x + c_1d_3,\qquad \dots\ ,\qquad  x_{s_1}:= x + c_1 d_{s_1}.
\end{equation*}
Using the non-negativity of $g_i := f_i^\mathrm{str} + f_i^\mathrm{sml}$, we deduce that for $C = C_j$ satisfying \eqref{PHP lower bound} we have
\begin{equation}\label{g_i lower bound}
\sum_{\substack{L_1(x) = WL_2(y^2) + L_3(z)\\ y_j, z_k \in C\cap [(N/W)^{1/2}]}} g_i(x_1) \dotsm g_i(x_{s_1})  \gg_{\delta, r, M} N^{s_1-1}(N/W)^{\frac12(s_2+s_3)} .
\end{equation}
Employing Lemma \ref{linear fourier control} and a telescoping identity then gives
\begin{multline*}
\sum_{\substack{L_1(x) = WL_2(y^2) + L_3(z)\\ y_j, z_k \in C\cap [(N/W)^{1/2}]}} 1_{A_i}(x_1) \dotsm 1_{A_i}(x_{s_1}) = \sum_{\substack{L_1(x) = WL_2(y^2) + L_3(z)\\ y_j, z_k \in C\cap [(N/W)^{1/2}]}} g_i(x_1) \dotsm g_i(x_{s_1}) \\ + O\brac{ N^{s_1-1}(N/W)^{\frac12(s_2+s_3)} \mathcal{F}(M)^{-1/3}}
\end{multline*}
Hence taking $\mathcal{F}(M)$ sufficiently large in terms of the implicit constant in \eqref{g_i lower bound}, we conclude that for the colour class $C = C_j$ satisfying \eqref{PHP lower bound} we have
\begin{equation}\label{full colour lower bound}
\sum_{\substack{L_1(x) = WL_2(y^2) + L_3(z)\\ y_j, z_k \in C\cap [(N/W)^{1/2}]}} 1_{A_i}(x_1) \dotsm 1_{A_i}(x_{s_1}) \gg_{\delta, r} N^{s_1-1}(N/W)^{\frac12(s_2+s_3)}.
\end{equation}

This completes the proof of Theorem \ref{linear density} under the assumption that our colouring is of the full interval $C_1 \cup \dots \cup C_r = [(N/W)^{1/2}]$.  Notice that if $s_2+s_3 = 0$ then this vacuously implies the stated version of Theorem \ref{linear density}.  Let us therefore assume that $s_2 + s_3 > 0$.  Let $\eta$ denote the implicit constant in \eqref{full colour lower bound} divided through by $2(s_2+s_3)$.  Since the inverse image of the linear form $L_1$ has size at most $N^{s_1-1}$ in $[N]^{s_1}$, the number of solutions to the equation $L_1(x) = WL_2(y^2) + L_3(z)$ with $x_i \in [N]$, $y_j, z_k \in [(N/W)^{1/2}]$ and either $y_j \leq \eta (N/W)^{1/2}$ for some $j$ or $z_k \leq \eta (N/W)^{1/2}$ for some $k$ is at most
\begin{equation*}
 (s_2+s_3)\eta N^{s_1-1}(N/W)^{\frac12(s_2+s_3)}.
\end{equation*}
It follows that the bound \eqref{full colour lower bound} remains valid under the assumption that $$C_1 \cup \dots \cup C_r = [\eta (N/W)^{1/2}, (N/W)^{1/2}],$$ as required to prove Theorem \ref{linear density} in full generality.

\section{An abstract restriction estimate}\label{abstract restriction section}

To prove the quadratic counting theorem (Theorem \ref{quadratic form satisfies rado}) we would like to prove an analogue of the Fourier control lemma (Lemma \ref{linear fourier control}), which was key to our proof of the linear counting theorem (Theorem \ref{linear form satisfies rado}). The main ingredients in our proof of the Fourier control lemma were H\"older's inequality and estimates for the $L^p$-norm of certain exponential sums. In order to prove an analogous result for our quadratic counting theorem (see Lemma \ref{fourier control}) we require four distinct $L^p$-estimates, each of which involves the product of two distinct exponential sums (see Lemma \ref{mixed restriction}). We term these \emph{mixed restriction estimates}. To avoid repetition, we begin by proving an abstract restriction estimate, then verify that the four exponential sums of relevance satisfy the hypotheses of this theorem.

In the following $[-N, N]$ denotes an interval of integers.

\begin{definition}[Major arc hypothesis]\label{major hyp}
We say that $\nu: [-N,N] \to [0, \infty)$ satisfes a \emph{major arc hypothesis with constant $K$} if for all $1\leq a \leq q \leq Q$ with $\hcf(a, q) = 1$ and $\bignorm{\alpha - \tfrac{a}{q}}_\T\leq Q/N$ we have
\begin{equation}\label{major arc bound}
\frac{| \hat\nu(\alpha)|}{\norm{\nu}_1} \leq K q^{-1}  \max\set{1, \bignorm{\alpha - \tfrac{a}{q}}_\T N}^{-1} +\ Q^{O(1)}N^{-\Omega(1)}.
\end{equation}
If $K = O(1)$ then we simply say that $\nu$ satisfes a \emph{major arc hypothesis}.
\end{definition}

\begin{definition}[Minor arc hypothesis]\label{minor hyp} 
We say that $\nu : [-N,N] \to [0, \infty)$ satisfies a \emph{minor arc hypothesis} if for any $\delta > 0$ we have the implication
\[
|\hat\nu(\alpha)| \geq \delta \norm{\nu}_1 \quad \implies \quad \sqbrac{\exists q \ll \delta^{-O(1)} \text{ such that } \norm{q\alpha}_\T \ll\delta^{-O(1)}/N}.
\]
Here it is implicitly understood that $q$ is a positive integer.
\end{definition}

\begin{definition}[Hua-type hypothesis]\label{hua hyp}
We say that $\nu : [-N,N] \to [0, \infty)$ satisfies the \emph{Hua-type hypothesis with exponent $\eps$} if we have the bound
\begin{equation*}
\norm{\nu}_2 \leq \norm{\nu}_1 N^{\eps -1}.
\end{equation*}
\end{definition}

\begin{theorem}[Restriction estimate]\label{abstract restriction}  
Let $\nu : [-N,N] \to [0, \infty)$ satisfy the major and minor arc hypotheses, the major arc hypothesis with constant $K$. Given $p > 2$ there exists\footnote{The value of $\eps$ depends on the implicit constants in the major and minor arc hypotheses. However, these are absolute constants in our applications.} $\eps = \Omega_p(1)$ such that if $\nu$ satisfies the Hua-type hypothesis with exponent $\eps$, then for any 1-bounded function $\phi : [-N,N] \to \C$ we have
$$
\int_\T  \abs{\widehat{\phi\nu}(\alpha)}^p \intd\alpha \ll_{K,p}     \norm{\nu}_1^{p}N^{-1}.
$$
\end{theorem}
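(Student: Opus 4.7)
The plan is to follow the $TT^*$ strategy as abstracted by Bourgain in \cite{BourgainLambda}, converting the restriction estimate into a large-values inequality for $\widehat{\phi\nu}$ and combining it with a Plancherel bound from the Hua hypothesis. Normalising, let $\Psi := \widehat{\phi\nu}/\norm{\nu}_1$, so $|\Psi| \leq 1$ pointwise, and for $0 < \mu \leq 1$ set $E_\mu := \set{\alpha \in \T : |\Psi(\alpha)| > \mu}$. Since $\int_\T |\widehat{\phi\nu}|^p \intd\alpha = p\norm{\nu}_1^p \int_0^1 \mu^{p-1} |E_\mu| \intd\mu$, it suffices to prove two complementary bounds on $|E_\mu|$. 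The first is the easy Parseval/Hua bound: $|\phi|\leq 1$ gives $\int_\T |\widehat{\phi\nu}|^2 \intd\alpha \leq \norm{\nu}_2^2 \leq \norm{\nu}_1^2 N^{2\eps-2}$, so Chebyshev yields $|E_\mu| \leq \mu^{-2} N^{2\eps-2}$.

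The second bound is the $TT^*$ large-values inequality, and is where both the major and minor arc hypotheses enter. I would extract from $E_\mu$ a maximal $1/N$-separated subset $\set{\alpha_1,\dots,\alpha_M}$ of size $M \gg N|E_\mu|$, pick unimodular scalars $c_j$ with $c_j\widehat{\phi\nu}(\alpha_j) = |\widehat{\phi\nu}(\alpha_j)|$, and apply Cauchy--Schwarz in the weighted norm $\ell^2(\nu)$ (using $|\phi| \leq 1$) to obtain $M^2\mu^2\norm{\nu}_1 \leq \bigabs{\sum_{j,k} c_j\bar c_k \hat\nu(\alpha_j-\alpha_k)}$. The diagonal contributes $M\norm{\nu}_1$; for the off-diagonal terms I would split according to whether $\alpha_j - \alpha_k$ admits a rational approximation with small denominator. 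With the threshold $\delta = \mu^2/4$, the minor arc hypothesis gives $|\hat\nu(\alpha_j-\alpha_k)| < \delta\norm{\nu}_1$ for non-major-arc differences, contributing at most $\tfrac12 M^2\mu^2\norm{\nu}_1$, which is absorbed into the left-hand side. For pairs with $\alpha_j - \alpha_k$ close to some $a/q$ with $q \leq \delta^{-O(1)}$, the major arc hypothesis bounds $|\hat\nu|$ by $Kq^{-1}\max(1,N\norm{\alpha_j-\alpha_k-a/q}_\T)^{-1}\norm{\nu}_1$, and a dyadic count exploiting the $1/N$-separation (for each $j$ and each $a/q$, the number of $k$ with $N\norm{\alpha_k-\alpha_j-a/q}_\T \in [2^t, 2^{t+1}]$ is $O(2^t)$) yields a total major-arc contribution of at most $K^{O(1)} M \mu^{-O(1)}\norm{\nu}_1$. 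Solving these inequalities for $M$ gives $M \ll K^{O(1)} \mu^{-C_0}$ for some absolute constant $C_0 > 2$, hence $|E_\mu| \ll K^{O(1)}\mu^{-C_0}/N$.

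Finally I would interpolate the two bounds by splitting the $\mu$-integral at the crossover $\mu_* := (K^{O(1)}N^{1-2\eps})^{1/(C_0-2)}$, using Parseval for $\mu < \mu_*$ and the large-values bound for $\mu \geq \mu_*$. Routine computation of the two dyadic pieces yields the target $\int_0^1 \mu^{p-1}|E_\mu| \intd\mu \ll_{K,p} N^{-1}$ provided $\eps$ is chosen as some positive constant depending only on $p$ (and on $C_0$, which is absolute); the arithmetic works out whenever $\eps$ is small enough that $(1-2\eps)(p-2)/(C_0-2) + 2\eps - 2 \leq -1$. The main obstacle is the $TT^*$ step: the dyadic book-keeping on the major arcs must be done carefully to give a polynomial-in-$\mu^{-1}$ bound on $M$, and the minor-arc contribution must be absorbed via the right choice of $\delta \sim \mu^2$. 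Once $C_0$ has been pinned down by this bookkeeping, the remaining interpolation step is routine.
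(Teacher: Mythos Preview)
Your discrete $TT^*$ route (extract $1/N$-separated points from $E_\mu$, then bound $\sum_{j,k}|\hat\nu(\alpha_j-\alpha_k)|$ by a major/minor arc split) is a legitimate variant of Bourgain's strategy, but the large-values inequality it yields is too crude to cover the full range $p>2$. Your dyadic count over each major arc near $a/q$ gives $\sum_t O(2^t)\cdot Kq^{-1}2^{-t}\asymp Kq^{-1}\log Q$; summing over all $a/q$ with $q\le Q$ contributes $KQ\log Q$ per index $j$, so $M\mu^2\ll KQ\log Q$. Since the minor arc hypothesis forces $Q\asymp\delta^{-A}\asymp\mu^{-2A}$ for the hidden constant $A$ in Definition~\ref{minor hyp}, you get $C_0\ge 2+2A$, a fixed constant that may be much larger than $2$. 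If you now unpack your interpolation condition $(1-2\eps)(p-2)/(C_0-2)+2\eps-2\le-1$, writing $x=2\eps-1$ it becomes $x(C_0-p)\le 0$. In the regime that matters for the paper's applications (where the verified Hua bound is $\|\nu\|_2^2\le\|\nu\|_1^2 N^{\eps-1}$, i.e.\ effective Hua exponent $>1/2$) this forces $p\ge C_0$. So for $p\in(2,C_0)$ your argument has a genuine gap; the interpolation does not close it.

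The paper obtains instead the near-optimal distributional bound $\meas(E_\delta)\ll_{K,\eps'}N^{-1}\delta^{-2-\eps'}$ for \emph{every} $\eps'>0$ (Lemma~\ref{quad lin-quad level set estimate}), which is what allows $p$ to approach $2$. After the continuous Tomas--Stein step $\delta^2\|\nu\|_1\meas(E_\delta)^2\le\iint_{E_\delta^2}|\hat\nu(\alpha_1-\alpha_2)|$, rather than summing $|\hat\nu|$ directly over major-arc differences, the paper (i) applies H\"older against the probability measure $F_N(\beta)\,1_{E_\delta\times E_\delta}/\meas(E_\delta)^2$ to pass from $|\hat\nu|$ to $|\hat\nu|^s$ with $s\in(1,2]$, which makes the ``width'' integral $\int\max(1,N\|\beta\|)^{-s}\,\mathrm{d}\beta$ convergent; (ii) moves to the Fourier side via Parseval, where the major-arc kernel has $n$th coefficient $\ll d(n,Q)/((s-1)N)$; and (iii) applies a further H\"older with exponent $B$ together with Bourgain's divisor bound $\sum_{|n|\le N}d(n,Q)^B\ll_{\eps',B}Q^B+Q^{\eps'}N$. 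Taking $B$ large and $s=1+1/B$ is exactly what drives the level-set exponent from a fixed $C_0$ down to $2+\eps'$. This H\"older--Parseval--divisor mechanism is the missing ingredient in your dyadic counting; without it the loss of $Q\asymp\mu^{-2A}$ from the minor-arc threshold cannot be removed.
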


Our proof of Theorem \ref{abstract restriction} follows Bourgain's distributional approach \cite{BourgainLambda}, which has a nice exposition due to Henriot and Hughes \cite{HenriotHughesRestriction}.

\begin{lemma}[Distributional estimate]\label{quad lin-quad level set estimate}
Let $\nu : [-N,N] \to [0, \infty)$ satisfy the major and minor arc hypotheses, the major arc hypothesis with constant $K$.
Given\footnote{Our assumption that $\delta \leq 1/2$ is a convenience which allows us to replace bounds of the form $O(\delta^{-O(1)})$ with  $\delta^{-O(1)}$.} $0< \delta \leq 1/2$ and a 1-bounded function $\phi : [-N,N] \to \C$,  define
\begin{equation}\label{quad level set}
E_\delta( \phi, \nu) := \set{\alpha \in \T : \abs{\widehat{\phi\nu}(\alpha)}> \delta\norm{\nu}_1 }.
\end{equation}
Then for any  $\eps > 0$, either\footnote{The careful reader will observe that the implicit constants in our conclusion depend on the implicit constants in our major/minor arc hypotheses.} $N \leq \delta^{-O_{\eps}(1)}$ or 
$$
\meas\bigbrac{ E_\delta( \phi, \nu)}\ll_{K, \eps } N^{-1} \delta^{-2-\eps}.
$$
\end{lemma}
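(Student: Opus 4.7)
The plan is to run Bourgain's distributional argument from \cite{BourgainLambda}, following the clean presentation in \cite{HenriotHughesRestriction}. First I would dualise: let $\Lambda \subset E_\delta(\phi, \nu)$ be a maximal $\tfrac{1}{N}$-separated subset, so $\mathcal{K} := |\Lambda| \gg N \cdot \meas\bigbrac{E_\delta(\phi,\nu)}$, and pick unimodular weights $c_\alpha$ with $c_\alpha \overline{\widehat{\phi\nu}(\alpha)} = |\widehat{\phi\nu}(\alpha)|$. Setting $g(n) := \sum_{\alpha \in \Lambda} c_\alpha e(-\alpha n)$, the definition of $E_\delta$ together with a $\nu$-weighted Cauchy--Schwarz (using $|\phi|\le1$) gives the bilinear inequality
\[
\mathcal{K}^2 \delta^2 \norm{\nu}_1 \le \sum_n \nu(n)|g(n)|^2 = \sum_{\alpha, \beta \in \Lambda} c_\alpha \bar c_\beta \hat\nu(\beta - \alpha).
\]
Isolating the diagonal contribution $\mathcal{K}\norm{\nu}_1$ either produces $\mathcal{K}\ll\delta^{-2}$ directly, or reduces matters to bounding the off-diagonal sum $\sum_{\alpha \ne \beta} |\hat\nu(\beta - \alpha)|$.

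For this off-diagonal sum I would split $\gamma := \beta - \alpha$ into major arcs $\mathfrak{M}(Q) := \bigcup_{q \le Q} \bigcup_{(a,q)=1} \set{\norm{\gamma - a/q}_\T \le Q/N}$ and their complement $\mathfrak{m}(Q)$, with $Q = Q(\delta,\eps)$ to be optimised. On $\mathfrak{m}(Q)$, the contrapositive of the minor arc hypothesis gives $|\hat\nu(\gamma)| \ll Q^{-\tau}\norm{\nu}_1$ for some absolute $\tau > 0$, contributing at most $\mathcal{K}^2 Q^{-\tau}\norm{\nu}_1$. On $\mathfrak{M}(Q)$, the major arc hypothesis delivers the decay $|\hat\nu(\gamma)|\le Kq^{-1}(1 + \norm{\gamma - a/q}_\T N)^{-1}\norm{\nu}_1$ up to a $Q^{O(1)}N^{-\Omega(1)}\norm{\nu}_1$ error, which is negligible under the standing hypothesis $N \ge \delta^{-O_\eps(1)}$. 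The $\tfrac{1}{N}$-separation of $\Lambda$ caps the admissible $\beta$ near each rational $a/q$ at $O(Q)$ with $\tfrac{1}{N}$-spaced offsets, so the $\beta$-sum telescopes to $O(q^{-1}\log Q \norm{\nu}_1)$; summing over $(a,q)$ with $q \le Q$ and over $\alpha \in \Lambda$ gives a total major-arc contribution of $O(K\mathcal{K} Q \log Q \norm{\nu}_1)$.

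Choosing $Q = \delta^{-(2+\eta)/\tau}$ for a small $\eta = \eta(\eps) > 0$ forces the minor-arc contribution below $\tfrac12 \mathcal{K}^2\delta^2\norm{\nu}_1$, so the surviving inequality $\mathcal{K}\delta^2 \ll 1 + K\delta^{-(2+\eta)/\tau}\log(1/\delta)$ rearranges to $\mathcal{K} \ll_{K,\eps} \delta^{-2-\eps}$. Dividing by $N$ delivers the claim, and the constraint $N \ge \delta^{-O_\eps(1)}$ is exactly what is needed to absorb the error term in the major-arc hypothesis.

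The main obstacle is pushing the exponent in $\delta$ arbitrarily close to $2$: the $\log Q$ telescoping (rather than a trivial factor of $Q$) in the major-arc bound is essential, and the $q^{-1}$ decay must be leveraged carefully against the $\sum_{q \le Q}\phi(q) \asymp Q^2$ count of low-denominator rationals. If the one-shot argument falls short of the target exponent, one can iterate, feeding the resulting distributional estimate back into the bilinear inequality to refine the off-diagonal count and drive the exponent down further.
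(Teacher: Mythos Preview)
Your bilinear setup via a $1/N$-separated net is equivalent to the paper's continuous Tomas--Stein inequality, and the minor-arc treatment is fine. The gap is in the major-arc bound and the conclusion you draw from it. Your per-$\alpha$ major-arc sum is $\sum_{q\le Q}\sum_{(a,q)=1} Kq^{-1}\log Q \asymp KQ\log Q$, so after choosing $Q=\delta^{-(2+\eta)/\tau}$ to absorb the minor arcs you get $\mathcal{K}\delta^2\ll 1+KQ\log Q$, hence $\mathcal{K}\ll_K \delta^{-2-(2+\eta)/\tau}\log(1/\delta)$, \emph{not} $\delta^{-2-\eps}$. Since $\tau$ is a fixed absolute constant inherited from the minor-arc hypothesis (and may be small), the exponent $2+2/\tau$ is bounded away from $2$ and cannot be driven to $2+\eps$ by shrinking $\eta$. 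The iteration you sketch is not a fix as stated: feeding $\mathcal{K}\ll\delta^{-C_0}$ back into the bilinear inequality does not obviously refine the major-arc pair count, because the bound you have is already linear in $\mathcal{K}$ on that side.

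The paper avoids this loss of $Q$ by a genuinely different device on the major arcs. After H\"older to pass to $|\hat\nu|^p$ for some $p\in(1,2]$, the major-arc hypothesis produces the kernel $\mu_{Q,p}(\alpha)=\sum_{q\le Q}\sum_a q^{-1}\max\{1,\|\alpha-a/q\|_\T N\}^{-p}$, and the key observation is that its Fourier coefficients satisfy $|\hat\mu_{Q,p}(n)|\ll d(n,Q)/((p-1)N)$ with $d(n,Q)=\#\{q\le Q:q\mid n\}$. Parseval then moves the estimate to the $n$-side, where a further H\"older with exponent $B$ together with Bourgain's divisor bound $\sum_{|n|<N}d(n,Q)^B\ll_{B,\eps}Q^B+Q^\eps N$ converts the $Q$ loss into a $Q^{1/(B-1)}$ loss. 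Taking $B\asymp 1/\eps$ and $p=1+1/B$ (and using $N\ge\delta^{-O_\eps(1)}$ to handle the $Q^B$ term) yields the exponent $2+\eps$. This dual-side divisor argument is the essential ingredient your outline is missing.
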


\begin{proof}[Proof of Theorem \ref{abstract restriction} given Lemma \ref{quad lin-quad level set estimate}]
Let $E_\delta$ be as in \eqref{quad level set} with $0 < \delta\leq 1/2$. By Lemma \ref{quad lin-quad level set estimate} with $\eps = \frac{p}{2}-1$, either $N \leq \delta^{-O_{ p}(1)}$ or 
$$
\meas(E_\delta) \ll_{K,p} N^{-1} \delta^{-1-\frac{p}{2}} .
$$
It follows that there exists $\Delta \leq N^{-\Omega_{p}(1)}$ such that for any $\delta \in (\Delta,1/2]$ we have
\begin{equation}\label{quad delta level set}
\meas(E_{\delta}) \ll_{K, p}N^{-1}\delta^{-1-\frac{p}{2}}.
\end{equation}
By dyadic decomposition
\begin{equation*}
\int_\T  \abs{\widehat{\phi\nu}(\alpha)}^p \intd\alpha \leq \int_{\T\setminus E_{\Delta}}  \abs{\widehat{\phi\nu}(\alpha)}^p \intd\alpha  + \sum_{1\leq j < \log_2(1/\Delta)} \int_{E_{2^{-j}}\setminus E_{2^{1-j}}} \abs{\widehat{\phi\nu}(\alpha)}^p \intd\alpha.
\end{equation*}

Since $\Delta \leq N^{-\Omega_{p}(1)}$, we can take $\eps = \eps(p)$ sufficiently small in our Hua-type hypothesis (Definition \ref{hua hyp}) to deduce that 
\begin{align*}
\int_{\T\setminus E_\Delta}  \abs{\widehat{\phi\nu}(\alpha)}^p \intd\alpha \leq(\Delta \norm{\nu_1}_1)^{p-2}  \int_\T  \abs{\widehat{\phi\nu}(\alpha)}^2 \intd\alpha
&\leq  \Delta^{p-2} \norm{\nu}_1^p N^{\eps-1} \leq \norm{\nu}_1^p N^{-1}.
\end{align*}

By \eqref{quad delta level set} we have
 \begin{align*}
\sum_{1\leq j < \log_2(1/\Delta)} \int_{E_{2^{-j}}\setminus E_{2^{1-j}}} \abs{\widehat{\phi\nu}(\alpha)}^p \intd\alpha 
& \leq  \norm{\nu}_1^{p} \sum_{1\leq j < \log_2(1/\Delta)} 2^{(1-j)p}\meas(E_{2^{-j}}) \\ 
& \ll_{K, p} \norm{\nu}_1^{p}N^{-1} \sum_{j=1}^\infty 2^{(1-j)p}2^{j(1+\frac{p}{2})}.
 \end{align*}
 The latter sum converges to an absolute constant of order $O_p(1)$ since $p > 2$.
\end{proof}

Our proof of Lemma \ref{quad lin-quad level set estimate} utilises the following divisor bound.
\begin{lemma}\label{bourgain divisor bound} Let
\begin{equation}\label{partial divisor}
d(n, Q) := \sum_{\substack{1\leq  q \leq Q\\ q \mid n}} 1.
\end{equation}
Then for any integer $B \geq 1$ and any real $X \geq 1$ we have
$$
\sum_{|n| \leq X}  d(n, Q)^B \ll_{\eps, B} Q^B + Q^\eps X.
$$
\end{lemma}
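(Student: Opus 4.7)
The plan is to expand $d(n,Q)^B$ combinatorially and then reduce everything to bounding a sum of $1/\lcm$ over $B$-tuples of small integers. From the definition \eqref{partial divisor},
\[
d(n,Q)^B = \sum_{\substack{q_1, \dots, q_B \leq Q \\ q_i \mid n \text{ for all } i}} 1 = \sum_{\substack{q_1, \dots, q_B \leq Q \\ \lcm(q_1, \dots, q_B) \mid n}} 1,
\]
so swapping the order of summation with the sum over $n$ and bounding the number of multiples of $\ell = \lcm(q_1, \dots, q_B)$ in $[-X, X]$ by $2X/\ell + 1$ gives
\[
\sum_{|n|\leq X} d(n,Q)^B \leq \sum_{q_1, \dots, q_B \leq Q}\left(\frac{2X}{\lcm(q_1, \dots, q_B)} + 1\right) \leq Q^B + 2X\sum_{q_1, \dots, q_B\leq Q} \frac{1}{\lcm(q_1, \dots, q_B)}.
\]
(The contribution of $n=0$, which is $Q^B$, is harmlessly absorbed.)

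The main step is then to show that
\[
\sum_{q_1, \dots, q_B \leq Q} \frac{1}{\lcm(q_1, \dots, q_B)} \ll_{\eps, B} Q^\eps.
\]
To do so I would group tuples by their common $\lcm$: if $\lcm(q_1, \dots, q_B) = \ell$, then each $q_i$ must divide $\ell$, and also $\ell \leq q_1 \dotsm q_B \leq Q^B$. Writing $\tau(\ell)$ for the usual divisor function, the number of tuples with $\lcm$ equal to $\ell$ is at most $\tau(\ell)^B$, so
\[
\sum_{q_1, \dots, q_B \leq Q} \frac{1}{\lcm(q_1, \dots, q_B)} \leq \sum_{\ell \leq Q^B} \frac{\tau(\ell)^B}{\ell}.
\]

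To finish, I would invoke the standard divisor estimate $\tau(\ell) \ll_\delta \ell^\delta$ for any $\delta > 0$, giving
\[
\sum_{\ell \leq Q^B} \frac{\tau(\ell)^B}{\ell} \ll_{\delta, B} \sum_{\ell \leq Q^B} \ell^{B\delta - 1} \ll_{\delta, B} Q^{B^2\delta}.
\]
Choosing $\delta = \eps/B^2$ yields the desired $Q^\eps$ saving and completes the proof. There is no real obstacle here: the argument is entirely elementary, the only modestly delicate point being to correctly track the range $\ell \leq Q^B$ forced by the constraint $q_i \leq Q$, which is precisely what prevents the $\tau(\ell)^B$ overcount from blowing up beyond a power of $Q^\eps$.
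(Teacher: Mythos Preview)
Your proof is correct and follows essentially the same route as the paper: expand $d(n,Q)^B$ as a sum over $B$-tuples $(q_1,\dots,q_B)$, swap the order of summation, bound the inner count of multiples of $\lcm(q_1,\dots,q_B)$ by $O(1+X/\lcm)$, regroup the resulting sum by the value of the lcm (which lies in $[1,Q^B]$ and is counted at most $\tau(\ell)^B$ times), and finish with the pointwise divisor bound $\tau(\ell)\ll_\delta \ell^\delta$. The only differences from the paper's version are cosmetic (your treatment of the $n=0$ term and the factor $2$ from the symmetric interval, and your explicit choice $\delta=\eps/B^2$ versus the paper's implicit one).
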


\begin{proof}
We follow Bourgain \cite[p.307]{BourgainLambda}:
\begin{multline*}
\sum_{|n| \leq X}  d(n, Q)^B  = \sum_{1\leq q_1, \dots, q_B \leq Q} \sum_{\substack{|n|\leq X\\ q_i \mid n}}1 \ll \sum_{1 \leq q_1, \dots, q_B \leq Q}  \brac{1+\frac{X}{[q_1, \dots, q_B]} }
\leq\\ Q^B + X\sum_{1 \leq q \leq Q^B} \frac{d(q)^B}{q} \ll_{\eps, B} Q^B + X\sum_{1 \leq q \leq Q^B} \frac{1}{q^{1-\frac{\eps}{B}}} \ll_{\eps, B} Q^B + Q^\eps X.\qedhere
\end{multline*}
\end{proof}

\begin{proof}[Proof of Lemma \ref{quad lin-quad level set estimate}]
Write $E_\delta := E_\delta(\phi, \nu)$ and
$$
 \epsilon(\alpha) := \begin{cases}\frac{\widehat{\phi\nu}(\alpha)}{\abs{\widehat{\phi\nu}(\alpha)}}, & \text{if } \widehat{\phi\nu}(\alpha)\neq 0;\\ 0, & \text{otherwise}.\end{cases}
$$
Then 
\begin{multline*}
\delta \norm{\nu}_1 \meas(E_\delta) \leq \int_{E_\delta} \abs{\widehat{\phi\nu}(\alpha)}\intd\alpha = \int_{E_\delta} \widehat{\phi\nu}(\alpha)\overline{\epsilon(\alpha)} \intd\alpha\\
= \sum_{ x} \phi(x)\sqrt{\nu(x)}\sqrt{\nu(x)}\int_{E_\delta} e(\alpha x ) \overline{\epsilon(\alpha)}\intd\alpha\\
 \leq  \norm{\nu}_1^{1/2}\brac{\sum_{ x} \nu(x)\abs{\int_{E_\delta} e(\alpha x)\overline{\epsilon(\alpha)}\intd\alpha}^2}^{1/2}.
\end{multline*}
Expanding absolute values, then using linearity of integration and the triangle inequality, we have
\begin{equation*}
\sum_{ x }\nu(x)\abs{\int_{E_\delta} e(\alpha x)\overline{\epsilon(\alpha)}\intd\alpha}^2
\leq \int_{E_\delta}\int_{E_\delta} |\hat\nu(\alpha_1 -\alpha_2)|\intd\alpha_1\intd\alpha_2.
\end{equation*}
Hence we deduce the Tomas-Stein inequality
\begin{equation*}
\delta^2 \norm{\nu}_1 \meas(E_\delta)^2 \leq  \int_{E_\delta}\int_{E_\delta} |\hat\nu(\alpha_1 -\alpha_2)|\intd\alpha_1\intd\alpha_2.
\end{equation*}

Consider the Fej\'er kernel
$$
F_{N}(\alpha) := N^{-1} \abs{\hat{1}_{[N]}(\alpha)}^2 =\sum_n \brac{1-\tfrac{|n|}{N}}_+ e(\alpha n),
$$
a trigonometric polynomial of degree $N-1$ which is also a probability measure on $\T$. 
Set
$$
\psi_N(\alpha) := \brac{1 + e(\alpha N) + e(-\alpha N)}F_{N}(\alpha).
$$
For $ |n| \leq N$ one can check that
$$
\hat{\psi}_N(n) = \brac{1-\tfrac{|n|}{N}}_+ + \brac{1-\tfrac{|n-N|}{N}}_++ \brac{1-\tfrac{|n+N|}{N}}_+ = 1.
$$
One can also check that if $n \in \Z\setminus [-N,N]$ then
$$
\int_\T \hat{\nu}(\alpha) e(-\alpha n) \intd\alpha = 0.
$$
Therefore the Fourier coefficients of $\alpha \mapsto \hat{\nu}(\alpha)$ agree with those of the convolution
$$
\hat{\nu} *\psi_{N}: \alpha \mapsto \int_{\T} \hat{\nu}(\alpha-\beta)\psi_{N}(\beta) \intd\beta.
$$
By Fej\'er's theorem \cite[Theorem 3.1]{KatznelsonIntroduction} these functions must be identical and we deduce that
\begin{multline*}
\delta^2 \norm{\nu}_1 \meas(E_\delta)^2 
\leq \int_{\T^3}  |\hat\nu(\alpha_1-\alpha_2 -\beta)\psi_{N}(\beta)|1_{E_\delta}(\alpha_1)1_{E_\delta}(\alpha_2)\intd\alpha_1\intd\alpha_2\intd\beta\\
\ll \int_{\T^3}  |\hat\nu(\alpha_1-\alpha_2 -\beta)|F_{N}(\beta)1_{E_\delta}(\alpha_1)1_{E_\delta}(\alpha_2)\intd\alpha_1\intd\alpha_2\intd\beta.
\end{multline*}

Let $Q \geq 1$ (to be determined) and write
\begin{equation}\label{quad major arcs}
\mathfrak{M} := \bigcup_{\substack{1\leq a \leq q \leq Q\\ \hcf(a,q) = 1}} \set{\alpha \in \T : \bignorm{\alpha - \tfrac{a}{q}}_\T\leq Q/N}.
\end{equation}
Our minor arc hypothesis (Definition \ref{minor hyp}) shows that if $\alpha \in  \T\setminus\mathfrak{M}$ then 
$
\abs{\hat\nu(\alpha)} \ll  Q^{-\Omega(1)} \norm{\nu}_1.
$
Hence 
\begin{equation*}
\int_{\alpha_1 - \alpha_2 - \beta \in \T\setminus\mathfrak{M}}  |\hat\nu(\alpha_1-\alpha_2 -\beta)|F_{N}(\beta)1_{E_\delta}(\alpha_1)1_{E_\delta}(\alpha_2)\intd\alpha_1\intd\alpha_2\intd\beta\\ \ll Q^{-\Omega(1)}\norm{\nu}_1\meas(E_\delta)^2 .
\end{equation*}
 It follows that either $Q \leq \delta^{-O(1)} $ or
\begin{equation*}
\delta^2 \norm{\nu}_1 \meas(E_\delta)^2\\
\ll \int_{\alpha_1 - \alpha_2 - \beta \in \mathfrak{M}}  |\hat\nu(\alpha_1-\alpha_2 -\beta)|F_{N}(\beta)1_{E_\delta}(\alpha_1)1_{E_\delta}(\alpha_2)\intd\alpha_1\intd\alpha_2\intd\beta.
\end{equation*}
Letting $C = O(1)$ denote a sufficiently large absolute constant, set
\begin{equation}\label{Q defn}
Q: = \delta^{-C}  \leq \delta^{-O(1)}.
\end{equation}
Then, for any $p \geq 1$,  H\"older's inequality yields
\begin{equation*}
\delta^{2p} \norm{\nu}_1^p \meas(E_\delta)^2\\
\ll \int_{\alpha_1 - \alpha_2 - \beta \in \mathfrak{M}}  |\hat\nu(\alpha_1-\alpha_2 -\beta)|^pF_{N}(\beta)1_{E_\delta}(\alpha_1)1_{E_\delta}(\alpha_2)\intd\alpha_1\intd\alpha_2\intd\beta.
\end{equation*}
Our major arc hypothesis (Definition \ref{major hyp}) implies that for $p \in [1, 2]$ we have the bound 
\begin{equation*}
\frac{1_{\mathfrak{M}}(\alpha) \abs{\hat\nu(\alpha)}^p}{\norm{\nu}_1^p} \ll_K \sum_{1\leq a\leq q \leq Q} q^{-1} \max\bigset{1, \bignorm{\alpha -\tfrac aq}_\T N}^{-p} +\ Q^{O(1)}N^{-\Omega(1)}.
\end{equation*}
Hence for $p \in [1,2]$ we deduce that either $N \leq \delta^{-O(1)}$ or 
\begin{equation}\label{quad pre holder}
\delta^{2p}  \meas(E_\delta)^2  \ll_K \\ \sum_{1\leq a\leq q \leq Q} q^{-1} \int_{E_\delta\times E_\delta\times\T}  \max\set{1, \bignorm{\alpha_1-\alpha_2-\beta - \tfrac{a}{q}}_\T N}^{-p}F_{N}(\beta).
\end{equation}

Set
$$
\mu_{Q, p}(\alpha) := \sum_{1\leq a \leq q \leq Q}q^{-1}\max\set{1, \bignorm{\alpha-\tfrac{a}{q}}_\T N}^{-p}.
$$
Then we can re-write \eqref{quad pre holder} as 
\begin{equation}\label{quad post holder}
\delta^{2p} \meas(E_\delta)^2  \ll_K \int_{E_\delta}\int_{E_\delta} \mu_{Q, p}*F_{N}(\alpha_1-\alpha_2)\intd\alpha_1\intd\alpha_2.
\end{equation}
Using the following normalisation for inner products
$$
\ang{f, g} := \int_\T f(\alpha) \overline{g(\alpha)} \intd \alpha,
$$
the inequality \eqref{quad post holder} implies that
$$
\delta^{2p} \meas(E_\delta)^2 \ll_K \ang{\mu_{Q, p}*F_{N} , 1_{E_\delta}*1_{-E_\delta}}.
$$
By Parseval's theorem \cite[Theorem 5.5(d)]{KatznelsonIntroduction} and the convolution identity \cite[Theorem 1.7]{KatznelsonIntroduction} we have
\begin{equation*}
 \ang{\mu_{Q, p}*F_{N} , 1_{E_\delta}*1_{-E_\delta}} 
=\ang{\hat{\mu}_{Q,p}\hat F_{N}, \bigabs{\hat{1}_{E_\delta}}^2}
\leq \sum_{|n| < N}\abs{\hat{\mu}_{Q,p}(n)}  \bigabs{\hat{1}_{E_\delta}(n)}^2.
\end{equation*}
Hence any $p \in [1,2]$ yields the estimate  
\begin{equation*}
\delta^{2p} \meas(E_\delta)^2  \ll_K \sum_{|n| < N}\abs{\hat{\mu}_{Q,p}(n)}  \bigabs{\hat{1}_{E_\delta}(n)}^2.
\end{equation*}

Recalling the definition \eqref{partial divisor} of $d(n,Q)$, a change of variables shows that for any $p > 1$ we have 
\begin{equation}
\abs{\hat\mu_{Q, p}(n) }\leq d(n,Q)  \int_\T \max\set{1, \norm{\alpha}_\T N}^{-p} \intd\alpha \ll \frac{d(n,Q)}{(p-1)N}.
\end{equation}
Hence, for any $B \geq 1$ and $p \in (1, 2]$,  H\"older's inequality gives  
\begin{align*}
\delta^{2p} \meas(E_\delta)^2 & \ll_K \frac{1}{(p-1) N} \Biggbrac{\sum_{|n|< N} d(n, Q)^B}^{1/B}\brac{\sum_{n} \bigabs{\hat{1}_{E_\delta}(n)}^{2B/(B-1)}}^{1-\recip{B}}.
\end{align*}
Applying Parseval again, together with Lemma \ref{bourgain divisor bound}, we conclude that for any $p \in (1, 2]$ and any integer $B\geq 1$  we have 
\begin{align*}
\delta^{2p} \meas(E_\delta)^2 & \ll_{K,B} \frac{1}{(p-1) N}  (Q^B + Q N)^{1/B}\meas(E_\delta)^{1 +\recip{B}}.
\end{align*}

Set $B := 1 + \ceil{1/\eps}$ and $p := 1 + B^{-1}$. Recalling our choice \eqref{Q defn} of $Q$, either $Q^B \leq QN$ or $N \leq \delta^{-O_{\eps}(1)}$. In the former case we have
\[
\meas(E_\delta) \ll_{K,\eps} N^{-1} Q^{\frac{1}{B-1}} \delta^{-\frac{2pB}{B-1}} \leq N^{-1} \delta^{-2 - O(\eps)}.
\]
The result  follows on rescaling $\eps$ to absorb the $O(1)$ constant in the exponent.
\end{proof}

\section{Exponential sum estimates}\label{Fourier Pseudorandomness}

The purpose of this section is to prove various bounds on exponential sums which are needed to verify the hypotheses required for our mixed restriction estimates (see Lemma \ref{mixed restriction}).  Three out of four of these mixed restriction estimates involve the following majorant, which also plays a prominent role in \cite{BrowningPrendivilleTransference} and \cite{CLPRado}. 

\begin{definition}[Majorant for squares]\label{square majorant}
For fixed $\xi \in [W]$, with $W$ even, define $\nu = \nu_{W, \xi} :[N] \to [0, \infty)$ by
\begin{equation}\label{majorant}
\nu(n) := \begin{cases} Wx+\xi, & \text{if } n = \frac{(Wx + \xi)^2 - \xi^2}{2W} \text{ for some } x \in \Z;\\
0, & \text{otherwise}.\end{cases}
\end{equation}
\end{definition}

Before estimating the Fourier transform of $\nu$ we recall Weyl's inequality for squares.

\begin{lemma}[Weyl's inequality]\label{weyl inequality}
Let $I$ be an interval of at most $N$ integers and let $\alpha, \beta \in \T$. Suppose that
$$
\abs{\sum_{n \in I} e(\alpha  n^2+\beta n)} \geq \delta N.
$$
Then there exists $q \ll \delta^{-O(1)}$ such that $\norm{q\alpha}_\T \ll \delta^{-O(1)}/N^2$.
\end{lemma}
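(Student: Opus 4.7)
The plan is to apply a single Weyl differencing step, reducing the quadratic phase to an average of linear exponential sums, and then extract rational approximations to $\alpha$ via Dirichlet's theorem. The linear parameter $\beta$ will turn out to be a red herring.

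First, squaring the hypothesis and substituting $n = m + h$ produces
\begin{equation*}
\delta^2 N^2 \leq \abs{\sum_{n\in I} e(\alpha n^2 + \beta n)}^2 = \sum_{|h|<N} e(\alpha h^2+\beta h) \sum_{m \in I \cap (I-h)} e(2\alpha h m).
\end{equation*}
Passing absolute values through the outer sum eliminates the phase $e(\alpha h^2 + \beta h)$, removing $\beta$ entirely. The inner sum is a geometric progression over an interval of length at most $N$ and is therefore bounded by $\min\bigbrac{N, \norm{2\alpha h}_\T^{-1}}$, yielding
\begin{equation*}
\delta^2 N^2 \ll N + \sum_{0 < |h| \leq N} \min\bigbrac{N, \norm{2\alpha h}_\T^{-1}}.
\end{equation*}

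Second, invoke Dirichlet's approximation theorem applied to $2\alpha$ with parameter $Q := N^2$ to produce $(a,q) = 1$ with $q \leq Q$ and $\abs{2\alpha - \tfrac{a}{q}} \leq 1/(qQ)$. The standard Vinogradov-type estimate (see \cite[Lemma 2.2]{VaughanHardy}) then gives
\begin{equation*}
\sum_{0 < |h| \leq N} \min\bigbrac{N, \norm{2\alpha h}_\T^{-1}} \ll \brac{\tfrac{N^2}{q} + N + q}\log(2qN).
\end{equation*}
Combining with the previous display forces $\delta^2 N^2 \ll (N^2/q + N + q)\log(2qN)$. A case analysis then shows that either $N \ll_\delta 1$, in which case the conclusion holds vacuously with $q := 1$, or $q \ll \delta^{-O(1)}$. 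In the latter case the Dirichlet bound delivers $\norm{q\cdot 2\alpha}_\T \leq 1/Q = N^{-2}$, and passing to the denominator $2q$ yields both $2q \ll \delta^{-O(1)}$ and $\norm{(2q)\alpha}_\T \ll \delta^{-O(1)}/N^2$, as required.

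The main obstacle is the absorption of the logarithmic factor $\log(2qN)$ in the Vinogradov-type estimate: since we demand $q \ll \delta^{-O(1)}$ with no $N$-dependence, one must either bootstrap from a coarse initial Dirichlet approximation (first extracting a weak bound $q \ll \delta^{-O(1)}\log N$, then refining) or choose Dirichlet parameters so that the logarithm gets absorbed into a larger power of $\delta^{-1}$ in the final bound. Once this bookkeeping is navigated, the argument closes.
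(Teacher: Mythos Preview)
The paper does not prove this lemma; it simply refers the reader to \cite[Lemma~A.11]{GreenTaoQuadratic}. Your Weyl-differencing outline is precisely the standard argument behind that reference, and your reduction to $\delta^2 N^2 \ll \sum_{|h|<N}\min\bigl(N,\|2\alpha h\|_\T^{-1}\bigr)$ (with $\beta$ eliminated) is correct.

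The logarithmic obstacle you flag is genuine if one insists on routing through Vaughan's Lemma~2.2: that bound carries a factor $\log(2qN)$, and your second proposed fix (absorbing $\log N$ into a power of $\delta^{-1}$) does not work as stated, since $\log N$ is not controlled by any power of $\delta^{-1}$. The bootstrap can be made to work but is fiddly. The clean resolution, which is what underlies the Green--Tao formulation, bypasses Vaughan's lemma entirely. From the displayed inequality, discard the contribution of those $h$ with $\|2\alpha h\|_\T > C\delta^{-2}/N$ (this is at most $\tfrac12\delta^2 N^2$ for suitable $C$), leaving $\gg \delta^2 N$ values of $|h|\leq N$ with $\|2\alpha h\|_\T \ll \delta^{-2}/N$. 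Now invoke the quantitative recurrence lemma: if $\|\theta h\|_\T\leq\eps$ for at least $\sigma N$ values of $|h|\leq N$ and $\eps < c\sigma$, then there exists $q\ll\sigma^{-1}$ with $\|q\theta\|_\T\ll \eps/(\sigma N)$. Taking $\theta=2\alpha$, $\eps\asymp\delta^{-2}/N$ and $\sigma\asymp\delta^2$ gives $q\ll\delta^{-2}$ with $\|2q\alpha\|_\T\ll\delta^{-4}/N^2$, and no logarithm appears.
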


\begin{proof}
See  Green--Tao \cite[Lemma A.11]{GreenTaoQuadratic}.
\end{proof}

\begin{corollary}[Coarse minor arc estimate for $\nu$] \label{QuadraticMinor}  Let $W$ be an even positive integer, $\xi \in [W]$ and define $\nu = \nu_{W, \xi}$ as in \eqref{majorant}.  Suppose that 
$$
\bigabs{\hat{\nu}(\alpha)} \geq \delta N.
$$
Then either $N \ll W$ or there exists $1\leq a \leq q \ll \delta^{-O(1)}W $ such that $\hcf(a, q) =1$ and $\bignorm{\alpha-\tfrac{a}{q}}_\T\ll \delta^{-O(1)}/N$.
\end{corollary}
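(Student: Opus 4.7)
The strategy is to rewrite $\hat{\nu}(\alpha)$ as a quadratic exponential sum with linear weights over an arithmetic progression, peel off the weights via Abel summation, and then invoke Weyl's inequality (Lemma \ref{weyl inequality}). The $W$-dependence in the final denominator is the price we pay for converting the Weyl rational approximation of $\alpha W/2$ into one of $\alpha$.

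First I would substitute $y = Wx + \xi$ in the definition \eqref{majorant} and use the assumption that $W$ is even to write $W = 2W'$. A brief calculation then gives
$$
\hat{\nu}(\alpha) = \sum_{x \in I} (Wx + \xi)\, e\bigbrac{\alpha(W' x^2 + \xi x)},
$$
where $I$ is an interval of integers of length $\asymp (N/W)^{1/2}$. For this parametrisation to furnish more than $O(1)$ terms we need $N$ to exceed an absolute constant multiple of $W$; otherwise we are in the degenerate case $N\ll W$ excluded by the statement. Writing $M := (N/W)^{1/2}$, the hypothesis $|\hat{\nu}(\alpha)| \geq \delta N$ reads $|\hat{\nu}(\alpha)| \geq \delta W M^2$.

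Next I would apply Abel summation to the weight $g(x) = Wx + \xi$, noting that $g(x+1) - g(x) = W$ and $\max_{x \in I} g(x) \ll WM$. This produces a sub-interval $I' \subseteq I$ such that
$$
\Bigabs{\sum_{x \in I'} e\bigbrac{\alpha(W' x^2 + \xi x)}} \gg \frac{|\hat{\nu}(\alpha)|}{WM} \gg \delta M.
$$
I would then apply Lemma \ref{weyl inequality} to this pure quadratic exponential sum of length at most $M$, with quadratic coefficient $\alpha W'$ and linear coefficient $\alpha \xi$. This yields a positive integer $q_0 \ll \delta^{-O(1)}$ satisfying
$$
\bignorm{q_0 W'\alpha}_{\T} \ll \delta^{-O(1)}/M^2 = \delta^{-O(1)} W / N.
$$

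Finally I would set $q_1 := q_0 W' = q_0 W/2$, so that $q_1 \ll \delta^{-O(1)}W$; there exists $a_1 \in \Z$ with
$$
\Bigabs{\alpha - \frac{a_1}{q_1}} \ll \frac{\delta^{-O(1)} W}{N q_1} = \frac{2 \delta^{-O(1)}}{q_0 N} \ll \frac{\delta^{-O(1)}}{N},
$$
the crucial cancellation being $W/q_1 = 2/q_0 \leq 2$. Reducing $a_1/q_1$ to lowest terms $a/q$ preserves the approximation while ensuring $\hcf(a, q) = 1$ and $q \le q_1 \ll \delta^{-O(1)} W$, as required; a standard normalisation places $a$ in $\{1, \dots, q\}$. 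The principal obstacle in the argument is the careful bookkeeping in this last step: Weyl provides a rational approximation with very small denominator $q_0$ but with the inflated relative precision $W/N$ (rather than $1/N^2$), and it is precisely the clearing of the factor $W'$ into the denominator that converts this into the claimed approximation of $\alpha$ itself.
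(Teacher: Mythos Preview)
Your proof is correct and follows essentially the same route as the paper: express $\hat{\nu}(\alpha)$ as a linearly weighted quadratic exponential sum over an interval of length $\asymp (N/W)^{1/2}$, strip the weight by Abel summation (the paper uses the integral form of partial summation, you use the discrete version), apply Weyl's inequality to the resulting unweighted sum, and then absorb the factor $W' = W/2$ into the denominator to pass from a rational approximation of $\alpha W'$ to one of $\alpha$. The paper's final step simply sets $q := \tfrac12 W q_0$, matching your $q_1 = q_0 W'$, and your explicit reduction to lowest terms is a harmless extra line.
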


\begin{proof}
Summation by parts gives that
\[
\sum_{x < n \leq y} f(n)a_n = f(y)\sum_{x < n \leq y} a_n - \int_x^y f'(t)\brac{\sum_{x< n\leq t} a_n} \intd t
\]
Hence
\begin{multline*}
\hat{\nu}(\alpha) = \sum_{0 < \recip{2} Wx^2 + \xi x \leq N} (Wx+\xi)e\brac{\alpha\brac{\trecip{2} Wx^2 + \xi x}} \\
= \brac{\sqrt{2WN + \xi^2}} \sum_{0 < \recip{2} Wx^2 + \xi x \leq N} e\brac{\alpha\brac{\trecip{2} Wx^2 + \xi x}} 
- W\int_0^{\frac{\sqrt{2WN + \xi^2}-\xi}{W}}  \sum_{0< x \leq t} e\brac{\alpha\brac{\trecip{2} Wx^2 + \xi x}} \intd t.
\end{multline*}
Since $\xi \in [W]$, one can check that either $N \ll W$ or the interval $\{x : 0 < \recip{2} Wx^2 + \xi x \leq N\}$ has length of order $\asymp \sqrt{N/W}$. It therefore follows that if $|\hat{\nu}(\alpha)| \geq \delta N$ then there exists $t \ll \sqrt{N/W}$ such that
$$
\abs{\sum_{0< \recip{2} Wx^2 + \xi x \leq t} e\brac{\alpha\brac{\trecip{2} Wx^2 + \xi x}}} \gg \delta \sqrt{N/W}
$$
Applying Weyl's inequality, there exists $q_0 \ll \delta^{-O(1)}$ such that $\norm{q_0 \alpha\trecip{2} W}_\T \ll \delta^{-O(1)}W/N$.  Setting $q := \trecip{2} W q_0$ then yields the result.
\end{proof}


\begin{lemma} [Major arc asymptotic for $\nu$] \label{MajorArcAsymptotic}
Let $W$ be an even positive integer, $\xi \in [W]$ and define $\nu = \nu_{W, \xi}$ as in \eqref{majorant}.
Suppose that $\| q \alpha \|_\T = |q \alpha - a|$ for some $q,a \in \Z$ with $q > 0$. Then either $N \ll W$ or
\begin{equation*}
\hat \nu(\alpha) = \E_{r\in [q]} e_q\brac{a\brac{\trecip{2} Wr^2 + \xi r}} \int_0^N e\brac{\brac{\alpha - \tfrac{a}{q}}t}\intd t\\ + O\brac{\brac{\sqrt{WN}+Wq}\brac{q + \norm{q\alpha}_{\T} N}}.
\end{equation*}
\end{lemma}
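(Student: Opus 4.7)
The plan is to decompose the sum defining $\hat\nu(\alpha)$ into arithmetic progressions modulo $q$, extract a complete exponential sum as a prefactor in the residue variable, and then approximate each remaining inner sum by an oscillatory integral which, after the substitution $u = P(y)$ with $P(x) := \tfrac{1}{2}Wx^2 + \xi x$, reduces to $\int_0^N e(\beta t)\intd t$ where $\beta := \alpha - a/q$. Unfolding the definition of $\nu$ (parametrising support by $x \geq 0$ via $n = P(x)$, so $\nu(n) = P'(x)$ with $P'(x) = Wx + \xi$) gives
\begin{equation*}
\hat\nu(\alpha) \;=\; \sum_{x \geq 0,\, 0 < P(x) \leq N} P'(x)\, e\bigbrac{\alpha P(x)}.
\end{equation*}
Splitting $x = r + qm$ with $r \in [q]$ and $m \geq 0$ and expanding $P(r+qm) = P(r) + qm\, P'(r) + \tfrac{W}{2} q^2 m^2$, both $am P'(r)$ and $\tfrac{aW}{2} q m^2$ are integers (here $W$ even is crucial, since $\tfrac{W}{2} \in \Z$), so $e\bigbrac{\tfrac{a}{q} P(r+qm)} = e_q(aP(r))$ depends only on $r \bmod q$. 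Hence
\begin{equation*}
\hat\nu(\alpha) = \sum_{r = 1}^{q} e_q\bigbrac{aP(r)}\, S_r(\beta), \quad S_r(\beta) := \sum_{m:\, 0 < P(r+qm) \leq N} P'(r+qm)\, e\bigbrac{\beta P(r+qm)}.
\end{equation*}

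Next I would approximate each $S_r(\beta)$ by $\tfrac{1}{q}\int_{I_r} g(y) \intd y$, where $g(y) := P'(y) e(\beta P(y))$ and $I_r \subset [0, \sqrt{2N/W}]$ is the continuous envelope of the AP $\{r+qm\}$. By Abel summation (equivalently Euler--Maclaurin), this approximation incurs error bounded by $\sup_{I_r}|g| + \mathrm{Var}_{I_r}(g)$, which computes via
\begin{equation*}
\sup_{I_r}|P'| + \int_{I_r}\bigbrac{|P''(y)| + 2\pi|\beta|\, P'(y)^2}\intd y \;\ll\; \sqrt{WN}\bigbrac{1 + |\beta|N},
\end{equation*}
using $N \gg W$ to absorb the linear term via $\int_0^{\sqrt{2N/W}} (Wy+\xi)^2 \intd y \ll \sqrt{W} N^{3/2} = N \cdot \sqrt{WN}$. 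Then the substitution $u = P(y)$ rewrites $\int_{I_r} g(y)\intd y = \int_{P(I_r)} e(\beta u) \intd u$; since the integer endpoints of the AP can miss $P^{-1}(0)$ and $P^{-1}(N)$ by at most $q$ in $y$, the set $P(I_r)$ differs from $[0, N]$ by a $u$-set of total length $\ll \sup|P'| \cdot q \ll \sqrt{WN}\cdot q$, producing after dividing by $q$ a further per-residue error $\ll \sqrt{WN}$. Summing over the $q$ residue classes (and using $q|\beta|N = \norm{q\alpha}_\T N$) yields
\begin{equation*}
\hat\nu(\alpha) = \E_{r \in [q]} e_q\bigbrac{aP(r)} \int_0^N e(\beta t)\intd t + O\bigbrac{\sqrt{WN}\brac{q + \norm{q\alpha}_\T N}},
\end{equation*}
which immediately implies the claimed bound $O\bigbrac{(\sqrt{WN}+Wq)(q + \norm{q\alpha}_\T N)}$.

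The main technical obstacle is the careful Euler--Maclaurin / Abel summation bookkeeping at the two endpoints of $I_r$, together with the matching boundary analysis after the $u$-substitution, and ensuring that the per-residue errors $\sqrt{WN}(1+|\beta|N)$ plus the boundary-mismatch $\sqrt{WN}$ assemble into the stated product form after summing over $r \in [q]$. The hypothesis $N \gg W$ is the natural threshold ensuring each interval $I_r$ actually contains integer points, so the Riemann-sum approximation is meaningful; this is exactly the case distinction $N \ll W$ appearing in the statement. Beyond this, the argument uses only Abel summation, the substitution rule, and the evenness of $W$ (needed precisely to make $(a/q)P(x)$ reduce to a complete exponential sum modulo $q$), so the proof is of the routine circle-method major-arc flavour.
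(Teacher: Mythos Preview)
Your proposal is correct and follows essentially the same route as the paper: split $\hat\nu(\alpha)$ into residue classes modulo $q$ (using $W$ even so that $e_q(aP(x))$ depends only on $x\bmod q$), then compare each inner sum with an integral which, after the substitution $u=P(y)$, becomes $q^{-1}\int_0^N e(\beta t)\,\intd t$. The paper carries out exactly this decomposition and simply cites a standard sum--to--integral comparison for the inner step, whereas you spell out the Abel/Euler--Maclaurin bookkeeping explicitly; your more careful accounting in fact yields the slightly sharper error $O\bigl(\sqrt{WN}(q+\|q\alpha\|_\T N)\bigr)$, which of course implies the stated bound. One small point: defining $I_r$ as ``the continuous envelope of the AP'' is awkward when the progression is empty (large $q$); it is cleaner to take $I_r$ to be the full $x$-range $[1,P^{-1}(N)]$ throughout, after which your variation estimate and endpoint analysis go through unchanged.
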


\begin{proof}
Writing $\beta := \alpha - \tfrac{a}{q}$ and summing over congruence classes mod $q$, we have
\begin{equation}\label{progression split}
\hat\nu(\alpha) = \sum_{r=1}^q e_q\brac{a\brac{\trecip{2} Wr^2 + \xi r}} \sum_{\substack{0< \recip{2} Wx^2 + \xi x \leq N\\ x \equiv r \bmod q}} (Wx+\xi)e\brac{\beta\brac{\trecip{2} Wx^2 + \xi x}}.
\end{equation}
Comparing the inner sum with an integral, as in \cite[Ex11]{TaoAPNTNotes1}, we have
\begin{equation*}
\sum_{\substack{0< \recip{2} Wx^2 + \xi x \leq N\\ x \equiv r \bmod q}} (Wx+\xi)e\brac{\beta\brac{\trecip{2} Wx^2 + \xi x}} = \\
q^{-1}\int_0^N e\brac{\beta t}\intd t 
 + O\brac{\brac{\sqrt{WN}+Wq}\brac{1+ N|\beta|}}.
\end{equation*}
Substituting this into \eqref{progression split} gives the result.
\end{proof}

\begin{lemma}[Local Weyl estimate]\label{local estimate} For any integers $a, b, q$ with $q$ positive we have
$$
|\E_{r\in [q]} e_q\brac{a r^2 + b r} | \ll \hcf(a,q)^{1/2}q^{-1/2}.
$$
\end{lemma}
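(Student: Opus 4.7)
The plan is to prove this via the standard Weyl squaring (differencing) technique, reducing to an orthogonality relation and a simple counting argument. Write
\begin{equation*}
S := \sum_{r=1}^{q} e_q(ar^2 + br), \qquad \text{so that} \qquad \E_{r\in [q]} e_q(ar^2 + br) = S/q.
\end{equation*}
I would square, expanding
\begin{equation*}
|S|^2 = \sum_{r,s \in [q]} e_q\bigl(a(r^2 - s^2) + b(r-s)\bigr),
\end{equation*}
and then change variables $s = r + h$ modulo $q$. Using the identity $(r+h)^2 - r^2 = 2rh + h^2$, the double sum factorises as
\begin{equation*}
|S|^2 = \sum_{h \in [q]} e_q(ah^2 + bh)\sum_{r \in [q]} e_q(2ahr).
\end{equation*}

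The inner sum over $r$ is an orthogonality relation: it equals $q$ when $q \mid 2ah$, and vanishes otherwise. Setting $d := \hcf(a,q)$, the condition $q \mid 2ah$ is equivalent to $q/\hcf(q, 2a) \mid h$, so exactly $\hcf(q, 2a)$ values of $h \in [q]$ contribute. Since $\hcf(q, 2a) \leq 2 \hcf(q, a) = 2d$, the triangle inequality applied to the resulting sum over these $h$ yields
\begin{equation*}
|S|^2 \leq q \cdot \hcf(q, 2a) \leq 2qd.
\end{equation*}
Dividing through by $q^2$ gives the required bound $|\E_{r \in [q]} e_q(ar^2 + br)| \leq \sqrt{2d/q} \ll d^{1/2} q^{-1/2}$.

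There is really no obstacle of substance here; the estimate is a classical evaluation of a quadratic Gauss sum, and the only arithmetic subtlety is the factor of $2$ arising from the distinction between $\hcf(q, 2a)$ and $\hcf(q,a)$, which is absorbed into the implicit constant.
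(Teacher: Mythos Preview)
Your proof is correct and is essentially identical to the paper's: both square and Weyl-difference, reduce the inner sum to orthogonality, count the surviving $h$ as $\hcf(2a,q)$, and absorb the factor of $2$ into the implied constant. (A tiny remark: with the substitution $s = r+h$ one gets $r^2 - s^2 = -(2rh+h^2)$, so your displayed factorisation has a harmless sign flip; alternatively set $r = s+h$ to match your formula exactly.)
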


\begin{proof}
Let $q_0 := q/\hcf(2a, q)$. Squaring and Weyl differencing gives
\begin{multline*}
|\E_{r\in [q]} e_q\brac{a r^2 + b r} |^2 \leq \E_{h \in [q]} \abs{\E_{r\in [q]} e_q\brac{a 2hr}}\\ = q^{-1}\hash\set{h \in [q] : 2a h \equiv 0 \bmod q} = q^{-1}\hash\set{h \in [q] : h  \equiv 0 \bmod q_0} \\
= q^{-1} \hcf(2a, q) \ll q^{-1} \hcf(a, q).\qedhere
\end{multline*}
\end{proof}
 
\begin{lemma}\label{W local weyl}
Let $W$ be an even positive integer and let $\xi \in [W]$ with $\hcf(\xi, W) =1$. Then for any positive integers $a$ and $q$ with $\hcf(a, q) = 1$ we have
$$
\E_{r\in [q]} e_q\brac{a\brac{\trecip{2} Wr^2 + \xi r}} \ll \begin{cases} 0, & \text{if } \hcf(q, \trecip{2}W) > 1;\\
q^{-1/2}, & \text{otherwise}.\end{cases}
$$
\end{lemma}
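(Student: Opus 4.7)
Let $S := \E_{r\in [q]} e_q(a(\tfrac{1}{2}Wr^2 + \xi r))$, which is well-defined since $W$ is even. The plan is to split on the value of $d := \hcf(q, \tfrac{1}{2}W)$: the case $d > 1$ will be handled by a shift argument forcing $S$ to vanish, while $d = 1$ will follow immediately from Lemma \ref{local estimate}.

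Suppose first that $d > 1$ and fix a prime $p \mid d$, so that $p \mid q$ and $p \mid \tfrac{1}{2}W$ (and hence also $p \mid W$). Replacing $r$ by $r + q/p$ leaves the sum over a complete residue system mod $q$ unchanged, so expanding the exponent yields
\[
S = e_q\bigbrac{a\bigbrac{Wrq/p + \tfrac{1}{2}W q^2/p^2 + \xi q/p}}\cdot\text{(something independent of $r$)}\cdot S,
\]
where I group the cross terms. More precisely, the change in the exponent is $a\bigbrac{Wrq/p + \tfrac{1}{2}W q^2/p^2 + \xi q/p}$. Modulo $q$ the first summand vanishes because $p \mid W$ makes $aWr/p$ an integer; the second vanishes because $(aW/(2p))(q/p)$ is an integer (using $p \mid \tfrac{1}{2}W$ and $p \mid q$); and the third contributes a phase factor $e_p(a\xi)$. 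Thus $S = e_p(a\xi) S$. Since $p \nmid a$ (from $\hcf(a,q)=1$) and $p \nmid \xi$ (from $\hcf(\xi, W) =1$ and $p \mid W$), we get $e_p(a\xi) \neq 1$ and therefore $S = 0$.

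Now suppose $d = 1$, so $\hcf(\tfrac{1}{2}W, q) = 1$. Applying Lemma \ref{local estimate} with the pair $(a\cdot \tfrac{1}{2}W, a\xi)$ in place of $(a,b)$ gives
\[
|S| \ll \hcf(a \cdot \tfrac{1}{2}W, q)^{1/2} q^{-1/2}.
\]
Since $\hcf(a,q) = 1$ and $\hcf(\tfrac{1}{2}W, q) = 1$, we have $\hcf(a\cdot \tfrac{1}{2}W, q) = 1$, yielding $|S| \ll q^{-1/2}$, as required.

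The main point to watch is the case analysis of $d$: everything in the first case hinges on ensuring that $p\mid W$ follows from $p\mid \tfrac12W$ (so that the linear cross term drops) and that the coprimality hypotheses $\hcf(a,q)=\hcf(\xi,W)=1$ combine to keep the phase $e_p(a\xi)$ nontrivial. No deeper obstacle is anticipated.
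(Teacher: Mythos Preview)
Your argument is correct. The case $d=1$ is handled identically to the paper, via Lemma~\ref{local estimate}. In the case $d>1$ your route differs slightly from the paper's: rather than picking a single prime $p\mid d$ and using the shift $r\mapsto r+q/p$ to produce the relation $S=e_p(a\xi)S$, the paper factors $q=q_0q_1$ with $q_1=\hcf(\tfrac12 W,q)$, writes $r=r_0+q_0r_1$, and observes that the sum over $r_1$ is a pure linear exponential sum $\E_{r_1\in[q_1]}e_{q_1}(a\xi r_1)$, which vanishes since $q_1>1$ and $\hcf(a\xi,q_1)=1$. Both arguments exploit the same coprimality facts (that $p\mid \tfrac12 W$ forces $p\mid W$, so $p\nmid\xi$; and $p\mid q$ forces $p\nmid a$); your shift argument is arguably cleaner in that it avoids the full factorisation, while the paper's decomposition makes the vanishing more visibly that of a complete character sum.

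One cosmetic point: your displayed line $S = e_q(\dots)\cdot(\text{something independent of }r)\cdot S$ is garbled as written (it has a free $r$ on the right and a spurious extra factor of $S$). What you mean, and what your subsequent term-by-term analysis correctly establishes, is that the shift multiplies each summand by the \emph{same} phase $e_p(a\xi)$, whence $S=e_p(a\xi)S$. Tidy that line and the proof is complete.
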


\begin{proof}
Write $q=q_0q_1$ where $q_1 = \hcf(\trecip{2}W, q)$.   Writing $r = r_0 + q_0r_1$ where $r_0 \in [q_0]$ and $r_1 \in [q_1]$ we have
\begin{multline}\label{large local weyl}
\abs{\E_{r\in [q]} e_q\brac{a\brac{\trecip{2} Wr^2 + \xi r}}} 
=\abs{\E_{r_0\in [q_0]} e_q\brac{a\trecip{2} Wr_0^2} \E_{r_1\in [q_1]} e_q\brac{a\brac{\xi (r_0 + q_0r_1)}}}\\ \leq \abs{ \E_{r_1\in [q_1]} e_{q_1}\brac{a\xi r_1}} = 1_{q_1 \mid \xi} .
\end{multline}
The estimate now follows if $q_1 = \hcf(\trecip{2}W, q) > 1$, since in this case $q_1  \nmid \xi$ because $\hcf(\xi, W) = 1$. The case when $\hcf(\trecip{2}W, q) = 1$ follows from Lemma \ref{local estimate}.
\end{proof}

\begin{lemma}\label{sqroot local weyl}
Let $W$ be an even positive integer and let $\xi \in [W]$ with $\hcf(\xi, W) =1$. Then for any positive integers $a$ and $q$ we have
$$
\E_{r\in [q]} e_q\brac{a\brac{\trecip{2} Wr^2 + \xi r}} \ll \begin{cases} 0 & \text{if $\trecip{2}W$ and $\frac{q}{(a,q)}$ are not coprime;}\\ (a,q)^{1/2}q^{-1/2} & \text{otherwise.}\end{cases}
$$
\end{lemma}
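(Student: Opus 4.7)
The plan is to reduce to the coprime case already handled by Lemma \ref{W local weyl}. Write $d := \hcf(a,q)$ and set $a = d a'$, $q = d q'$, so that $\hcf(a', q') = 1$ and $q/(a,q) = q'$.

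First I would observe that, since $W$ is even, $\trecip{2}Wr^2 + \xi r \in \Z$, and
\[
e_q\brac{a\brac{\trecip{2} Wr^2 + \xi r}} = e_{q'}\brac{a'\brac{\trecip{2} Wr^2 + \xi r}}.
\]
As $r$ runs through a complete system of residues modulo $q = dq'$, each residue class modulo $q'$ is visited exactly $d$ times, and the summand depends only on $r \bmod q'$. Hence
\[
\E_{r\in [q]} e_q\brac{a\brac{\trecip{2} Wr^2 + \xi r}} = \E_{r\in [q']} e_{q'}\brac{a'\brac{\trecip{2} Wr^2 + \xi r}}.
\]

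Now I would apply Lemma \ref{W local weyl} with $(a,q)$ replaced by $(a', q')$; this is legitimate because $\hcf(a', q') = 1$, $W$ is still the same even positive integer, and $\xi$ still satisfies $\hcf(\xi, W) = 1$. The conclusion of that lemma gives the sum is $0$ when $\hcf(q', \trecip{2}W) > 1$, and is $O((q')^{-1/2})$ otherwise. Translating via $q' = q/(a,q)$, the non-vanishing bound becomes $(a,q)^{1/2}q^{-1/2}$, and the vanishing criterion is exactly that $\trecip{2}W$ and $q/(a,q)$ are not coprime. This matches the claimed bound.

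There is no real obstacle: the key step is the reduction $\E_{r\in[q]} = \E_{r\in[q']}$, which just uses that the summand is $q'$-periodic in $r$ (this in turn relies on $W$ being even, so that $\trecip{2}W$ is a genuine integer and the exponential is well-defined modulo $q'$). Everything else is a direct invocation of the previous lemma.
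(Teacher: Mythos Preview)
Your proposal is correct and is essentially identical to the paper's proof: the paper writes $q_1 = (a,q)$, $q = q_0 q_1$, $a = a_0 q_1$, reduces the average over $[q]$ to the average over $[q_0]$ of $e_{q_0}(a_0(\trecip{2}Wr^2+\xi r))$, and then applies Lemma~\ref{W local weyl}. Your notation $(d,a',q')$ corresponds exactly to $(q_1,a_0,q_0)$, and you spell out the periodicity argument that justifies the reduction, which the paper leaves implicit.
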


\begin{proof}
Write $q=q_0q_1$ and $a = a_0q_1$ where $q_1 = \hcf(a, q)$.   Summing over residues mod $q_0$ we have
\begin{equation*}
\abs{\E_{r\in [q]} e_q\brac{a\brac{\trecip{2} Wr^2 + \xi r}}} 
=\abs{\E_{r\in [q_0]} e_{q_0}\brac{a_0\brac{\trecip{2} Wr^2 + \xi r}}}.
\end{equation*}
The result follows on applying  Lemma \ref{W local weyl}.
\end{proof}

\begin{lemma}[Refined minor arc estimate for $\nu$]\label{RefinedMinor}  Let $W$ be an even positive integer and $\xi \in [W]$ with $\hcf(\xi, W) =1$. Define $\nu = \nu_{W, \xi}$ as in \eqref{majorant}.  Suppose that 
$$
\bigabs{\hat{\nu}(\alpha)} \geq \delta N.
$$
Then either $N \ll W^{O(1)}$ or there exists $ q \ll \delta^{-O(1)} $ such that $\norm{q\alpha}_\T\ll \delta^{-O(1)}/N$. In particular, either $N\ll W^{O(1)}$ or $\nu$ satisfies the minor arc hypothesis (Definition \ref{minor hyp}).
\end{lemma}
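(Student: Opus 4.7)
The plan is to refine the coarse rational approximation from Corollary~\ref{QuadraticMinor} by inserting it into the major arc asymptotic (Lemma~\ref{MajorArcAsymptotic}) and exploiting the square-root cancellation in the local sum provided by Lemma~\ref{W local weyl}. This extra $q^{-1/2}$ saving is precisely what lets us improve the denominator from $O_\delta(W)$ (the quality afforded by Weyl's inequality alone) to $O_\delta(1)$ (the quality required by the minor arc hypothesis).

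Concretely, my first step is to apply Corollary~\ref{QuadraticMinor} to the assumption $|\hat\nu(\alpha)| \geq \delta N$. Discarding the harmless case $N \ll W$, this produces coprime integers $a,q$ with $1\leq a \leq q \ll \delta^{-O(1)}W$ and $\norm{\alpha - \tfrac{a}{q}}_\T \ll \delta^{-O(1)}/N$, placing $\alpha$ safely inside the range of validity of Lemma~\ref{MajorArcAsymptotic}. My second step is to invoke that lemma, which expresses $\hat\nu(\alpha)$ as the product of the local sum $\E_{r\in [q]} e_q\bigbrac{a\bigbrac{\trecip{2}Wr^2 + \xi r}}$ with the integral $\int_0^N e\bigbrac{(\alpha - a/q)t}\intd t$, plus an error term of size $(\sqrt{WN}+Wq)(q+\norm{q\alpha}_\T N)$.

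Since $\hcf(a,q) = 1$ and $\hcf(\xi, W) = 1$, Lemma~\ref{W local weyl} shows that the local sum either vanishes (when $\hcf(\trecip{2}W, q) > 1$) or is of size at most $q^{-1/2}$; combined with the trivial bound $N$ on the integral, the main term contributes at most $O(q^{-1/2}N)$. Substituting the coarse bounds $q \ll \delta^{-O(1)}W$ and $\norm{q\alpha}_\T \leq q\norm{\alpha - a/q}_\T \ll \delta^{-O(1)}W/N$ into the error term yields a bound of the form $\ll \delta^{-O(1)}\bigbrac{W^{3/2}\sqrt{N} + W^3}$, which is at most $\delta N/2$ once $N$ is sufficiently large in terms of $W$ (with implicit constant allowed to depend on $\delta$). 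Combining the two bounds forces
\[
\delta N \leq |\hat\nu(\alpha)| \ll q^{-1/2} N + \delta N/2,
\]
so $q \ll \delta^{-O(1)}$, and then $\norm{q\alpha}_\T = q\norm{\alpha - a/q}_\T \ll \delta^{-O(1)}/N$ follows from the original approximation.

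The ``in particular'' conclusion falls out from the elementary estimate $\norm{\nu}_1 \asymp N$ and a harmless rescaling of $\delta$ in Definition~\ref{minor hyp}. I expect the only non-mechanical part of the write-up to be the bookkeeping of the error term: one must keep careful track of how the $\delta$-dependence of the approximation propagates through the product $(\sqrt{WN}+Wq)(q + \norm{q\alpha}_\T N)$, and then see that the resulting threshold on $N$ is indeed of the form $N \gg W^{O(1)}$ as required. The genuinely substantive input, however, is the local Weyl bound of Lemma~\ref{W local weyl}; without the $q^{-1/2}$ it provides, one would not be able to beat the coarse bound $q \ll \delta^{-O(1)} W$.
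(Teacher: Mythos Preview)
Your proposal is correct and follows essentially the same route as the paper: apply Corollary~\ref{QuadraticMinor} for a coarse approximation, feed this into Lemma~\ref{MajorArcAsymptotic}, and use Lemma~\ref{W local weyl} to force $q \ll \delta^{-2}$. The one point to watch in your bookkeeping is that the error-term threshold you derive is actually $N \gg (W/\delta)^{O(1)}$ rather than $N \gg W^{O(1)}$; the paper handles this by observing that $N \ll (W/\delta)^{O(1)}$ splits into $N \ll W^{O(1)}$ or $N \ll \delta^{-O(1)}$, and in the latter case the conclusion of the minor arc hypothesis holds trivially (take $q = 1$).
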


\begin{proof}
Applying Corollay \ref{QuadraticMinor}, there exists $1 \leq a \leq q \ll W\delta^{-O(1)}$ for which $\hcf(a, q) = 1$ and $\bignorm{\alpha -\tfrac{a}{q}}_\T \ll \delta^{-O(1)}/N$.  By Lemma \ref{MajorArcAsymptotic}, either $N \ll (W/\delta)^{O(1)}$ or
$$
\abs{\E_{r\in [q]} e_q\brac{a\brac{\trecip{2} Wr^2 + \xi r}}} \gg \delta.
$$
Applying Lemma \ref{W local weyl}, we deduce that $q \ll \delta^{-2}$. Finally we note that $N \ll (W/\delta)^{O(1)}$ implies that either $N \ll W^{O(1)}$ or $N \ll \delta^{-O(1)}$, and the conclusion of the minor arc hypothesis is trivial if the latter holds.
\end{proof}

\begin{lemma}[Linear exponential estimates]\label{linear exp}
Let $I \subset \R$ be an interval and $\beta \in \R$. Then
\begin{equation*}
\int_I e(\beta t) \intd t  \ll \min\set{\meas(I), |\beta|^{-1}}.
\end{equation*}
If $\alpha \in \T$ then
\begin{equation*}
\sum_{x \in I} e(\alpha x)\ll \min\set{\meas(I)+1, \norm{\alpha}_\T^{-1}}.
\end{equation*}
Furthermore
\begin{equation*}
\sum_{x \in I} e(\beta x)- \int_I e(\beta t) \intd t\ll 1+|\beta|\meas(I).
\end{equation*}
\end{lemma}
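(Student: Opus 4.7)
The three bounds are standard, and I would address them separately, as each follows from an elementary computation.

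For the first estimate, the trivial bound $|\int_I e(\beta t)\intd t| \leq \meas(I)$ is immediate from the triangle inequality. For the complementary bound, evaluate the integral in closed form: if $I = [a,b]$ then $\int_I e(\beta t)\intd t = (e(\beta b) - e(\beta a))/(2\pi i \beta)$, whose modulus is at most $(\pi|\beta|)^{-1} \ll |\beta|^{-1}$. Taking the minimum yields the claim.

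For the second estimate, the trivial bound $|\sum_{x \in I} e(\alpha x)| \leq \#(I\cap\Z) \leq \meas(I)+1$ is immediate. For the complementary bound, writing $I\cap\Z = \{m, m+1, \dots, n\}$ and summing the geometric series gives
\[
\biggabs{\sum_{x=m}^{n} e(\alpha x)} = \biggabs{\frac{e(\alpha(n-m+1)) - 1}{e(\alpha) - 1}} \leq \frac{2}{|e(\alpha)-1|} \ll \norm{\alpha}_\T^{-1},
\]
using the elementary inequality $|e(\alpha) - 1| \gg \norm{\alpha}_\T$. Taking the minimum gives the claim.

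For the third estimate, I would compare the Riemann sum with the integral by a local mean-value argument. For each $n \in I\cap\Z$ set $J_n := [n-\tfrac12, n+\tfrac12]\cap I$, and write
\[
\sum_{x \in I} e(\beta x) - \int_I e(\beta t)\intd t = \sum_{n \in I\cap\Z}\biggsqbrac{e(\beta n) - \int_{J_n}e(\beta t)\intd t} + O(1),
\]
where the $O(1)$ term absorbs the contribution of $I \setminus \bigcup_n J_n$, a set of Lebesgue measure $O(1)$. For each $n$, since $\meas(J_n) \leq 1$,
\[
\biggabs{e(\beta n) \cdot \meas(J_n) - \int_{J_n} e(\beta t)\intd t} \leq \int_{J_n} \bigabs{e(\beta n) - e(\beta t)}\intd t \ll |\beta|\meas(J_n),
\]
by the Lipschitz bound $|e(\beta n) - e(\beta t)| \leq 2\pi|\beta||n - t|$ for $t\in J_n$. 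One also has $|e(\beta n)(1-\meas(J_n))| \leq 1$, with this discrepancy contributing only for the (at most two) boundary integers of $I$, contributing $O(1)$ in total. Summing the local error bounds gives $\sum_n |\beta|\meas(J_n) \leq |\beta|\meas(I)$, yielding the desired bound $\ll 1 + |\beta|\meas(I)$.

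I do not anticipate any genuine obstacle: each inequality is a direct consequence of the explicit formula for the geometric sum or integral, together with a Lipschitz estimate for $t \mapsto e(\beta t)$. The only mild care required is in the third estimate, where one must properly account for the boundary terms produced when $I$ does not have integer endpoints.
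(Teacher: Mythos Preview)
Your proposal is correct and follows essentially the same approach as the paper. The paper's own proof is a single sentence citing integration for the first bound, the geometric series for the second, and a standard sum-versus-integral comparison (referenced to an exercise in Tao's notes) for the third; your argument simply supplies the details of exactly these computations.
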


\begin{proof}
The first estimate follows from integration, the second from summing the geometric series, the third  from  approximating a sum by an integral as in \cite[Ex11]{TaoAPNTNotes1}.
\end{proof}

\begin{lemma}[Fourier decay]\label{fourier decay}
Suppose that $W$ is divisible by $2\prod_{p \leq w} p$ and that $\hcf(\xi, W) = 1$.  Define $\nu = \nu_{W, \xi}$ as in \eqref{majorant}.  Then either $N \ll W^{O(1)}$ or 
$$
\norm{\hat{\nu}-\hat{1}_{[N]}}_\infty \ll w^{-1/2}N.
$$
\end{lemma}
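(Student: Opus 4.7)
The proof is a standard circle method decomposition: split $\T$ into major and minor arcs around rationals $a/q$ of small denominator, and bound $\hat\nu(\alpha) - \hat{1}_{[N]}(\alpha)$ on each piece separately, aiming for $\ll w^{-1/2} N$ throughout. Let $C_0$ be a sufficiently large absolute constant (depending on the implicit exponents in Corollary \ref{QuadraticMinor} and the error term of Lemma \ref{MajorArcAsymptotic}), set $Q := W w^{C_0}$ and $\eta := w^{C_0}/N$, and define
$$
\mathfrak{M} := \bigcup_{\substack{1 \leq a \leq q \leq Q\\ \hcf(a,q) = 1}} \Bigset{\alpha \in \T : \bignorm{\alpha - \tfrac{a}{q}}_\T \leq \eta}.
$$
The hypothesis $N \gg W^{O(1)}$ (equivalently, the excluded case $N \ll W^{O(1)}$) will absorb all polynomial losses encountered below.

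On $\T \setminus \mathfrak{M}$, the contrapositive of Corollary \ref{QuadraticMinor} applied with a threshold $\delta$ of size $w^{-1/2}$ gives $|\hat\nu(\alpha)| \ll w^{-1/2} N$, because no rational approximation of the type required exists. Moreover the case $q = 1$, $a = 0$ in the definition of $\mathfrak{M}$ forces $\|\alpha\|_\T > \eta$, so Lemma \ref{linear exp} yields $|\hat{1}_{[N]}(\alpha)| \leq \eta^{-1} = N/w^{C_0} \ll w^{-1/2}N$. On the major arc at $q=1$ (that is, $\|\alpha\|_\T \leq \eta$), apply Lemma \ref{MajorArcAsymptotic} with $a=0$ to write $\hat\nu(\alpha) = I(\alpha) + O((\sqrt{WN}+W)(1+\eta N))$, where $I(\alpha) := \int_0^N e(\alpha t)\,\mathrm{d}t$, and apply the third part of Lemma \ref{linear exp} to obtain $\hat{1}_{[N]}(\alpha) = I(\alpha) + O(1+\eta N)$. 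Subtracting, the difference is $O(\sqrt{WN}\, w^{C_0})$, which is $\ll w^{-1/2} N$ once $N \gg W w^{O(1)}$.

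The decisive case is the major arc at $q \geq 2$. Lemma \ref{MajorArcAsymptotic} gives $\hat\nu(\alpha) = S(a,q) I(\alpha - a/q) + E$, where $S(a,q) := \E_{r \in [q]} e_q(a(\tfrac{1}{2}Wr^2 + \xi r))$ and $E \ll (\sqrt{WN}+WQ)(Q+\eta Q N) \ll W^{3/2}\sqrt{N}\, w^{O(1)}$, which is $\ll w^{-1/2} N$ in the regime $N \gg W^3 w^{O(1)}$. The key point is Lemma \ref{W local weyl}: since $W/2$ is divisible by every prime $\leq w$ and $\hcf(\xi, W) = 1$, either $\hcf(q, W/2) > 1$ and $S(a,q) = 0$, or $q$ is $w$-rough, hence $q > w$, whence $|S(a,q)| \ll q^{-1/2} \leq w^{-1/2}$. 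Either way $|S(a,q) I(\alpha - a/q)| \ll w^{-1/2} N$. For $\hat{1}_{[N]}$, the coprimality $\hcf(a,q) = 1$ with $q \geq 2$ and $a \in [q-1]$ forces $\|a/q\|_\T \geq 1/q$, so $\|\alpha\|_\T \gg 1/q$ once $\eta \ll 1/Q$, and Lemma \ref{linear exp} yields $|\hat{1}_{[N]}(\alpha)| \ll q \leq Q = Ww^{C_0}$, which is again $\ll w^{-1/2} N$ by the regime assumption. Combining the three cases completes the bound.

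The main obstacle is balancing the choice of $Q$: it must be large enough that failure of the Dirichlet-type approximation at denominator $Q$ forces $|\hat\nu|$ below $w^{-1/2} N$ via the minor-arc estimate, yet small enough that the error $(\sqrt{WN} + Wq)(q + \|q\alpha\|_\T N)$ in the major arc expansion of $\hat\nu$ remains below $w^{-1/2}N$. The scale $Q \sim W w^{O(1)}$ is the unique window that achieves both simultaneously, and the hypothesis $N \gg W^{O(1)}$ is tight precisely because the factor $\sqrt{WN}$ in the error dominates at the critical scale.
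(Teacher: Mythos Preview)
Your proof is correct and takes essentially the same approach as the paper: both arguments use Corollary~\ref{QuadraticMinor} to dispose of the minor arcs, Lemma~\ref{MajorArcAsymptotic} together with Lemma~\ref{W local weyl} to show that on major arcs with $q \geq 2$ the Gauss sum is either zero or $\ll w^{-1/2}$ (since $q$ must be $w$-rough), and then match $\hat\nu$ with $\hat{1}_{[N]}$ via the common integral $\int_0^N e(\alpha t)\,\mathrm{d}t$ near $\alpha = 0$. The paper phrases this contrapositively (assume $|\hat\nu(\alpha) - \hat{1}_{[N]}(\alpha)| \geq \delta N$ and force $w \ll \delta^{-2}$, then take $\delta \asymp w^{-1/2}$), which avoids fixing explicit major-arc parameters $Q, \eta$ at the outset, but the content is identical.
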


\begin{proof}
First suppose that $|\hat{\nu}(\alpha)-\hat{1}_{[N]}(\alpha)| \geq \delta N$. Then by the triangle inequality, either $|\hat{\nu}(\alpha)| \gg \delta N$ or $|\hat{1}_{[N]}(\alpha)| \gg \delta N$.  In the latter case, Lemma \ref{linear exp} gives that $\norm{\alpha}_\T \ll \delta^{-1}/N$.  We claim that a similar conclusion holds under the assumption that $|\hat{\nu}(\alpha)| \gg\delta N$.

To establish the claim we first repeat the argument of Lemma \ref{RefinedMinor} to conclude that either $N \ll (W/\delta)^{O(1)}$ or there exists  $1 \leq a \leq q \ll \delta^{-2}$ with $\hcf(a,q) = 1$  such that $\bignorm{\alpha -\tfrac{a}{q}}_\T \ll \delta^{-O(1)}/N$ and \begin{equation}\label{q largeness}
\abs{\E_{r\in [q]} e_q\brac{a\brac{\trecip{2} Wr^2 + \xi r}} \int_0^N e\brac{\brac{\alpha - \tfrac{a}{q}}t}\intd t} \gg \delta N.
\end{equation}
Applying Lemma \ref{W local weyl}, we deduce that $\hcf(q, \trecip{2}W) = 1$. Since we are assuming that $ \trecip{2}W$ is divisible by all primes $p \leq w$, we conclude that $q > w$ or $q = 1$.  If $q =1$ then we may bound the integral in \eqref{q largeness} using Lemma \ref{linear exp} to deduce that  $\norm{\alpha}_\T \ll \delta^{-1}/N$, as claimed.

We may therefore conclude that the assumption $|\hat{\nu}(\alpha)-\hat{1}_{[N]}(\alpha)| \geq \delta N$ implies that either $N \ll (W/\delta)^{O(1)}$, or $w \ll \delta^{-2}$ or
\begin{equation}\label{zero approx}
\norm{\alpha}_\T\ll \delta^{-1}/N.
\end{equation}  
Supposing that \eqref{zero approx} holds, if we substitute the approximations given by Lemma \ref{linear exp} and Lemma \ref{MajorArcAsymptotic} into the inequality $|\hat{\nu}(\alpha)-\hat{1}_{[N]}(\alpha)| \geq \delta N$, then again we deduce that $N \ll (W/\delta)^{O(1)}$. 

To summarise: if $|\hat{\nu}(\alpha)-\hat{1}_{[N]}(\alpha)| \geq \delta N$ then either $N \ll (W/\delta)^{O(1)}$ or $w \ll \delta^{-2}$.  The lemma is complete on taking $\delta := C w^{-1/2}$ for a sufficiently large absolute constant $C$, and on observing that $w \leq 2\prod_{p\leq w} p \leq W$ (by Bertrand's postulate, for instance).
\end{proof}

\begin{lemma}[Quadratic major arc asymptotic]\label{quad maj asymp}
Let $W$ be a positive integer, $\eta \in (0, 1/2]$ and define the interval
$$
I := \sqbrac{\eta(N/W)^{1/2}, (N/W)^{1/2}}.
$$
Suppose that $\| q \alpha \| = |q \alpha - a|$ for some $q,a \in \Z$ with $q > 0$. Then either $N \ll W^{O(1)}$ or
\begin{equation}\label{quad major}
\sum_{x \in I} e(\alpha Wx^2)  = W^{-1/2}\E_{r\in[q]} e_q\brac{aWr^2}\int_{\eta \sqrt{N}}^{\sqrt{N}} e\brac{\beta t^2}\intd t+ O\brac{q + \norm{q\alpha}_\T N}.
\end{equation}
\end{lemma}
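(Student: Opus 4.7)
The plan is a standard major-arc expansion: split the sum modulo $q$, use the factorisation of $e(\alpha W x^2)$ when $x$ runs over an arithmetic progression modulo $q$, and then approximate each resulting sum by an integral via Euler--Maclaurin.

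First I would write $\alpha = \tfrac{a}{q} + \beta$ where $\beta := \alpha - \tfrac{a}{q}$ satisfies $|\beta| = \norm{q\alpha}_\T/q$ (we may assume $\norm{q\alpha}_\T \leq 1/2$, otherwise the error term already dominates the trivial bound $\sqrt{N/W}$ and there is nothing to prove). Decomposing $x = r + qm$ with $r\in [q]$ and expanding $(r+qm)^2$, one sees that
\[
\alpha W x^2 \equiv \tfrac{a}{q} W r^2 + \beta W x^2 \pmod 1,
\]
since all cross terms $2aWrm$ and $aqWm^2$ are integers. Therefore
\[
\sum_{x \in I} e(\alpha W x^2)
 = \sum_{r=1}^q e_q(aWr^2) \sum_{\substack{x \in I\\ x \equiv r (q)}} e(\beta W x^2).
\]

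Next, for each fixed $r$ I would estimate the inner sum by an integral. Substituting $x = r + qm$, the sum becomes $\sum_{m \in J}g(m)$ where $g(m) := e(\beta W(r+qm)^2)$ and $J$ is the real interval of those $m$ with $r + qm \in I$. Since $|g'(m)| = |2\pi \beta W \cdot 2(r+qm) \cdot q| \leq 4\pi |\beta| W q \cdot \sqrt{N/W}$, the standard approximation $\sum_{m\in J\cap\Z} g(m) = \int_J g + O(\norm{g}_\infty + \int_J |g'|)$ (essentially \cite[Ex11]{TaoAPNTNotes1}, as used in Lemma \ref{MajorArcAsymptotic}) together with a change of variable $t = r+qm$ gives
\[
\sum_{\substack{x \in I\\ x\equiv r (q)}} e(\beta W x^2)
 = \tfrac{1}{q}\int_{I} e(\beta W t^2)\intd t + O\bigl(1 + |\beta| N\bigr),
\]
the error $|\beta|N$ arising since $\int_I |f'(t)|\intd t \ll |\beta|W\int_I t\intd t \ll |\beta|N$ for $f(t):=e(\beta W t^2)$.

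Summing over the $q$ residues contributes a total error of $O(q + q|\beta|N) = O(q + \norm{q\alpha}_\T N)$, and produces the main term
\[
\E_{r\in[q]} e_q(aWr^2)\int_I e(\beta W t^2)\intd t.
\]
Finally, the substitution $u = \sqrt{W}\,t$ converts the $t$-integral to $W^{-1/2} \int_{\eta\sqrt N}^{\sqrt N} e(\beta u^2)\intd u$, yielding the stated formula. The qualifier $N\ll W^{O(1)}$ is only a safeguard ensuring $I$ contains integers and the Euler--Maclaurin approximation gives a nontrivial bound; in the complementary regime, the claim is trivial since both sides are absorbed into the error. The main technical point --- and the one I expect to require most care --- is checking that the derivative-integral error $\int_I |f'|\intd t$ really delivers $q|\beta|N = \norm{q\alpha}_\T N$ after summation, rather than something like $\sqrt{NW}\,\norm{q\alpha}_\T$ which would be too large.
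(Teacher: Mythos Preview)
Your proposal is correct and follows essentially the same route as the paper: split into residues mod $q$, approximate each inner sum by $q^{-1}\int_I e(\beta W t^2)\,\intd t + O(1+|\beta|N)$ via the standard sum--integral comparison, then sum the $q$ errors. The only cosmetic difference is that the paper absorbs the substitution $u=\sqrt{W}\,t$ directly into the integral approximation step, whereas you perform it afterwards; your caution about the derivative error is well placed but, as you computed, $\int_I |f'| \ll |\beta| W \cdot (N/W) = |\beta| N$ gives exactly the $\norm{q\alpha}_\T N$ required after summing over $r$.
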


\begin{proof}

Let $\alpha \in \mathfrak{M}(a,q)$ and let $\beta$ denote the least absolute real in the congruence class $ \alpha - \tfrac{a}{q} \pmod 1$. Summing over residues mod $q$, we have
\begin{equation}\label{S_2 progression split}
\sum_{x \in I} e(\alpha Wx^2) = \sum_{r=1}^q e_q\brac{aWr^2} \sum_{\substack{x \in I\\ x \equiv r \bmod q}} e\brac{\beta Wx^2}.
\end{equation}
Comparing the inner sum with an integral as in \cite[Ex11]{TaoAPNTNotes1} gives
\begin{equation*}
\sum_{\substack{x \in I\\ x \equiv r \bmod q}} e\brac{\beta Wx^2} =  q^{-1}W^{-1/2}\int_{\eta \sqrt{N}}^{\sqrt{N}} e\brac{\beta t^2}\intd t+ O\brac{1+|\beta|N}.
\end{equation*}
Substituting this into \eqref{S_2 progression split} gives \eqref{quad major}.
\end{proof}

\begin{lemma}[Quadratic exponential integral bound]\label{integral bound} For $\beta \in \R$ we have
$$
\abs{\int_{\eta N}^N e(\beta t^2) \intd t} \ll N\max\set{1, \eta|\beta| N^{2}}^{-1}.
$$					
\end{lemma}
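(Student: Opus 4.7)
The plan is to split into the two regimes determined by the $\max$ on the right-hand side. When $\eta |\beta| N^2 \leq 1$, the $\max$ equals $1$ and we want a bound of order $N$; this is immediate from the triangle inequality since the interval of integration has length at most $N$. So the substantive case is $\eta |\beta| N^2 > 1$, where the claim reduces to $|\int_{\eta N}^N e(\beta t^2)\, \intd t| \ll (\eta |\beta| N)^{-1}$.

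For that case I would integrate by parts, using the identity
\[
e(\beta t^2) = \frac{1}{4\pi i \beta t}\,\frac{\intd}{\intd t} e(\beta t^2),
\]
valid on the interval $[\eta N, N]$, on which $t$ is bounded away from $0$. Setting $u = (4\pi i \beta t)^{-1}$ and $\intd v = \intd e(\beta t^2)$, one obtains
\[
\int_{\eta N}^N e(\beta t^2)\, \intd t = \left[\frac{e(\beta t^2)}{4\pi i \beta t}\right]_{\eta N}^{N} + \int_{\eta N}^N \frac{e(\beta t^2)}{4\pi i \beta t^2}\, \intd t.
\]

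The boundary term is $\ll (|\beta| \eta N)^{-1}$ since $\eta N \leq t \leq N$ forces $|t|^{-1} \leq (\eta N)^{-1}$. The remaining integral is dominated, via the triangle inequality and a direct evaluation, by
\[
\int_{\eta N}^N \frac{\intd t}{4\pi |\beta| t^2} = \frac{1}{4\pi |\beta|}\left(\frac{1}{\eta N} - \frac{1}{N}\right) \leq \frac{1}{4\pi |\beta| \eta N}.
\]
Adding these contributions yields the desired bound in the nontrivial regime, completing the proof. There is no real obstacle here: the argument is a one-step stationary-phase/van der Corput estimate, and the only minor point to be careful about is ensuring $t$ stays bounded below by $\eta N$ so that the weight $1/t$ in the integration-by-parts formula is harmless — which is exactly why the parameter $\eta$ appears in the bound.
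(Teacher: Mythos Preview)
Your proof is correct and follows essentially the same integration-by-parts idea as the paper. The only cosmetic difference is that the paper first substitutes $v = \beta t^2$ and then integrates by parts on $\int \tfrac{e(v)}{2v^{1/2}}\,\intd v$, whereas you integrate by parts directly on the original integral; your route is marginally more direct but the content is the same.
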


\begin{proof}
Let us show that for $\beta > 0$ we have
$$
\abs{\int_{\eta N}^N e(\beta t^2) \intd t} \ll \recip{\eta\beta N}.
$$
The claimed bound then follows on incorporating the trivial estimate of $N$, and utilising conjugation to deal with $\beta<0$. By a change of variables
$$
\int_{\eta N}^N e(\beta t^2) \intd t = \beta^{-1/2} \int_{\eta^2 \beta N^2}^{\beta N^2} \frac{e(v)}{2v^{1/2}} \intd v.
$$
Integrating by parts shows that for $0 < x \leq y$ we have
\[
\int_x^y \frac{e(t)}{t^{1/2}} \intd t \ll x^{-1/2}.\qedhere
\]
\end{proof}

\begin{lemma}[Major arc hypotheses]\label{major hyp veri}
Let $W_1$ and $W_2$ be $w$-smooth positive integers such that $W_1$ is divisible by $2 \prod_{p \leq w} p$. Given $\xi \in [W_1]$ with $\hcf(\xi, W_1) =1$, define $\nu = \nu_{W_1, \xi} : [N] \to [0, \infty)$ as in \eqref{majorant}.
Given $\eta \in (0, 1/2]$, define the interval
$$
I := \sqbrac{\eta(N/W_2)^{1/2}, (N/W_2)^{1/2}}.
$$
Fix non-zero integers $b_1, b_2 = O(1)$ and write $B:= |b_1|+|b_2|$. Consider the following four majorants,  mapping each $n \in [-BN,BN ]$ to one of
\begin{equation}\label{majorants2}
\sum_{b_1x +b_2y = n} \nu(x)\nu(y), \quad  \sum_{b_1x + b_2W_2y^2=n} \nu(x)1_I(y), \quad  \sum_{b_1x + b_2y=n}  \nu(x)1_I(y), \quad  \sum_{b_1W_2x^2 + b_2y=n} 1_I(x)1_I(y).
\end{equation}
Then either $N \ll (W_1W_2)^{O(1)}$ or all four majorants satisfy the major arc hypothesis (Definition \ref{major hyp}), the latter with constant $\eta^{-O(1)}$.
\end{lemma}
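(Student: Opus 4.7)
The plan is to verify the major-arc hypothesis separately for each of the four majorants in \eqref{majorants2}, exploiting that in every case orthogonality factorizes $\hat\mu(\alpha) = \phi_1(b_1\alpha)\phi_2(b_2\alpha)$, where each $\phi_i$ is one of three basic types of exponential sum: $\hat\nu$, the pure quadratic sum $\sum_{y\in I} e(cW_2\alpha y^2)$, or the linear sum $\hat{1}_I$. I will therefore obtain a major-arc estimate for each basic factor individually, and then multiply the estimates together.

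For $\alpha = a/q + \beta$ with $\hcf(a,q) = 1$ and $|\beta|\leq Q/N$, the factor bounds are as follows. The combination of Lemma \ref{MajorArcAsymptotic} and Lemma \ref{W local weyl} gives $|\hat\nu(b\alpha)|/\|\nu\|_1 \ll q^{-1/2}\max\{1,|\beta|N\}^{-1}$ whenever $q/\hcf(b,q)$ is coprime to $W_1/2$; since $W_1$ is divisible by every prime $\leq w$ and $b = O(1)$, taking $w > \max\{|b_1|,|b_2|\}$ makes this coprimality equivalent to $w$-roughness of $q$, while in the opposite case the local Weyl sum vanishes and the Lemma \ref{MajorArcAsymptotic} error $O((W_1N)^{1/2}Q)$ is absorbed into $Q^{O(1)}N^{-\Omega(1)}\|\nu\|_1$ under the escape $N \gg (W_1W_2)^{O(1)}$. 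For the quadratic sum, Lemma \ref{quad maj asymp} combined with Lemma \ref{local estimate} and Lemma \ref{integral bound} analogously yields $|I|\eta^{-1}q^{-1/2}\max\{1,|\beta|N\}^{-1}$, the $q^{-1/2}$ relying on $\hcf(W_2,q) = O(1)$ (ensured by $w$-roughness of $q$ and $w$-smoothness of $W_2$). For $\hat{1}_I(b\alpha)$, Lemma \ref{linear exp} gives $|I|\max\{1,|\beta||I|\}^{-1}$ when $q\leq |b| = O(1)$ (main term present, i.e.\ $q\mid b$), and the geometric-series bound $O(q)$ when $q > |b|$, since in the latter case $\|b\alpha\|_\T \gg 1/q$.

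Multiplying the appropriate factor bounds delivers $|\hat\mu(\alpha)|/\|\mu\|_1 \ll \eta^{-O(1)}q^{-1}\max\{1,|\beta|N\}^{-1}$ for each majorant: the first and second combine two $q^{-1/2}$ factors cleanly; the third and fourth combine the $q^{-1/2}$ decay (from $\hat\nu$ or from the quadratic sum) with the linear $\hat{1}_I$ factor via a short case split on whether $q = O(1)$ or $q$ is larger. The main technical obstacle arises in the regime where $q$ is large enough that the linear factor contributes $O(q)$ rather than decay: the direct product then falls short of $q^{-1}\max\{1,|\beta|N\}^{-1}$ by a factor of roughly $q^{3/2}/|I|$, and one must absorb this slack into $Q^{O(1)}N^{-\Omega(1)}$, which is precisely where the escape hypothesis $N \gg (W_1W_2)^{O(1)}$ is used. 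A careful audit confirms that $\eta$ enters only through Lemma \ref{integral bound}, consistent with the advertised constant $\eta^{-O(1)}$.
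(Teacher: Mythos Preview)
Your approach is essentially the paper's and is sound for the first three majorants, but the justification you give for the fourth contains a gap. You claim the quadratic factor $\sum_{x\in I}e(b_1W_2\alpha x^2)$ enjoys a bound of the shape $|I|\eta^{-1}q^{-1/2}\max\{1,|\beta|N\}^{-1}$, with the $q^{-1/2}$ coming from $\hcf(W_2,q)=O(1)$, ``ensured by $w$-roughness of $q$''. But that $w$-roughness (more precisely, of $q/\hcf(b,q)$) was deduced from the vanishing of the $\hat\nu$ local Weyl sum via Lemma~\ref{W local weyl}, and the fourth majorant has no $\hat\nu$ factor. Nothing in your argument forces $\hcf(W_2,q)$ to be bounded here; one could for instance have $q=W_2$, in which case Lemma~\ref{local estimate} gives no saving on the Gauss sum. (Also note that $w$ is a given parameter of the lemma, not something you may choose to exceed $|b_i|$.)

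The fix is immediate and is essentially how the paper proceeds. In your case split on the linear factor $\hat{1}_I(b_2\alpha)$: when $q\mid b_2$ you have $q=O(1)$, so $\hcf(W_2,q)\le q=O(1)$ is automatic and your quadratic bound goes through (this is the only place Lemma~\ref{integral bound} is genuinely needed, producing the advertised $K=\eta^{-O(1)}$). When $q\nmid b_2$, you do not need the local $q^{-1/2}$ saving at all: the trivial bound $|I|$ on the quadratic factor together with $|\hat{1}_I(b_2\alpha)|\ll\|b_2\alpha\|_\T^{-1}\ll q$ gives $|\hat\mu(\alpha)|/\|\mu\|_1 \ll q/|I|\le QW_2^{1/2}N^{-1/2}$, which absorbs into $Q^{O(1)}N^{-\Omega(1)}$ under the escape $N\gg W_2^{O(1)}$. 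The paper organises majorants three and four in exactly this way --- using the linear factor to force the reduction to $q\mid b_2$ first, and only then invoking the quadratic asymptotic --- thereby never appealing to Gauss-sum cancellation in the regime where $q$ is not already $O(1)$.
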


\begin{proof} Let $1\leq a \leq q \leq Q$ with $\hcf(a, q) = 1$ and $\bignorm{\alpha - \tfrac{a}{q}}_\T\leq Q/(BN)$. We may choose $\alpha\in \R$ so that $\bigabs{\alpha - \tfrac{a}{q}}= \bignorm{\alpha - \tfrac{a}{q}}_\T$. Our task is to bound the Fourier transform of our majorant at $\alpha$.

The first majorant in \eqref{majorants2} has Fourier transform $\hat{\nu}(b_1\alpha)\hat{\nu}(b_2\alpha)$.   We claim that this is bounded in magnitude by 
\[
\ll q^{-1} N^2\max\set{1, \bigabs{\alpha-\tfrac{a}{q}} N}^{-1}+ N^{3/2}W_1^{1/2}Q^2.
\]
As $1 \leq B \ll 1$, the major arc hypothesis for this majorant follows, since it has $L^1$-norm $\gg N^2$ (unless $N \ll W_1^{O(1)}$). 
To establish the claim it suffices to show that for any non-zero integer $b = O(1)$ we have the bound
\begin{equation}\label{claimed major bound}
\abs{\hat{\nu}(b\alpha)} \ll q^{-1/2} N\max\set{1, \bigabs{\alpha-\tfrac{a}{q}} N}^{-1}+ N^{1/2}W_1Q^2.
\end{equation}
This follows from Lemmas \ref{MajorArcAsymptotic}, \ref{sqroot local weyl} and \ref{linear exp}.

Next we turn to the second majorant in \eqref{majorants2}. Employing Lemma \ref{MajorArcAsymptotic} and Lemma \ref{quad maj asymp}, either $N \ll (W_1W_2)^{O(1)}$ or this majorant has Fourier transform bounded in magnitude by 
\begin{multline*}
W_2^{-1/2}\abs{\E_{r_1\in [q]} e_q\brac{b_1a\brac{\trecip{2} W_1r_1^2 + \xi r_1}}}\abs{ \E_{r_2\in[q]} e_q\brac{b_2aW_2r_2^2}}\times \\ \abs{ \int_1^N e\brac{\brac{\alpha - \tfrac{a}{q}}b_1t_1}\intd t_1}\biggabs{\int_{\eta \sqrt{N}}^{\sqrt{N}} e\brac{\brac{\alpha - \tfrac{a}{q}}b_2t_2^2}\intd t_2}+ O\brac{N^{3/2}W_1Q^2}.
\end{multline*}
By Lemma \ref{linear exp}, the first integral is at most $N\max\bigset{1, \bigabs{\alpha - \tfrac{a}{q}}N}^{-1}$. Applying the trivial bound to the second integral, it suffices to prove the bound
\begin{equation}\label{local product}
\abs{\E_{r_1\in [q]} e_q\brac{b_1a\brac{\trecip{2} W_1r_1^2 + \xi r_1}}}\abs{ \E_{r_2\in[q]} e_q\brac{b_2aW_2r_2^2}} \ll q^{-1}.
\end{equation}
By Lemma \ref{sqroot local weyl}, the left-hand side of \eqref{local product} is zero if $\trecip{2}W_1$ and $q/(b_1,q)$ are not coprime. We may therefore assume that they are coprime.
Since $W_2$ is $w$-smooth and $\trecip{2}W_1$ is divisible by the primorial $\prod_{p\leq w} p$, we must have $\hcf(W_2, q/(b_1,q)) =1$, and so $\hcf(b_2W_2, q) \leq b_1b_2$.  Hence Lemmas \ref{local estimate} and \ref{sqroot local weyl} combine to give the bound
\[
\abs{\E_{r_1\in [q]} e_q\brac{b_1a\brac{\trecip{2} W_1r_1^2 + \xi r_1}}}\abs{ \E_{r_2\in[q]} e_q\brac{b_2aW_2r_2}} \ll_{b_1,b_2} q^{-1}.
\]
The major arc bound \eqref{major arc bound} follows with $K = O(1)$.

We simultaneously analyse the third and fourth majorants in \eqref{majorants}.  Under the assumption of our rational approximation to $\alpha$, we have the lower bound
\[
q^{-1}\max\bigset{1, \bignorm{\alpha-\tfrac{a}{q}}_\T N}^{-1} \geq Q^{-2}
\]
Hence using the trivial bound on the quadratic exponential sum, and  Lemma \ref{linear exp} on the linear exponential sum, we obtain the major arc bound \eqref{major arc bound} unless 
\[
\norm{b_2\alpha}_\T \leq Q^2 W_2^{1/2} N^{-1/2}.
\]
In this situation the triangle inequality implies that
\begin{equation}\label{fraction upper bound}
\norm{b_2a/q}_\T \leq Q^2 W_2^{1/2} N^{-1/2} + QN^{-1}.
\end{equation}
Observe that if $N \ll Q^{O(1)}$ then \eqref{major arc bound} follows trivially.  Assuming that this is not the case, and that it is not the case that $N \ll W_2^{O(1)}$, we deduce from \eqref{fraction upper bound} that $\norm{b_2a/q}_\T < 1/q$. The only way this can happen is if $q \mid b_2$.  It therefore suffices to assume that $q\mid b_2$, so that $q = O(1)$.

In the case of the third majorant, Lemma \ref{MajorArcAsymptotic} and the trivial bound for the linear sum together give an upper bound of the form  
\begin{multline*}
(N/W_2)^{1/2}\abs{\int_0^N e\brac{b_1\brac{\alpha -\tfrac{a}{q}}t} \intd t} + O(W_1N Q^2)\\ \ll N^{3/2}W_2^{-1/2} \max\set{1, \abs{b_1\brac{\alpha -\tfrac{a}{q}}} N}^{-1} + W_1N Q^2.
\end{multline*}
This yields the major arc bound \eqref{major arc bound} with $K = O(1)$.

In the case of the fourth majorant, Lemmas \ref{quad maj asymp} and \ref{integral bound} combine (with the trivial bound for the linear sum) to give an upper bound of the form
\begin{equation*}
(N/W_2) \eta^{-1}\max\set{1, \abs{b_1\brac{\alpha -\tfrac{a}{q}}} N}^{-1}+ N^{1/2}Q^2
\end{equation*}
From this we obtain the major arc bound \eqref{major arc bound} with  $K = \eta^{-O(1)}$.
\end{proof}

\begin{lemma}[Minor arc hypotheses]\label{minor hyp veri}
Let $W_1$ be an even positive integer and $\xi \in [W_1]$ with $\hcf(\xi, W_1) =1$. Define $\nu = \nu_{W_1, \xi} : [N] \to [0, \infty)$ as in \eqref{majorant}.
Let $W_2$ be a positive integer, $\eta \in (0, 1/2]$ and define the interval
$$
I := \sqbrac{\eta(N/W_2)^{1/2}, (N/W_2)^{1/2}}.
$$
Fix non-zero integers $b_1, b_2 = O(1)$ and write $B:= |b_1|+|b_2|$. Consider the following four majorants,  mapping each $n \in [-BN,BN ]$ to one of
\begin{equation}\label{majorants}
\sum_{b_1x +b_2y = n} \nu(x)\nu(y), \quad  \sum_{b_1x + b_2W_2y^2=n} \nu(x)1_I(y), \quad  \sum_{b_1x + b_2y=n}  \nu(x)1_I(y), \quad  \sum_{b_1W_2x^2 + b_2y=n} 1_I(x)1_I(y).
\end{equation}
Then either $N \ll (W_1W_2)^{O(1)}$ or all four majorants satisfy the minor arc hypothesis (Definition \ref{minor hyp}).
\end{lemma}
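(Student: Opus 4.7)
The plan is to factor the Fourier transform of each of the four majorants as a product of exponential sums, then invoke the estimates already developed in this section. Labelling the majorants in \eqref{majorants} as $\mu_1, \mu_2, \mu_3, \mu_4$ in the order listed, a convolution becomes a product under Fourier transform, yielding
\begin{align*}
\hat{\mu}_1(\alpha) &= \hat{\nu}(b_1\alpha)\hat{\nu}(b_2\alpha), \\
\hat{\mu}_2(\alpha) &= \hat{\nu}(b_1\alpha)\tilde{1}_I(b_2W_2\alpha), \\
\hat{\mu}_3(\alpha) &= \hat{\nu}(b_1\alpha)\hat{1}_I(b_2\alpha), \\
\hat{\mu}_4(\alpha) &= \tilde{1}_I(b_1W_2\alpha)\hat{1}_I(b_2\alpha).
\end{align*}
Each factor is trivially bounded by the corresponding $L^1$-norm ($\|\nu\|_1$ or $|I|$), and $\|\mu_j\|_1$ is the product of these trivial bounds. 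Consequently, whenever $|\hat{\mu}_j(\alpha)| \geq \delta\|\mu_j\|_1$, each individual factor must exceed $\delta$ times its own trivial bound.

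For $\mu_1, \mu_2, \mu_3$, the factor $\hat{\nu}(b_1\alpha)$ must exceed $\delta\|\nu\|_1$. Since $\|\nu\|_1 \asymp N$, the refined minor arc estimate for $\nu$ (Lemma \ref{RefinedMinor}) applied to $b_1\alpha$ supplies, outside the escape valve $N \ll W_1^{O(1)}$, a positive integer $q_0 \ll \delta^{-O(1)}$ with $\|q_0 b_1\alpha\|_\T \ll \delta^{-O(1)}/N$. Setting $q := q_0|b_1|$ gives $q \ll \delta^{-O(1)}$ and $\|q\alpha\|_\T \leq \|q_0 b_1\alpha\|_\T \ll \delta^{-O(1)}/N$, establishing the minor arc hypothesis.

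For $\mu_4$, neither factor involves $\hat{\nu}$. Instead we appeal to the quadratic factor $|\tilde{1}_I(b_1W_2\alpha)| \geq \delta|I|$ on the interval $I$ of length $|I| \asymp \sqrt{N/W_2}$. Weyl's inequality (Lemma \ref{weyl inequality}) then provides $q_0 \ll \delta^{-O(1)}$ with
$$
\|q_0 b_1 W_2\alpha\|_\T \ll \delta^{-O(1)}/|I|^2 \asymp \delta^{-O(1)}W_2/N.
$$
Setting $q := q_0|b_1|W_2$ gives $q \ll \delta^{-O(1)}$ and $\|q\alpha\|_\T \ll \delta^{-O(1)}/N$, where the implicit constants absorb the fixed parameters $b_1$ and $W_2$.

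The only technical subtlety, which appears in the fourth case, is that Weyl's inequality naturally produces a rational approximation at scale $1/|I|^2 = W_2/N$ rather than $1/N$. The extra factor of $W_2$ is absorbed into the implicit constants of the minor arc hypothesis, which are permitted to depend on the fixed parameters $W_1, W_2, b_1, b_2, \eta$ defining the majorant, exactly as the constants in the major arc verification (Lemma \ref{major hyp veri}) are permitted to depend on $\eta$.
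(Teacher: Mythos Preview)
Your treatment of the first three majorants matches the paper's: factor the Fourier transform, observe that the factor $\hat\nu(b_1\alpha)$ must be large, and invoke Lemma \ref{RefinedMinor}. That part is fine.

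The fourth case, however, contains a genuine gap. You write that ``the extra factor of $W_2$ is absorbed into the implicit constants of the minor arc hypothesis, which are permitted to depend on the fixed parameters $W_1, W_2$''. This is not so. The implicit constants in Definition \ref{minor hyp} must be \emph{absolute}; the footnote to Theorem \ref{abstract restriction} says exactly this, and if the constants in the minor arc hypothesis were allowed to grow with $W_2$ then the choice of $Q$ in \eqref{Q defn} would have to grow with $W_2$ as well, destroying the final restriction bound (compare the shape of the conclusion in Lemma \ref{mixed restriction}, where $W_2$ appears only as a saving, never as a loss). Your analogy with the $\eta$-dependence in Lemma \ref{major hyp veri} is misleading: there the dependence is through the tracked constant $K$ in the major arc hypothesis, and $\eta$ is a genuinely fixed parameter, whereas $W_1,W_2$ are not.

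What is missing is precisely the use of the \emph{second} factor $\hat 1_I(b_2\alpha)$. The paper argues as follows. Weyl's inequality on the quadratic factor gives a reduced fraction $a/q$ with $q \ll W_2\delta^{-O(1)}$ and $\|\alpha-a/q\|_\T \ll \delta^{-O(1)}/N$; so far this is your argument. But the linear factor must also be large, and Lemma \ref{linear exp} forces $\|b_2\alpha\|_\T \ll \delta^{-1}(W_2/N)^{1/2}$. The triangle inequality then shows $\|b_2 a/q\|_\T$ is strictly less than $1/q$ (outside the escape valve $N \ll (W_2/\delta)^{O(1)}$), which is only possible if $q \mid b_2$. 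Hence $q \leq |b_2| = O(1)$, with no dependence on $W_2$ at all. This step---using the linear sum to kill the $W_2$ in the denominator bound---is the whole point of the fourth case, and your proposal omits it.
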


\begin{proof}
By the convolution identity, the Fourier transform of each of the first three majorants is bounded in magnitude by $|\hat{\nu}(b_1\alpha)||I|$. The result then follows for these majorants using Lemma \ref{RefinedMinor} and the fact that $0< |b_1| \ll 1$.

 Letting $\nu_2$ denote the fourth majorant, suppose that $|\hat{\nu}_2(\alpha)| \geq \delta \norm{\nu_2}_1 $. We have $\norm{\nu_2}_1 \gg N/W_2$, unless $N \ll W_2$.  Hence by the convolution identity
\[
\biggabs{ \sum_{x \in I} e(b_1\alpha W_2x^2) \sum_{y \in I}e(b_2\alpha y)} \gg \delta N/W_2.
\]
Thus both of the following estimates hold
\begin{equation}\label{two estimates}
\abs{\sum_{x \in I} e(b_1\alpha W_2x^2)} \gg \delta \sqrt{N/W_2}\qquad\text{and}\qquad\abs{\sum_{y \in I} e(b_2\alpha y)} \gg \delta\sqrt{N/W_2}.
\end{equation}

Applying Weyl's inequality (Lemma \ref{weyl inequality}) to the first sum in \eqref{two estimates}, we deduce the existence of $q_0 \ll \delta^{-O(1)}$ such that $\norm{q_0b_1 W_2\alpha}_\T \ll \delta^{-O(1)} W_2/N$.  Dividing through by $q_0b_1W_2$ and cancelling common factors, it follows that there exist integers $1\leq a \leq q \ll W_2\delta^{-O(1)}$ with $\hcf(a, q) = 1$ and such that $\bignorm{\alpha - \tfrac{a}{q}}_\T \ll \delta^{-O(1)}/N$. We claim that $q \leq |b_2|$, hence completing our proof.

Applying the linear exponential sum estimate (Lemma \ref{linear exp}) to the second sum in \eqref{two estimates}, we deduce that $\norm{b_2\alpha}_\T\ll \delta^{-1}\sqrt{W_2/N}$, hence by the triangle inequality
\[
\bignorm{\tfrac{b_2a}{q}}_\T \ll \bignorm{\alpha - \tfrac{a}{q}}_\T + \norm{b_2\alpha}_\T \ll \frac{\delta^{-1}W_2^{1/2}}{N^{1/2}} + \frac{\delta^{-O(1)}}{N}.
\]
If $q\nmid b_2 $ then  $\norm{b_2a/q}_\T \geq 1/q$ and so either $q \gg \delta^{-1}\sqrt{N/W_2}$ or $q \gg \delta^{O(1)} N$. Each of these conclusions contradict our bound of $q \ll W_2\delta^{-O(1)}$, unless $N \ll (W_2/\delta)^{O(1)}$. The latter implies that $N \ll W_2^{O(1)}$ or $N\ll \delta^{-O(1)}$. If $N\ll \delta^{-O(1)}$ then the conclusion of the minor arc hypothesis is trivial. We may therefore assume that $q \mid b_2$, which certainly implies that $q \leq |b_2|$ (as required).
\end{proof}

\begin{lemma}[Hua-type hypotheses]\label{hua hyp veri}
Let $W_1$ be an even positive integer and $\xi \in [W_1]$. 
Define $\nu = \nu_{W_1, \xi} : [N] \to [0, \infty)$ as in \eqref{majorant}.
Let $W_2$ be a positive integer, $\eta \in (0, 1/2]$ and define the interval
$$
I := \sqbrac{\eta(N/W_2)^{1/2}, (N/W_2)^{1/2}}.
$$
Fix non-zero integers $b_1, b_2 = O(1)$ and write $B:= |b_1|+|b_2|$. Consider the following four majorants,  mapping each $n \in [-BN,BN ]$ to one of
\begin{equation}\label{majorants3}
\sum_{b_1x +b_2y = n} \nu(x)\nu(y), \quad  \sum_{b_1x + b_2W_2y^2=n} \nu(x)1_I(y), \quad  \sum_{b_1x + b_2y=n}  \nu(x)1_I(y), \quad  \sum_{b_1W_2x^2 + b_2y=n} 1_I(x)1_I(y).
\end{equation}
Then either $N \ll \brac{W_1W_2}^{O(1/\eps)}$ or all four majorants satisfy the Hua-type hypothesis (Definition \ref{hua hyp}) with exponent $\eps$.\end{lemma}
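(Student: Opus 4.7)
The plan is to verify, for each of the four majorants $r$ in \eqref{majorants3}, the bound $\norm{r}_2 \leq \norm{r}_1 (BN)^{\eps - 1}$. Each such $r$ is a convolution $f_1 * f_2$ of non-negative functions, so the convolution identity gives $\norm{r}_1 = \norm{f_1}_1 \norm{f_2}_1$ exactly, while Parseval's theorem gives $\norm{r}_2^2 = \int_\T |\hat{f_1}(\alpha)|^2 |\hat{f_2}(\alpha)|^2 \intd\alpha$. Applying Cauchy--Schwarz to split this integral reduces the task to $\norm{r}_2^2 \leq \norm{\hat{f_1}}_4^2 \norm{\hat{f_2}}_4^2$, so we are left to estimate the fourth moment of each of three types of exponential sum arising as $\hat{f_i}$: the weighted square sum $\hat\nu(c\alpha)$, the interval sum $\hat{1}_I(c\alpha)$, and the quadratic push-forward $F(\alpha) = \sum_{y \in I} e(cW_2 \alpha y^2)$, for some non-zero integer $c = O(1)$.

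For the interval sum, orthogonality gives $\int |\hat{1}_I|^4 \ll |I|^3$, the number of additive quadruples in $I$. For the quadratic push-forward, the substitution $\beta = cW_2\alpha$ (using periodicity modulo $1$) reduces the fourth moment to Hua's classical mean-value theorem for squares, yielding $\int |F|^4 \ll |I|^{2+o(1)}$. For $\hat\nu$, the change of variables $m = W_1 u + \xi$ recasts the fourth moment as a weighted sum over integer quadruples with $m_1^2 + m_2^2 = m_3^2 + m_4^2$, $m_i \equiv \xi \pmod{W_1}$, and $|m_i| \ll \sqrt{W_1 N}$, weighted by $m_1 m_2 m_3 m_4$; since the number of representations of an integer as a sum of two squares is $N^{o(1)}$, a divisor-bound computation then yields $\norm{\hat\nu}_4^4 \ll (W_1 N)^{3 + o(1)}$.

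Substituting these bounds into Cauchy--Schwarz for each of $r_1, r_2, r_3, r_4$ produces upper bounds of the form $\norm{r_i}_2^2 \ll (W_1 W_2)^{O(1)} N^{a_i + o(1)}$ for explicit exponents $a_i$, which we compare with the target $\norm{r_i}_1^2 N^{2\eps - 2}$ computed directly from the convolution identity. In each of the four cases the resulting inequality rearranges to $(W_1 W_2)^{O(1)} \leq N^{\Omega(\eps)}$, which is satisfied as soon as $N \geq (W_1 W_2)^{C/\eps}$ for a sufficiently large absolute constant $C$; this matches the exceptional alternative $N \ll (W_1 W_2)^{O(1/\eps)}$ in the statement.

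The main obstacle is the bound $\norm{\hat\nu}_4^4 \ll (W_1 N)^{3+o(1)}$: because the weight $\nu(n) \asymp \sqrt{W_1 N}$ is uniformly heavy across the support of $\nu$, one cannot merely quote Hua's lemma but must balance the maximum-weight contribution $(W_1 N)^2$ against the divisor bound $N^{o(1)}$ on unweighted quadruples. This is precisely the step for which we invoke the mixed mean-value estimate (Lemma \ref{mixed mean value}) suggested by Chow. With that lemma in hand, the four cases reduce to careful exponent bookkeeping, the verification being essentially uniform across the majorants modulo the placement of the factors $|I|$, $\norm{\nu}_1$, etc.
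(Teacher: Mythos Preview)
Your Cauchy--Schwarz splitting $\norm{r}_2^2 \leq \norm{\hat f_1}_4^2 \norm{\hat f_2}_4^2$ is too lossy for the third and fourth majorants, and the exponent bookkeeping does not close. Take the third majorant $r_3(n) = \sum_{b_1x+b_2y=n}\nu(x)1_I(y)$. You need $\norm{r_3}_2^2 \ll \norm{r_3}_1^2 N^{\eps-1} \asymp N^{2+\eps}/W_2$ (this is the form of the Hua hypothesis actually used in Theorem~\ref{abstract restriction}). Your route gives
\[
\norm{r_3}_2^2 \leq \norm{\hat\nu}_4^2\,\norm{\hat 1_I}_4^2 \ll (W_1 N)^{O(1)} N^{3/2}\cdot |I|^{3/2} \asymp W_1^{O(1)} N^{9/4} W_2^{-3/4},
\]
and $N^{9/4}W_2^{-3/4} \leq N^{2+\eps}W_2^{-1}$ forces $N^{1/4-\eps} \leq W_2^{-1/4}$, which is impossible for any $\eps<1/4$. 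The same obstruction (with $\norm{F}_4^2 \ll |I|^{1+o(1)}$ replacing $\norm{\hat\nu}_4^2$) kills the fourth majorant. The first two majorants do survive your treatment, but only because in those cases Cauchy--Schwarz happens not to discard the crucial range constraint.

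What the paper does instead is compute $\norm{r}_2^2 = \int_\T |\hat f_1(b_1\alpha)|^2|\hat f_2(b_2\alpha)|^2\,\intd\alpha$ \emph{directly} via orthogonality as a weighted count of quadruples, then split into diagonal and off-diagonal. For $r_3$ the off-diagonal constraint $b_1(n_1-n_2)=b_2(y_2-y_1)$ forces $|n_1-n_2| \ll |I| \asymp (N/W_2)^{1/2}$, a factor of $N^{1/2}$ shorter than what survives after Cauchy--Schwarz; combined with the divisor bound on $\nu$-supported differences this recovers the missing $N^{-1/4}$. For $r_4$ the same range restriction on $y_1-y_2$ plays the analogous role. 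Your invocation of Lemma~\ref{mixed mean value} is also misplaced: that lemma bounds a mixed integral $\int |\tilde f|^2|\hat g|^2$ and has nothing to do with $\norm{\hat\nu}_4^4$, which follows from the divisor bound for sums of two squares alone.
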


\begin{proof}
We observe that either $N \ll W_1^{O(1)}$ or we have the following estimates
\begin{equation}\label{order of nu}
\norm{\nu}_1 \asymp N \quad\text{and}\quad \norm{\nu}_\infty \ll \sqrt{NW_1}
\end{equation}
We also observe the standard divisor-type estimate: for $n\in \Z\setminus\set{0}$ we have
\begin{equation}\label{divisor}
\sum_{x^2 - y^2 = n} 1 \ll_\eps |n|^\eps.
\end{equation}

We begin with the first majorant in \eqref{majorants3}. In this case, the square of the $L^2$-norm is equal to the count
\[
\sum_{b_1n_1+b_2n_2 = b_1n_3 + b_2n_4} \nu(n_1)\nu(n_2)\nu(n_3)\nu(n_4).
\]
The diagonal contribution to this count, when $n_1 = n_3$, is at most
\[
\brac{\sum_n \nu(n)^2}^2 \ll \norm{\nu}_\infty^2 \norm{\nu}_1^2 \ll W_1N^3 \asymp W_1\norm{\nu}_1^4  N^{-1}.
\]
Using Cauchy--Schwarz and the divisor-bound \eqref{divisor}, the non-diagonal count is given by
\begin{multline*}
\sum_{0< |n| < BN}\brac{\sum_{b_1(n_1-n_3) = n} \nu(n_1)\nu(n_3)}\brac{\sum_{b_2(n_4-n_2) = n} \nu(n_2)\nu(n_4)}\\ \ll (W_1 N)^2 \sum_{0< |n| < BN}\brac{\sum_{(W_1x+\xi)^2-(W_1y+\xi)^2 = 2W_1n} 1}^2\\ \ll_\eps (W_1 N)^2 N (W_1N)^\eps \ll W_1^{O(1)} \norm{\nu}_1^4 N^{\eps -1}.
\end{multline*}
We conclude that for any $\eps >0$ our majorant's second moment has an upper bound  of the form 
$$
O_\eps\brac{W_1^{O(1)} \norm{\nu}_1^4 N^{\eps -1}}.
$$
On assuming that it is not the case that $N \ll_\eps W_1^{O(1/\eps)}$, this can be replaced by an upper bound of the form
$$
\norm{\nu}_1^4 N^{2\eps -1}.
$$
This establishes the lemma for the first majorant.

We turn now to the second majorant in \eqref{majorants3}. In this case the square of the $L^2$-norm is equal to
\[
\sum_{b_1n_1+b_2W_2y_1^2 = b_1n_2 + b_2W_2y_2^2} \nu(n_1)\nu(n_2)1_I(y_1)1_I(y_2).
\]
Provided that $N \geq 2W_2$, the diagonal contribution to this count is at most
\begin{equation}\label{diag for 2}
\sum_n \nu(n)^2\sum_y 1_I(y) \ll \norm{\nu}_\infty \norm{\nu}_1 |I| \ll  W_1 N^{2} W_2^{-1/2}\ll W_1W_2^{1/2}\norm{\nu}_1^2|I|^2  N^{-1}.
\end{equation}
Using the divisor-bound \eqref{divisor}, the non-diagonal count is given by
\begin{multline*}
\sum_{0< |n| < BN}\ \sum_{b_1(n_1-n_2) = n} \nu(n_1)\nu(n_2)\sum_{b_2W_2(y_1^2 - y_2^2) = n} 1_I(y_1) 1_I(y_2) \ll_\eps (W_1W_2)^{O(1)} N^{2+\eps}\\ \ll (W_1W_2)^{O(1)} \norm{\nu}_1^2|I|^2N^{\eps-1}.
\end{multline*}
Using a similar argument to before, this establishes the result for the second majorant.

For the third majorant in \eqref{majorants3}, the diagonal contribution is the same as that in \eqref{diag for 2}. The non-diagonal count is given by
\begin{equation*}
\sum_{0< |n| < BN}\ \sum_{b_1(n_1-n_2) = n} \nu(n_1)\nu(n_2)\sum_{b_2(y_1 - y_2) = n} 1_I(y_1) 1_I(y_2).
\end{equation*}
Notice that if $y_1, y_2 \in I$ then $|y_1 - y_2| < (N/W_2)^{1/2}$. Hence the non-diagonal count is in fact equal to 
\begin{multline*}
\sum_{0< |n| < B(N/W_2)^{1/2}}\ \sum_{b_1(n_1-n_2) = n} \nu(n_1)\nu(n_2)\sum_{b_2(y_1 - y_2 )= n} 1_I(y_1) 1_I(y_2)\\ \ll_\eps (N/W_2)^{1/2} NW_1 N^\eps (N/W_2)^{1/2} \ll (W_1W_2)^{O(1)}  \norm{\nu}_1^2|I|^2N^{\eps-1}.
\end{multline*}
This establishes the result for the third majorant.

For the fourth majorant in \eqref{majorants3}, the diagonal contribution is given by
\begin{equation*}
\brac{\sum_y 1_I(y) }^2 =  |I|^2 = W_2^{O(1)} |I|^4 N^{-1}.
\end{equation*}
 The non-diagonal count is given by
\begin{equation*}
\sum_{0< |n| < B(N/W_2)^{1/2}}\ \sum_{b_1W_2(x_1^2-x_2^2) = n} 1_I(y_1) 1_I(y_2)\sum_{b_2(y_1 - y_2) = n} 1_I(y_1) 1_I(y_2) \ll_\eps (N/W_2)^{1+\eps}\\ \ll W_2^{O(1)} |I|^4 N^{-1} .
\end{equation*}
This establishes the result for the fourth majorant.
\end{proof}

\section{Controlling the counting operator}\label{control section}

 The purpose of this section is to prove an analogue of the Fourier control lemma (Lemma \ref{linear fourier control}) for the counting operator encountered in the quadratic counting  theorem (Theorem \ref{quadratic form satisfies rado}).

Before embarking on this section the reader may wish to recall the definition of $\nu = \nu_{W, \xi}$ (Definition \ref{square majorant}), as well as our notation for the Fourier transform (Definition \ref{fourier transform}) and quadratic Fourier transform (Definition \ref{quadratic fourier transform}).

\begin{lemma}[Mixed restriction estimates]\label{mixed restriction} Let $W_1$ and $W_2$ be $w$-smooth positive integers such that $W_1$ is divisible by $2 \prod_{p \leq w} p$. Given $\xi \in [W_1]$ with $\hcf(\xi, W_1) =1$, define $\nu = \nu_{W_1, \xi} : [N] \to [0, \infty)$ as in \eqref{majorant}.
Given $\eta \in (0, 1/2]$, define the interval
$$
I := \sqbrac{\eta(N/W_2)^{1/2}, (N/W_2)^{1/2}}.
$$
Let $p > 2$ and fix non-zero integers $b_1, b_2 = O(1)$. Then either $N \ll_p (W_1W_2)^{O_p(1)}$ or, for any $f : [N]\to \C$ with $|f| \leq 1_{[N]} + \nu$ and any $B \subset [(N/W_2)^{1/2}]$, we have 
\begin{equation*}
\int_{\T} \bigabs{\hat{f }(b_1\alpha) \hat{1}_B(b_2\alpha)}^p \intd\alpha, \
\int_{\T} \bigabs{\hat{f }(b_1\alpha) \tilde{1}_B(b_2W_2\alpha)}^p \intd\alpha
  \ll_{p} N^{\frac{3p}{2} - 1} W_2^{-\frac{p}{2}},
  \end{equation*}
whilst
\[
 \int_{\T} \bigabs{\tilde{1}_B(b_1W_2\alpha)\hat{1}_B(b_2\alpha)}^p \intd\alpha \ll_{p,\eta} N^{p -1}W_2^{-p}\quad \text{and}\quad \int_{\T} \bigabs{\hat{f }(\alpha)}^{2p}  \intd\alpha \ll_{p}N^{2p-1}.
\]
\end{lemma}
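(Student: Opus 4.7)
The plan is to prove each of the four mixed restriction estimates by applying Theorem \ref{abstract restriction} to the appropriate majorant from the collection \eqref{majorants3}, for which the major arc, minor arc, and Hua-type hypotheses were verified in Lemmas \ref{major hyp veri}, \ref{minor hyp veri}, and \ref{hua hyp veri} respectively. The bridge to these majorants is the identity
\[
\sum_n \Bigl( \sum_{b_1 x + b_2 y = n} u(x)\, v(y) \Bigr) e(\alpha n) = \hat u(b_1 \alpha)\, \hat v(b_2 \alpha),
\]
together with its quadratic variant when $y$ is replaced by $y^2$: the Fourier transform of each majorant in \eqref{majorants3} reproduces precisely the bilinear product appearing in one of the four integrands of Lemma \ref{mixed restriction}, up to replacing $f$ by $1_{[N]} + \nu$ and $1_B$ by $1_I$.

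For the first three integrals, the strategy is as follows. Writing $f = \phi\,(1_{[N]} + \nu)$ for some 1-bounded $\phi$, define $h(n) := \sum_{b_1 x + b_2 y = n} f(x)\, 1_B(y)$ (with the analogous quadratic variant for integral 2). An $f = f_1 + f_2$ decomposition with $|f_1|\leq 1_{[N]}$ and $|f_2|\leq \nu$ yields a pointwise bound $|h_2(n)| \leq \mu_i(n)$ for the appropriate majorant $\mu_i$; writing $h_2 = \psi \mu_i$ with $|\psi|\leq 1$ and invoking Theorem \ref{abstract restriction} then gives $\int_\T |\hat h_2|^p\, d\alpha \ll_p \|\mu_i\|_1^p\, N^{-1}$. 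Since $\|\nu\|_1 \asymp N$ and $|I| \asymp (N/W_2)^{1/2}$, direct computation yields $\|\mu_2\|_1, \|\mu_3\|_1 \asymp N (N/W_2)^{1/2}$ and $\|\mu_4\|_1 \asymp N/W_2$, producing exactly the claimed upper bounds. The $f_1$-contribution is handled by an analogous restriction estimate in which $\nu$ is replaced by $1_{[N]}$; the required hypotheses for this auxiliary majorant follow from essentially the same proofs as Lemmas \ref{major hyp veri}--\ref{hua hyp veri}, using only Lemma \ref{linear exp} and a divisor-type estimate.

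For the fourth integral, which lacks a bilinear product structure, I would decompose $f = f_1 + f_2$ as above, estimate $\int_\T |\hat f_1|^{2p}\, d\alpha \leq \|\hat f_1\|_\infty^{2p-2} \|\hat f_1\|_2^2 \ll N^{2p-1}$ by a Parseval--interpolation argument, and bound $\int_\T |\hat f_2|^{2p}\, d\alpha \ll \|\nu\|_1^{2p}\, N^{-1} \asymp N^{2p-1}$ by Theorem \ref{abstract restriction} applied to $\nu$ itself. The hypotheses for $\nu$ come from Lemma \ref{MajorArcAsymptotic} (major arc), Lemma \ref{RefinedMinor} (minor arc), and the direct second-moment estimate $\|\nu\|_2^2 \ll \sqrt{W_1 N}\, \|\nu\|_1$ (Hua-type).

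The main technical obstacle is reconciling the hypothesis $B \subseteq [(N/W_2)^{1/2}]$ with the pointwise domination $|h| \leq \mu_i$, which nominally requires $B \subseteq I$. For integrals 1 and 2, the hypothesis verifications for $\mu_2$ and $\mu_3$ hold with $\eta$-independent constants (only linear Fourier analysis of $1_I$ is invoked), so the enlarged interval $\tilde I = [1,(N/W_2)^{1/2}]$ may be used throughout with no loss. For integral 3, however, Lemma \ref{integral bound} produces an $\eta^{-O(1)}$ factor in the major arc constant for $\mu_4$; one therefore splits $B = (B \cap I) \cup (B \setminus I)$, applies Theorem \ref{abstract restriction} to the first piece, and bounds the contribution of the second, which has at most $O(\eta\sqrt{N/W_2})$ elements, via Parseval together with trivial estimates. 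Making these splitting estimates quantitative---so that the final bound loses only a constant depending on $\eta$, matching the $\ll_{p,\eta}$ in the statement---will be the most delicate step of the proof.
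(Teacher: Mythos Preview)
Your overall architecture matches the paper's: reduce to the cases $|f|\leq\nu$ and $|f|\leq 1_{[N]}$ via the decomposition $f=f_1+f_2$, and for the $\nu$-part use the convolution identity to realise each integrand as $|\widehat{\phi\mu}|^p$ for one of the majorants in \eqref{majorants3}, then invoke Theorem~\ref{abstract restriction} together with Lemmas~\ref{major hyp veri}--\ref{hua hyp veri}. That part is exactly what the paper does.

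The main divergence is in how you handle the $f_1$-contribution (the case $|f_1|\leq 1_{[N]}$). You propose to build a parallel family of majorants with $1_{[N]}$ in place of $\nu$ and re-run the abstract restriction machinery. The paper instead dispatches this case by completely elementary means: bound $|\hat 1_B|,\,|\tilde 1_B|\leq (N/W_2)^{1/2}$ trivially, bound $|\hat f_1|^{p-2}\leq N^{p-2}$ (respectively $|\hat f_1|^{2p-2}\leq N^{2p-2}$) trivially, and use Parseval for the remaining $\int|\hat f_1|^2\leq N$. This immediately gives the stated upper bounds without any further hypothesis verification. Your route would work, but it is substantially more labour than needed.

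Your discussion of the $B\subset[(N/W_2)^{1/2}]$ versus $B\subset I$ issue is perceptive. For the first two integrals and the fourth, the hypothesis verifications for the relevant majorants carry no $\eta$-dependence, so one may simply take $I'=[1,(N/W_2)^{1/2}]$ in place of $I$ and the pointwise domination $|h|\leq\mu$ holds for any $B\subset[(N/W_2)^{1/2}]$; no splitting is required. For the third integral the $\ll_{p,\eta}$ in the conclusion reflects the $\eta^{-O(1)}$ major-arc constant for the fourth majorant, and the paper's proof is terse here; in the only application (Lemma~\ref{fourier control}) one has $B\subset I$, so the issue does not arise in practice. Your proposed splitting $B=(B\cap I)\cup(B\setminus I)$ would also work if one insists on the stated generality, but it is not the intended argument.
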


\begin{proof}
Let us first suppose that $|f| \leq \nu$. Then the bounds follow from the abstract restriction estimate (Lemma \ref{abstract restriction}) in conjunction with the verification of the major/minor/Hua-type hypotheses (Lemmas \ref{minor hyp veri}, \ref{major hyp veri}, \ref{hua hyp veri}).

Next let us suppose that $|f| \leq 1_{[N]}$.  We estimate $\bigabs{\hat{1}_B}^p$ and $\bigabs{\tilde{1}_B}^p$ using the trivial bound of $\ll (N/W_2)^{p/2}$. We estimate $\bigabs{\hat{f }}^{p-2}$ using the trivial bound of $ N^{p-2}$, and $\bigabs{\hat{f }}^{2p-2}$ with $N^{2p-2}$. Finally we employ Parseval to give the bound
\[
\int_{\T} \bigabs{\hat{f }(b_1\alpha)}^2 \intd\alpha = \sum_n |f(n)|^2 \leq N.
\]
Combining these inequalities gives the claimed bounds.

Finally, we assume the general bound $|f| \leq 1_{[N]} + \nu$. Write $f =  \theta|f|$, where $|\theta(n)| \leq 1$ for all $n$. Put $f_1 := \theta \min\set{|f|, 1_{[N]}}$ and $f_2 := f - f_1$. Then $f=f_1 + f_2$ with $|f_1| \leq 1_{[N]}$ and $|f_2| \leq \nu$.  Applying the triangle inequality, the estimates now follow from our previous arguments.
\end{proof}

\begin{lemma}[Fourier control]\label{fourier control}
For each $i = 1, 2, 3$, let $L_i$ denote a non-singular linear form in $s_i$ variables with $s_1 \geq 2$, $s_1 + s_2 \geq 3$ and $s_1+s_2+s_3 \geq 5$ (we allow for $s_2 = 0$ or $s_3 = 0$).  Let $W_1$ and $W_2$ be $w$-smooth positive integers such that $W_1$ is divisible by $2 \prod_{p \leq w} p$. Given $\xi \in [W_1]$ with $\hcf(\xi, W_1) =1$, define $\nu = \nu_{W_1, \xi} : [N] \to [0, \infty)$ as in \eqref{majorant}.
Given $\eta \in (0, 1/2]$, define the interval
$$
I := \sqbrac{\eta(N/W_2)^{1/2}, (N/W_2)^{1/2}}.
$$
Suppose that either $W_2=1$ or $s_3 > 0$.  Then either $N \ll (W_1W_2)^{O(1)}$ or for any $f_1, \dots, f_{s_1} : \Z \to \C$,  each satisfying 
$
|f_i| \leq 1_{[N]}+ \nu $, and  any $B \subset I$ we have 
\begin{multline*}
\abs{\sum_{L_1(x) =W_2L_2(y^2) + L_3(z)} f_1(x_1)\dotsm f_{s_1}(x_{s_1})1_{B}(y_1)\dotsm 1_{B}(y_{s_2}) 1_{B}(z_1)\dotsm 1_{B_j}(z_{s_3})}\\ \ll_{\eta} N^{s_1 + \frac12(s_2 + s_3) -1}W_2^{-\frac12(s_2 + s_3)} \min_i\brac{\frac{\bignorm{\hat{f}_i}_\infty}{N}}^{1/10}.
\end{multline*}
\end{lemma}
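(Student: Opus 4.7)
The plan is to adapt the proof of Lemma~\ref{linear fourier control}, exchanging Parseval and Bourgain's $L^6$-restriction for the mixed restriction estimates of Lemma~\ref{mixed restriction}. First, I would apply orthogonality to rewrite the count as a Fourier integral
\begin{equation*}
\int_\T \prod_{i=1}^{s_1} \hat{f}_i(c_i^{(1)}\alpha) \prod_{j=1}^{s_2} \tilde{1}_B(W_2 c_j^{(2)}\alpha) \prod_{k=1}^{s_3} \hat{1}_B(c_k^{(3)}\alpha)\intd\alpha,
\end{equation*}
where $c_\ell^{(r)}$ denotes the coefficient of the $\ell$-th variable in $L_r$. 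Relabelling if necessary, I would assume the minimum in $\min_i\|\hat{f}_i\|_\infty$ is attained at $i=1$.

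The central idea is to group the $s_1+s_2+s_3$ factors into \emph{chunks}, each of whose $L^p$-norm is controlled by one of the four mixed restriction estimates in Lemma~\ref{mixed restriction}, and then apply H\"older's inequality. Specifically, I would first form $\min(s_2,s_3)$ pairs of the form $\tilde{1}_B\cdot\hat{1}_B$ (bounded by the third estimate); then pair each of the $|s_2-s_3|$ leftover quadratic or linear $B$-sums with a distinct $\hat{f}_i$ for $i\geq 2$ where possible (bounded by the first or second estimate); and finally treat any remaining $\hat{f}_i$ as singletons, bounded by $\int|\hat{f}|^{2p}\ll N^{2p-1}$. In the boundary case $s_3=0$ (for which the hypothesis forces $W_2=1$), any surplus $\tilde{1}_B$ singletons would be bounded using Bourgain's $L^6$-restriction $\|\tilde{1}_B\|_6\ll(N/W_2)^{1/4}$. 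A direct accounting of the $N$- and $W_2$-exponents (once one enforces the H\"older constraint $\sum 1/p_i=1$) produces the sharp bound $N^{s_1+(s_2+s_3)/2-1}W_2^{-(s_2+s_3)/2}$, matching the target up to the factor $(\|\hat{f}_1\|_\infty/N)^{1/10}$.

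To produce the remaining factor I would interpolate in the chunk containing $\hat{f}_1$: writing $|\hat{f}_1|=\|\hat{f}_1\|_\infty^{1/10}|\hat{f}_1|^{9/10}$ and applying a two-line H\"older argument between the corresponding mixed restriction estimate (taken at exponent $p$) and the singleton bound $\int|\hat{f}_1|^{9p/10}\ll N^{9p/10-1}$, the modified chunk's $L^p$-norm gains the factor $\|\hat{f}_1\|_\infty^{1/10}$ while giving back at most $N^{1/10+\eta}$. Here $\eta$ can be made arbitrarily small by taking the H\"older exponents strictly above (and close to) the critical value $2$ for each mixed estimate and above $6$ wherever Bourgain is invoked. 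The resulting $N^{\eta}$ slack is comfortably absorbed by the $N^{-1/10}$ savings, yielding the claimed bound.

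The main obstacle is the combinatorial bookkeeping across boundary cases, in particular: (i)~when $s_2+s_3$ is odd, forcing at least one $B$-singleton; (ii)~when $s_1=2$, where the number of chunks available for H\"older is minimal; and (iii)~when $s_3=0$ and $W_2=1$, so that $\hat{1}_B$ is absent. The hypotheses $s_1\geq 2$, $s_1+s_2\geq 3$, $s_1+s_2+s_3\geq 5$, combined with $W_2=1$ whenever $s_3=0$, guarantee that in every admissible configuration at least three chunks are available, so that every restriction estimate is invoked strictly above its critical exponent and the argument closes.
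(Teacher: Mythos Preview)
Your strategy is correct and essentially coincides with the paper's: express the count as a Fourier integral via orthogonality, extract $\|\hat f_i\|_\infty^{1/10}$ by writing $|\hat f_i|=\|\hat f_i\|_\infty^{1/10}|\hat f_i|^{9/10}$, and then apply H\"older against the four mixed restriction estimates of Lemma~\ref{mixed restriction} (together with Bourgain's square restriction when $s_3=0$). The paper carries this out by an explicit case split on $(s_1,s_2,s_3)$ --- five subcases in all, each trivially bounding all but two or three of the $B$-factors and then invoking H\"older with the fixed exponents $(4.5,\,2.5,\,2.5)$ or $(s_1+s_2)$ --- whereas you describe a single chunking algorithm. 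Both organisations give the same bound; the paper's enumeration simply avoids the bookkeeping of verifying that the H\"older exponents can always be chosen compatibly.

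A few small corrections to your sketch: Bourgain's square restriction gives $\|\tilde 1_B\|_q\ll (N/W_2)^{1/2-1/q}$ for $q>4$, so the critical exponent is $4$ (not $6$) and $\|\tilde 1_B\|_6\ll (N/W_2)^{1/3}$ rather than $(N/W_2)^{1/4}$. More importantly, your discussion of an $N^\eta$ slack is unnecessary: the restriction estimates in Lemma~\ref{mixed restriction} and Bourgain's theorem hold with sharp $N$-exponents (no $\eps$-loss) for every fixed $p$ strictly above the critical value, the only dependence being in the implicit constant $\ll_p$. Thus the interpolation loses exactly $N^{1/10}$ against the gain of $\|\hat f_i\|_\infty^{1/10}$, yielding $(\|\hat f_i\|_\infty/N)^{1/10}$ on the nose. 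With these adjustments your argument goes through.
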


\begin{proof}
Write 
\begin{equation*}
L_i(x) = c_1^{(i)} x_1 + \dots + c_{s_i}^{(i)} x_{s_i}.
\end{equation*}
\\[5pt]
\textbf{\underline{Case 1: $s_3=0$.}}
\\[5pt]
In this case our assumptions imply that $W_2 = 1$. The orthogonality relations then show that our counting operator is equal to
\begin{equation*}
\sum_{L_1(x) =L_2(y^2)} f_1(x_1)\dotsm f_{s_1}(x_{s_1})1_{B}(y_1)\dotsm 1_{B}(y_{s_2}) \\ = \int_\T \prod_i\hat{f}_i\brac{c_i^{(1)}\alpha} \prod_j \tilde1_B\brac{c_j^{(2)}\alpha}\intd\alpha.
\end{equation*}
We apply H\"older's inequality to bound the Fourier integral by
\begin{equation}\label{holder result}
\bignorm{\tilde 1_B}_{s_1+s_2}^{s_2}\bignorm{\hat f_{i}}_\infty^{0.1}\bignorm{\hat f_{i}}_{0.9(s_1+s_2)}^{0.9} \prod_{j \neq i}\bignorm{\hat f_{j}}_{s_1+s_2}.
\end{equation}
Since $s_1 + s_2 \geq 5$, Bourgain's restriction estimate \cite{BourgainLambda} gives that $\bignorm{\tilde 1_B}_{s_1+s_2} \ll N^{\frac{1}{2}-\frac{1}{s_1+s_2}}$. Since $0.9(s_1+s_2) > 4$, Lemma \ref{mixed restriction} gives that
\[
\bignorm{\hat f_{i}}_{0.9(s_1+s_2)}^{0.9} \ll_\eta N^{0.9 - \frac{1}{s_1+s_2}} \quad \text{and}\quad \bignorm{\hat f_{j}}_{s_1+s_2} \ll_\eta N^{1 - \frac{1}{s_1+s_2}}.
\]
The claimed bound follows on incorporating these estimates into \eqref{holder result}.
\\[5pt]
\textbf{\underline{Case 2: $s_3\geq 1$.}}
\\[5pt]
In this case we must assume that $W_2$ is arbitrary.   The orthogonality relations show our counting operator equals 
\begin{multline}\label{fourier integral}
\sum_{L_1(x) =W_2L_2(y^2) + L_3(z)} f_1(x_1)\dotsm f_{s_1}(x_{s_1})1_{B}(y_1)\dotsm 1_{B}(y_{s_2}) 1_B(z_1) \dotsm 1_B(z_{s_3})\\ = \int_\T \prod_i\hat{f}_i\brac{c_i^{(1)}\alpha} \prod_j \tilde1_B\brac{W_2c_j^{(2)}\alpha}\prod_k\hat1_B\brac{c_k^{(3)}\alpha}\intd\alpha.
\end{multline}
We break into further subcases.  Notice that our assumptions that $s_1 \geq 2$, $s_1 +s_2 \geq 3$ and $s_1+s_2 + s_3 \geq 5$ imply that we are in one of the following five situations.
\\[5pt]
\textbf{\underline{Case 2a: $s_1 \geq 4$, $s_3 \geq 1$.}}
\\[5pt]
Fix distinct $i, j \in \set{1,\dots,s_1}$.  Applying the trivial estimate to $\tilde{1}_B$ and all but one copy of $\hat{1}_B$, H\"older's inequality shows that the Fourier integral \eqref{fourier integral} is at most 
\begin{equation*}
(N/W_2)^{\frac12\brac{s_2+s_3-1}}\bignorm{\hat f_{i}}_\infty^{0.1}\bignorm{\hat f_{i}}_{0.9(s_1+1)}^{0.9}\bignorm{\hat{f}_{j}\bigbrac{c_{j}^{(1)}\alpha} \hat{1}_B\bigbrac{c_1^{(3)}\alpha}}_{(s_1+1)/2} \prod_{k \notin\set{ i,j}}\bignorm{\hat f_{k}}_{(s_1+1)} .
\end{equation*}
Since $0.9(s_1+1) \geq 4.5$ and $(s_1+1)/2 \geq 2.5$, Lemma \ref{mixed restriction} gives that
\begin{multline*}
\bignorm{\hat f_{i}}_{0.9(s_1+1)}^{0.9} \ll_\eta N^{0.9 - \frac{1}{s_1+1}}, \quad  \bignorm{\hat f_{k}}_{s_1+1} \ll_\eta N^{1 - \frac{1}{s_1+1}},\\ \bignorm{\hat{f}_{j}\bigbrac{c_{j}^{(1)}\alpha} \hat{1}_B\bigbrac{c_1^{(3)}\alpha}}_{(s_1+1)/2} \ll_\eta N^{\frac{3}{2} - \frac{2}{s_1+1}}W_2^{-1/2} .
\end{multline*}
The claimed bound follows.
\\[5pt]
\textbf{\underline{Case 2b: $s_1 = 3$, $s_2 = 0$, $s_3 \geq 2$.}}
\\[5pt]
Let $\set{i, j, k} = \set{1,2,3}$. Applying the trivial estimate to all but two copies of $\hat{1}_B$, H\"older's inequality shows that the Fourier integral \eqref{fourier integral} is at most 
\begin{equation*}
(N/W_2)^{\frac12\brac{s_3-2}}\bignorm{\hat f_{i}}_\infty^{0.1}\bignorm{\hat f_{i}}_{4.5}^{0.9}\bignorm{\hat{f}_{j}\bigbrac{c_{j}^{(1)}\alpha} \hat{1}_B\bigbrac{c_1^{(3)}\alpha}}_{2.5} \bignorm{\hat{f}_{k}\bigbrac{c_{k}^{(1)}\alpha} \hat{1}_B\bigbrac{c_2^{(3)}\alpha}}_{2.5} .
\end{equation*}
The claimed bound follows again on employing Lemma \ref{mixed restriction}.
\\[5pt]
\textbf{\underline{Case 2c: $s_1 = 3$, $s_2 \geq 1$, $s_3 \geq 1$.}}
\\[5pt]
Let $\set{i, j, k} = \set{1,2,3}$. Applying the trivial estimate to all but one copy of $\tilde{1}_B$ and all but one copy of $\hat{1}_B$, H\"older's inequality shows that the Fourier integral \eqref{fourier integral} is at most 
\begin{equation*}
(N/W_2)^{\frac12\brac{s_2+s_3-2}}\bignorm{\hat f_{i}}_\infty^{0.1}\bignorm{\hat f_{i}}_{4.5}^{0.9}\bignorm{\hat{f}_{j}\bigbrac{c_{j}^{(1)}\alpha} \tilde{1}_B\bigbrac{W_2c_1^{(2)}\alpha}}_{2.5} \bignorm{\hat{f}_{k}\bigbrac{c_{k}^{(1)}\alpha} \hat{1}_B\bigbrac{c_1^{(3)}\alpha}}_{2.5} .
\end{equation*}
The claimed bound follows from Lemma \ref{mixed restriction}.
\\[5pt]
\textbf{\underline{Case 2d: $s_1 = 2$, $s_2 = 1$, $s_3 \geq 2$.}}
\\[5pt]
Let $\set{i, j} = \set{1,2}$. We apply the trivial estimate to all but two copies of  $\hat{1}_B$. H\"older's inequality then shows that the Fourier integral \eqref{fourier integral} is at most 
\begin{equation*}
(N/W_2)^{\frac12\brac{s_3-2}}\bignorm{\hat f_{i}}_\infty^{0.1}\bignorm{\hat f_{i}}_{4.5}^{0.9}\bignorm{ \tilde{1}_B\bigbrac{W_2c_1^{(2)}\alpha} \hat{1}_B\bigbrac{c_1^{(3)}\alpha}}_{2.5} \bignorm{\hat{f}_{j}\bigbrac{c_{j}^{(1)}\alpha} \hat{1}_B\bigbrac{c_2^{(3)}\alpha}}_{2.5} .
\end{equation*}
The claimed bound then follows from Lemma \ref{mixed restriction}.
\\[5pt]
\textbf{\underline{Case 2e: $s_1 = 2$, $s_2 \geq 2$, $s_3 \geq 1$.}}
\\[5pt]
Let $\set{i, j} = \set{1,2}$. We apply the trivial estimate to all but two copies of  $\tilde{1}_B$ and all but one copy of $\hat{1}_B$. H\"older's inequality then shows that the Fourier integral \eqref{fourier integral} is at most 
\begin{equation*}
(N/W_2)^{\tfrac12\brac{s_3-2}}\bignorm{\hat f_{i}}_\infty^{0.1}\bignorm{\hat f_{i}}_{4.5}^{0.9}\bignorm{ \tilde{1}_B\bigbrac{W_2c_1^{(2)}\alpha} \hat{1}_B\bigbrac{c_1^{(3)}\alpha}}_{2.5} \bignorm{\hat{f}_{j}\bigbrac{c_{j}^{(1)}\alpha} \tilde{1}_B\bigbrac{W_2c_2^{(2)}\alpha}}_{2.5} .
\end{equation*}
The claimed bound  follows from Lemma \ref{mixed restriction}.
\end{proof}

\begin{lemma}\label{mixed mean value}
Let $a_1, a_2, b_1, b_2 \in \Z\setminus \set{0}$ and let $\eta \in (0,1)$.  Given functions $f : (\eta N, N] \to [-1,1]$ and $g : [N] \to [-1, 1]$ we have the bound
$$
\int_\T \abs{\tilde{f}(a_1\alpha) \tilde{f}(a_2\alpha)\hat{g}(b_1\alpha) \hat{g}(b_2\alpha)} \intd\alpha \ll_{b_1, b_2} \eta^{-1} N^2.
$$
\end{lemma}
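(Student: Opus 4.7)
My first move would be to decouple the four Fourier factors by the Cauchy--Schwarz inequality, pairing each quadratic factor with a linear factor:
\[
\int_\T \abs{\tilde f(a_1\alpha)\tilde f(a_2\alpha)\hat g(b_1\alpha)\hat g(b_2\alpha)} \intd\alpha
\leq \prod_{i=1,2} \biggbrac{\int_\T \abs{\tilde f(a_i\alpha)}^2\abs{\hat g(b_i\alpha)}^2 \intd\alpha}^{1/2}.
\]
The problem therefore reduces to proving, for fixed non-zero integers $a,b$, the bound
\[
\int_\T \abs{\tilde f(a\alpha)}^2 \abs{\hat g(b\alpha)}^2 \intd\alpha \ll_{b} \eta^{-1} N^2.
\]

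By expanding the squares and applying the orthogonality relations, the left-hand side equals
\[
\sum_{\substack{x,y\in(\eta N,N]\\ u,v\in[N]\\ a(x^2-y^2) = b(u-v)}} f(x)\overline{f(y)}\,\overline{g(u)}g(v).
\]
Using $|f|,|g|\leq 1$ and the triangle inequality, this is bounded by the cardinality of the solution set
\[
T := \bigset{(x,y,u,v)\in(\eta N, N]^2\times[N]^2 : a(x^2-y^2) = b(u-v)}.
\]

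The core of the argument is an elementary counting bound for $T$ exploiting the \emph{lower} bound on the variables. For any solution, $|a(x^2-y^2)| = |b||u-v| \leq |b|N$, hence $|x-y|(x+y) \leq |b/a|N$. Since $x,y > \eta N$ we have $x+y > 2\eta N$, so
\[
|x-y| \leq \frac{|b|}{2|a|\eta} \leq \frac{|b|}{2\eta}.
\]
Thus for each of the $N$ choices of $x$ there are $O(|b|/\eta)$ admissible values of $y$; once $(x,y)$ is fixed, the value $u-v = a(x^2-y^2)/b$ is determined (and must be an integer of absolute value less than $N$), so $(u,v)$ ranges over at most $N$ pairs. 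Altogether $|T| \ll_{b} \eta^{-1}N^2$.

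No step is genuinely difficult, but the one subtle point worth highlighting is that the implicit constant must avoid any dependence on $a_1,a_2$. This is automatic from the inequality $|x-y|\leq |b|/(2|a|\eta)$: growing $|a|$ only \emph{shrinks} the admissible range of $y$, so replacing $|a|$ by its trivial lower bound $1$ is safe. Combining this count with the Cauchy--Schwarz reduction gives the required estimate $\ll_{b_1,b_2}\eta^{-1}N^2$.
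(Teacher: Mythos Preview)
Your proof is correct and follows essentially the same route as the paper: Cauchy--Schwarz to reduce to $\int_\T |\tilde f(a\alpha)\hat g(b\alpha)|^2\,\intd\alpha$, orthogonality to pass to the counting problem $a(x_1^2-x_2^2)=b(y_1-y_2)$, and the key observation that the lower bound $x_i>\eta N$ forces $|x_1-x_2|\ll_b \eta^{-1}$. The only cosmetic difference is that the paper separates the diagonal $y_1=y_2$ before invoking the gap bound, whereas your uniform count already absorbs it; your explicit remark that the constant is independent of $a_1,a_2$ is a nice touch not spelled out in the original.
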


\begin{proof}
By Cauchy--Schwarz it suffices to bound an integral of the form
$$
\int_\T \bigabs{\tilde{f}(a\alpha) \hat{g}(b\alpha) }^2 \intd\alpha 
$$
for some non-zero integers $a,b$. By orthogonality, this is at most the number of solutions to the equation
\begin{equation}\label{hua type equation}
a(x_1^2- x_2^2) = b(y_1-y_2), \qquad \brac{ x_i \in (\eta N, N],\ y_j \in [N]}.
\end{equation}
The diagonal contribution (when $y_1 = y_2$)  yields at most $N^2$ solutions.  Fix distinct $y_1, y_2 \in [N]$.  Then any solution $(x_1, x_2)$ to \eqref{hua type equation} satisfies
$$
|x_1-x_2| = \frac{|b||y_1-y_2|}{|a|(x_1+x_2)} \leq b \eta^{-1}.
$$
The estimate follows.
\end{proof}

\begin{lemma}[$L^1$ control]\label{l1 control}
Let $a_1, \dots, a_r \in \Z\setminus\set{0}$, $b_1, \dots, b_s \in \Z\setminus\set{0}$ and $c_1, \dots, c_t \in \Z\setminus \set{0}$.  Suppose that  
\[
r \geq 2,\quad r+s \geq 3,\quad s+t \geq 1,\quad  r+s+t \geq 5.
\]  
Then for any $ B \subset [N]$ and $\eta \in (0, 1)$ we have
\begin{equation}\label{2nd estimate}
\sum_{\sum_ia_ix_i^2= \sum_jb_jy_j^2 +\sum_k c_kz_k} \prod_i 1_{(\eta N, N]}(x_i) \prod_j 1_{B}(y_j)\prod_k 1_B(z_k)\\ \ll_{c_i} \eta^{-O(1)} N^{r+s+t -2} \brac{\frac{|B|}{N}}^{1/2}.
\end{equation}
\end{lemma}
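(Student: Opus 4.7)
The plan is to express $T$ as the Fourier integral
$$T = \int_\T \prod_{i=1}^r \tilde{1}_{(\eta N, N]}(a_i\alpha) \prod_{j=1}^s \tilde{1}_B(b_j\alpha) \prod_{k=1}^t \hat{1}_B(c_k\alpha) \, \intd\alpha,$$
and to extract the factor $(|B|/N)^{1/2}$ by Cauchy--Schwarz applied to a single $B$-factor, leaving a residual $L^2$-norm to be controlled by Lemma~\ref{mixed mean value}. Concretely, I would pick a $B$-factor $H$ (preferring $\hat{1}_B(c_k\alpha)$ whenever $t\geq 1$) and write the integrand as $FG' \cdot H$; Cauchy--Schwarz then gives $T\leq \|H\|_2\cdot\|FG'\|_2 = |B|^{1/2}\|FG'\|_2$, so it suffices to prove $\|FG'\|_2\ll \eta^{-O(1)}N^{r+s+t-5/2}$.

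To do so I would expand $\|FG'\|_2^2=\int|F|^2|G'|^2\,\intd\alpha$ and apply the trivial $L^\infty$ bounds $|\tilde{1}_{(\eta N,N]}|^2\leq N^2$ and $|\hat{1}_B|^2,|\tilde{1}_B|^2\leq |B|^2$ to all but one $|\tilde{1}_{(\eta N,N]}|^2$ factor and all but one $|\hat{1}_B|^2$ factor; the hypothesis $s+t\geq 2$ together with a suitable choice of $H$ guarantees that at least one $\hat{1}_B$ survives in $G'$. The remaining core
$$\int_\T |\tilde{1}_{(\eta N, N]}(a_1\alpha)|^2\,|\hat{1}_B(c_1\alpha)|^2\,\intd\alpha$$
is bounded by $\eta^{-1}N^2$ via Lemma~\ref{mixed mean value}, using the identifications $|\tilde{f}|^2=\tilde{f}(a\alpha)\tilde{f}(-a\alpha)$ and $|\hat{g}|^2=\hat{g}(c\alpha)\hat{g}(-c\alpha)$ for real-valued $f,g$. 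Assembling the factors yields
$$T\ll \eta^{-O(1)}N^r|B|^{s+t-3/2}=\eta^{-O(1)}N^{r+s+t-5/2}|B|^{1/2}\,(|B|/N)^{s+t-2},$$
and the last factor is at most $1$ since $|B|\leq N$ and $s+t\geq 2$.

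The boundary configurations require dedicated arguments. When $s+t=1$, the assumption $r+s+t\geq 5$ forces $r\geq 4$, and the uniform Hua-type bound $\#\{x\in(\eta N,N]^r:\sum a_i x_i^2=m\}\ll_\eta N^{r-2}$ allows one to estimate $T\leq |B|\cdot \eta^{-O(1)}N^{r-2}\ll \eta^{-O(1)}N^{r-3/2}|B|^{1/2}$ directly, using $|B|^{1/2}\leq N^{1/2}$. When $t=0$ (so every $B$-factor is of the form $\tilde{1}_B$), no $\hat{1}_B$ is available to retain in $G'$, and one must invoke a $\tilde g\tilde g$-analogue of Lemma~\ref{mixed mean value} obtained from Cauchy--Schwarz together with the divisor bound controlling pairs $y_1^2-y_2^2$ off-diagonal.

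The main obstacle I anticipate is exactly these boundary configurations $s+t\leq 2$ and $t=0$, where Lemma~\ref{mixed mean value} in its sharpest $\tilde f\tilde f\hat g\hat g$ form either does not apply or is just tight. Ensuring that the divisor-bound inputs used to plug these gaps are absorbed cleanly into the $\eta^{-O(1)}$ constant --- without leaving a stray $N^{\epsilon}$ in the final estimate --- is the principal technical point.
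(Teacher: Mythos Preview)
Your main argument contains an algebraic slip that masks a genuine gap. You write
\[
N^{r}|B|^{s+t-3/2} \;=\; N^{r+s+t-5/2}|B|^{1/2}\,(|B|/N)^{s+t-2},
\]
but the right-hand side actually equals $N^{r-1/2}|B|^{s+t-3/2}$, so your bound is off by a factor of $N^{1/2}$. This is not merely a typo: the Cauchy--Schwarz step $T\leq\|H\|_2\|FG'\|_2$ followed by trivial $L^\infty$ bounds on all but one $\tilde{1}_{(\eta N,N]}$ and one $\hat{1}_B$ genuinely loses $N^{1/2}$. For instance, take $r=2$, $s=0$, $t=3$: your method gives $T\ll \eta^{-1/2}N^2|B|^{3/2}$, whereas the target is $N^{5/2}|B|^{1/2}$, and the former exceeds the latter whenever $|B|>N^{1/2}$. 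The root cause is that estimating $|\hat{1}_B|^2\leq |B|^2$ inside $\|FG'\|_2^2$ throws away the oscillation of $\hat{1}_B$ entirely.

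The paper avoids this by extracting $|B|^{1/2}$ via $\|\hat{1}_B\|_\infty^{1/2}$ (or $\|\tilde{1}_B\|_\infty^{1/2}$) rather than via Cauchy--Schwarz, and then applies H\"older directly to the remaining integrand using Bourgain's $\epsilon$-free restriction estimates $\int|\tilde{1}|^p\ll N^{p-2}$ for $p>4$. When $r+s\geq 5$ it uses $p=9/2$; when $r+s\leq 4$ (forcing $t\geq 1$) it combines Lemma~\ref{mixed mean value} with $p=6$ (if $t\geq 2$) or $p=14/3$ (if $t=1$), always pairing one $\tilde{1}_{(\eta N,N]}$ with one $\hat{1}_B$ inside an $L^2$-norm. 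Your boundary cases inherit the same defect: for $s+t=1$ with $r=4$ the ``Hua-type bound'' $\#\{x:\sum a_ix_i^2=m\}\ll N^{r-2}$ is false in general (the true bound carries a $\log N$, e.g.\ for $x_1^2+x_2^2-x_3^2-x_4^2=0$), and your $t=0$ fallback via divisor bounds would indeed leave a stray $N^\epsilon$ --- exactly the obstacle you flag but do not resolve. The fix in all of these is to keep part of the $B$-factor inside the integral and exploit restriction at an exponent strictly above $4$.
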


\begin{proof}
The left-hand side of \eqref{2nd estimate} can be written as the Fourier integral
\[
\int_\T \prod_i\tilde{1}_{(\eta N, N]}(a_i \alpha) \prod_j \tilde{1}_{B}(b_j\alpha) \prod_k \hat{1}_B(c_k \alpha) \intd\alpha.
\]
If $r+s \geq 5$ then \eqref{2nd estimate} follows from extracting $|B|^{1/2}$ from the Fourier integral, then applying   H\"older's inequality and the estimates
\[
\int_\T \Biggabs{\sum_{x \in (\eta N, N]} e\brac{\alpha x^2}}^{4.5}\intd\alpha,\ \int_\T \Biggabs{\sum_{x \in B} e\brac{\alpha x^2}}^{4.5} \intd\alpha \ll N^{2.5}.
\]
These bounds  are a consequence of \cite{BourgainLambda}.

Let us therefore suppose that $r+s \leq 4$, in which case we must have $t \geq 1$.  We divide into two cases.\\[5pt]
\textbf{\underline{Case 1: $t \geq 2$:}}\\[5pt]
Since $r+s \geq 3$, our Fourier integral contains at least three quadratic exponential sums, at least two of which are equal to $\tilde{1}_{(\eta N, N]}$ (since $r\geq 2$). Employing the bounds $1_{(\eta N, N]} \leq 1_{[N]}$ or  $1_B \leq 1_{[N]}$ on the physical side, we may assume that our third quadratic exponential sum is equal to $\tilde{1}_{[N]}$.  Then using the orthogonality relations and H\"older's inequality, we can  bound the left-hand side of \eqref{2nd estimate} by
\begin{multline*}
N^{r+s+t-5} \bignorm{\hat{1}_B}_\infty^{\frac{1}{2}}
 \brac{\int_\T \bigabs{\tilde{1}_{(\eta N, N]}(a_1\alpha)\hat{1}_{B}(c_1\alpha)}^{2}\intd\alpha}^{1/4}\\
 \brac{\int_\T \bigabs{\tilde{1}_{(\eta N, N]}(a_2\alpha)\hat{1}_{B}(c_2\alpha)}^{2}\intd\alpha}^{1/2}
 \brac{\int_\T \bigabs{\tilde{1}_{(\eta N, N]}(\alpha)}^{6}\intd\alpha}^{1/12}
  \brac{\int_\T \bigabs{\tilde{1}_{[N]}(\alpha)}^{6}\intd\alpha}^{1/6}
  .
\end{multline*}
The estimate now follows from Lemma \ref{mixed mean value} and Bourgain's restriction estimate \cite{BourgainLambda}.
\\[5pt]
\textbf{\underline{Case 2: $t = 1$:}}
\\[5pt]
In this case our Fourier integral contains at least four quadratic exponential sums, at least one of which equals $\tilde{1}_{(\eta N, N]}$. Proceeding as in Case 1, the left-hand side of \eqref{2nd estimate} can be bounded by
\begin{multline*}
N^{r+s+t-5} \bignorm{\hat{1}_B}_\infty^{\frac{1}{2}}
 \brac{\int_\T \bigabs{\tilde{1}_{(\eta N, N]}(a_1\alpha)\hat{1}_{B}(c_1\alpha)}^{2}\intd\alpha}^{1/4}\\
 \brac{\int_\T \bigabs{\tilde{1}_{(\eta N, N]}(\alpha)}^{14/3}\intd\alpha}^{3/28}
  \brac{\int_\T \bigabs{\tilde{1}_{[N]}(\alpha)}^{14/3}\intd\alpha}^{9/14}
  .
\end{multline*}
Again the estimate follows from Lemma \ref{mixed mean value} and Bourgain's restriction estimate \cite{BourgainLambda}.
\end{proof}

\section{A quadratic density result}\label{quadratic density sec}

The purpose of this section is to prove the following.
\begin{theorem}[Density--colouring result]\label{quadratic-linear density}
For each $i = 1, 2, 3$, let $L_i$ denote a non-singular linear form in $s_i$ variables with $s_1 \geq 2$, $s_1 + s_2 \geq 3$ and $s_1+s_2+s_3 \geq 5$ (we allow for $s_2 = 0$ or $s_3 = 0$). Suppose that $L_1(1, \dots, 1) = 0$.
Let $\delta >0$ and let $r$ be a positive integer.  Then either $N \ll_{ \delta, r} 1$ or the following holds. For any sets of integers $A_1, \dots, A_r \subset [N]$  each satisfying $|A_i| \geq \delta N$ and for any $r$-colouring $B_1 \cup \dots \cup B_r = [N]$ there exists $B \in \set{B_1, \dots, B_r}$ such that for all $A\in \set{A_1, \dots, A_r}$ we have
\begin{equation*}
\sum_{L_1(x^2) = L_2(y^2) + L_3(z)} \prod_i1_A(x_i) \prod_j1_{B}(y_j)\prod_k1_{B}(z_k)  \gg_{ \delta, r}  N^{s_1 + s_2 + s_3 -2  }.
\end{equation*}
\end{theorem}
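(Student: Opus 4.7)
The plan is to derive this density--colouring result for squares from its linear counterpart, Theorem \ref{linear density}, via a Fourier-analytic transference principle in the spirit of \cite{PrendivilleFour}, following the strategy sketched for Theorem \ref{special quad} in \S\ref{sketch section}. The transference will be powered by the majorant $\nu = \nu_{W_1, \xi}$ of Definition \ref{square majorant}, whose pseudorandomness has been established in \S\ref{Fourier Pseudorandomness}, together with the mixed restriction estimates of Lemma \ref{mixed restriction}.

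The first step is a $W$-trick that linearises $L_1(x^2)$. I would choose a slowly growing $w = w(N)$ and set $W_1 := 2\prod_{p\leq w} p$. For each dense set $A \in \{A_1, \ldots, A_r\}$, pigeonholing over residues coprime to $W_1$ produces a residue $\xi = \xi(A) \in [W_1]$ for which $A^{(\xi)} := \{u : W_1 u + \xi \in A\}$ has density $\gg \delta/\phi(W_1)$ in $[(N-\xi)/W_1]$. The key algebraic observation is that, because $L_1(1, \ldots, 1) = 0$, for $x_i = W_1 u_i + \xi$ the substitution $v_i := (x_i^2 - \xi^2)/(2W_1) \in \Z$ (integrality uses $W_1$ even) satisfies
\[
L_1(x^2) = 2W_1\, L_1(v).
\]
Restricting the count in Theorem \ref{quadratic-linear density} to $x_i \equiv \xi \pmod{W_1}$ therefore transforms the equation into the linear--quadratic equation $2W_1 L_1(v) = L_2(y^2) + L_3(z)$, with $v_i$ ranging in $[M]$ for $M := \lfloor N^2/(2W_1)\rfloor$. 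Letting $V_A \subset [M]$ denote the image of $A^{(\xi)}$ under this substitution, put $g_A(v) := \nu(v)\cdot 1_{V_A}(v)$, so that $g_A \leq \nu$ and $\|g_A\|_1 \gg \delta \phi(W_1)^{-1} \|\nu\|_1$.

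The second step is the transference itself. Using the major arc, minor arc, and Hua-type hypotheses established for $\nu$ in Lemmas \ref{major hyp veri}--\ref{hua hyp veri}, together with the abstract restriction estimate (Theorem \ref{abstract restriction}), I would construct for each $A$ a \emph{dense model} $h_A : [M] \to [0,1]$ with $\|h_A\|_1 \sim \|g_A\|_1$ and such that $\widehat{g_A} - \widehat{h_A}$ is small in the mixed $L^p$ norms afforded by Lemma \ref{mixed restriction}. Applying Theorem \ref{linear density} to the dense models $h_{A_1}, \ldots, h_{A_r}$, taking the linear form $2W_1 L_1$, the multiplier $W := 1$, and the rescaled colouring $\{B_i \cap [\eta M^{1/2}, M^{1/2}]\}_{i=1}^r$, selects a single colour class $B \in \{B_1, \ldots, B_r\}$ together with a large count involving each $h_A$. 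A telescoping identity that swaps each $h_A$ for $g_A$ one coordinate at a time, combined with the Fourier control lemma (Lemma \ref{fourier control}) applied to each resulting error term, transfers the lower bound back to one involving $g_A$. Unwinding $v_i = (x_i^2 - \xi^2)/(2W_1)$ then yields the claimed bound $\gg N^{s_1+s_2+s_3-2}$ on the original count.

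The hard part will be the transference step: the dense model $h_A$ must faithfully preserve the value of the mixed linear--quadratic counting operator, which couples the linear variables $v_i$ with both the quadratic colour variables $y_j$ and the linear colour variables $z_k$. This is precisely why \S\ref{Fourier Pseudorandomness} verifies the abstract restriction hypotheses not only for $\nu$ itself but for each of the four mixed majorants in \eqref{majorants}, and why Lemma \ref{fourier control} is phrased with sufficient generality to estimate each type of exponential sum that appears in the telescoping.
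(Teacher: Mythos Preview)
Your overall strategy (linearise via a $W$-trick, build a dense model, apply Theorem~\ref{linear density}, transfer back via Lemma~\ref{fourier control}) matches the paper's, but two of the steps fail as written.

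\textbf{The coprimality of $\xi$.} You ``pigeonhole over residues coprime to $W_1$'' and claim a residue $\xi$ with $\gcd(\xi,W_1)=1$ for which $A^{(\xi)}$ has positive density. This is false in general: if $A = 2\Z\cap[N]$ then $A^{(\xi)}=\emptyset$ for \emph{every} odd $\xi$, and since $2\mid W_1$ every $\xi$ coprime to $W_1$ is odd. The requirement $\gcd(\xi,W_1)=1$ is not optional, as it is needed in Lemmas~\ref{W local weyl}--\ref{fourier decay} and hence throughout the transference. The paper's fix is a preliminary step you have omitted: for each $A_i$ one first extracts a bounded $w$-smooth factor $\zeta_i$ so that $A_i\cap\zeta_i\cdot\mathcal{R}_w(N/\zeta_i)$ still has density $\gg\delta$; after dividing through by $\zeta_i$ the remaining elements are $w$-rough, hence automatically coprime to the $w$-smooth modulus $W_i := W/(2\zeta_i^2)$, where $W := 4\zeta_1^2\cdots\zeta_r^2\prod_{p\leq w}p$ is common to all $i$.

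\textbf{Where the factor of $W$ goes.} You apply Theorem~\ref{linear density} with ``the linear form $2W_1 L_1$, the multiplier $W:=1$''. But the implicit constant $\eta$ in Theorem~\ref{linear density} depends on the coefficients of the linear forms (see the remark following Definition~\ref{linear form}); in particular the Bohr set $B_2$ in that proof lies inside $c_1\Z$, so with $c_1\asymp W_1$ one gets $\eta\ll W_1^{-\Omega(1)}=\exp(-\Omega(w))$, far too small to beat the transference error $\ll(\log w)^{-\Omega(1)}$ coming from Lemma~\ref{fourier decay}. The paper instead rescales the colour variables, setting $B_j':=\{x:Wx\in B_j\}\subset[N/W]$; the linearised equation then takes exactly the shape $L_1(n)=WL_2(y^2)+L_3(z)$ with the \emph{original} $O(1)$ coefficients on $L_1$, so Theorem~\ref{linear density} applies with $\eta\gg_{\delta,r}1$ independent of $W$.
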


Let $\mathcal{R}_w(N)$ denote the set of $w$-rough numbers in $[N]$, that is those integers all of whose prime divisors exceed $w$. We have the following disjoint partition
\[
[N] = \bigcup_{\zeta \text{ is $w$-smooth}} \zeta \cdot \mathcal{R}_w(N/\zeta).
\]
For each $i$ we would like to find $\zeta_i$ which is not too large and satisfies
\begin{equation}\label{smooth part}
\abs{A_i \cap \brac{\zeta_i \cdot \mathcal{R}_w(N/\zeta_i)}} \geq \tfrac{\delta}{2} \abs{\mathcal{R}_w(N/\zeta_i)}.
\end{equation}
By \cite[Lemma A.3]{CLPRado} there are at most $10^wNM^{-1/2}$ elements of $[N]$ divisible by a $w$-smooth number greater than $M$.   It follows that for each $A_i$ there exists a $w$-smooth number $\zeta_i$ satisfying
$$
\zeta_i \ll \delta^{-O(1)}\exp\brac{O(w)}
$$  
and such that \eqref{smooth part} holds.

Define 
\begin{equation}\label{W defn}
W := 4 \zeta_1^2\dotsm \zeta_r^2\prod_{p \leq w} p \qquad \text{and}\qquad W_i := \frac{W}{2\zeta_i^2}.
\end{equation}
Since $W_i$ is  $w$-smooth and divisible by the primorial $\prod_{p\leq w} p$, we can partition $\mathcal{R}_w(N/\zeta_i)$ into congruence classes
$$
\mathcal{R}_w(N/\zeta_i)\cap\brac{\xi\bmod W_i} = (W_i\cdot\Z + \xi ) \cap [N/\zeta_i], \qquad (\xi \in (\Z/W_i\Z)^\times).
$$
By the pigeon-hole principle, there exists $\xi_i \in (\Z/W_i\Z)^\times$ such that 
$$
\abs{A_i \cap \brac{\zeta_i \cdot \brac{(W_i\cdot\Z + \xi_i ) \cap [N/\zeta_i]}}} \geq \tfrac{\delta}{2} \abs{ (W_i\cdot\Z + \xi_i ) \cap [N/\zeta_i]}.
$$
It follows that there exists a set $A_i'$ of integers such that for every $x \in A_i'$ we have $\zeta_i(W_i x + \xi_i) \in A_i$, and moreover we can ensure that 
\begin{equation}\label{lower bound on A_i'}
A_i' \subset \left(\frac{\delta N}{4\zeta_i W_i},\frac{N-\zeta_i\xi_i}{\zeta_i W_i}\right] \quad \text{and} \quad |A_i'| \geq \frac{\delta N}{4\zeta_iW_i} -O(1).
\end{equation}

We define a colouring of $\sqbrac{\frac{N}{W}}$ by setting
$$
B_j' := \set{x \in \N : Wx \in B_j}.
$$
It follows that
\begin{multline}\label{A to A'}
\sum_{L_1(x^2) = L_2(y^2) + L_3(z)} \prod_l1_{A_i}(x_l)  \prod_m1_{B_j}(y_m)\prod_n1_{B_j}(z_n) \geq \\
\sum_{L_1(\recip{2}W_ix^2+\xi_ix) = WL_2(y^2) + L_3(z)}\prod_l1_{A_i'}(x_l)  \prod_m1_{B_j'}(y_m)\prod_n1_{B_j'}(z_n).
\end{multline}
Set 
$$
X:= \frac{N^2}{W},
$$
and let $\nu_i := \nu_{W_i, \xi_i} : [X] \to [0, \infty)$ be as in \eqref{majorant}. The containment in \eqref{lower bound on A_i'} ensures that for every $x \in A_i'$ we have 
\begin{equation}\label{nu bounds}
 \frac{\delta N}{\zeta_i} \ll \nu_i(\trecip{2}W_i x^2 + \xi_i x) \leq  \frac{N}{ \zeta_i}.
\end{equation}
Define 
$$
f_i(n) := \begin{cases} \nu_i(n) & \text{if } n = \trecip{2}W_i x^2 + \xi_i x \text{ for some } x \in A_i',\\
0 & \text{otherwise.}\end{cases}
$$
Then we have that
\begin{multline}\label{A' to f}
\sum_{L_1(\recip{2}W_ix^2+\xi_ix) = WL_2(y^2) + L_3(z)}\prod_l1_{A_i'}(x_l)  \prod_m1_{B_j'}(y_m)\prod_n1_{B_j'}(z_n) \geq \\
\brac{\frac{\zeta_i}{N}}^{s_1}\sum_{L_1(n) = WL_2(y^2) + L_3(z)}\prod_lf_i(n_l)  \prod_m1_{B_j'}(y_m)\prod_n1_{B_j'}(z_n).
\end{multline}

Notice that  \eqref{lower bound on A_i'} and \eqref{nu bounds} give
$$
\sum_{n \in [X]} f_i(n) \gg \frac{\delta N}{\zeta_i}\brac{\frac{\delta N}{\zeta_iW_i} -O(1)},
$$
so that either $N \ll_{\delta, r, w} 1$ or 
$$
\sum_{n \in [X]} f_i(n) \gg \delta^2 X.
$$
Using Lemma \ref{fourier decay} and the dense model lemma recorded in \cite[Theorem 5.1]{PrendivilleFour}, there exists $0 \leq g_i \leq 1_{[X]}$ satisfying
\begin{equation}\label{dense model approx quad}
\bignorm{\hat{f}_i-\hat{g}_i}_{\infty} \ll  (\log w)^{-3/2}X.
\end{equation}
It follows that either $w \ll_\delta 1$ or, on comparing Fourier coefficients at 0, we deduce that $\sum_{x \in [X]} g_i(x) \gg \delta^2 X$.  Thresholding, define
$$
\tilde{A}_i := \set{x \in [X] : g_i(x) \geq c\delta^2},
$$
with $c$ a small positive absolute constant.  The popularity principle \cite[Ex.1.1.4]{TaoVuAdditive} shows that $|\tilde{A}_i| \gg \delta^2 X$. Hence by Theorem \ref{linear density} there exists $\eta \gg_{\delta, r} 1$ and there exists
\[
\tilde{B}_j := B_j' \cap [\eta N/W, N/W]
\]
such that either $N \ll_{\delta, r, w} 1$ or for each $i=1,\dots, r$ we have
$$
\sum_{L_1(n) = WL_2(y^2) + L_3(z)}\prod_l1_{\tilde{A}_i}(n_l)\prod_m1_{\tilde{B}_j}(y_m)\prod_n1_{\tilde{B}_j}(z_n)  \geq \eta X^{s_1+\frac12(s_2+s_3) -1}W^{-\frac12(s_2+s_3)} 
  .
$$
Using our lower bound for $g_i$ on $\tilde{A}_i$ we deduce that
$$
\sum_{L_1(n) = WL_2(y^2) + L_3(z)}\prod_lg_i(n_l)\prod_m1_{\tilde{B}_j}(y_m)\prod_n1_{\tilde{B}_j}(z_n)  \gg_{\delta, r} X^{s_1+\frac12(s_2+s_3) -1}W^{-\frac12(s_2+s_3)} 
  .
$$
By a telescoping identity there exist functions $h_1$, $\dots$, $h_{s_1}$ $\in \set{f_i, g_i, f_i - g_i}$, at least one of which is equal to $f_i-g_i$,  such that
\begin{multline*}
\abs{\sum_{L_1(n) = WL_2(y^2) + L_3(z)}\brac{\prod_lg_i(n_l)-\prod_lf_i(n_l)}\prod_m1_{\tilde{B}_j}(y_m)\prod_n1_{\tilde{B}_j}(z_n)}\ll \\ \abs{\sum_{L_1(n) = WL_2(y^2) + L_3(z)}\prod_lh_l(n_l)\prod_m1_{\tilde{B}_j}(y_m)\prod_n1_{\tilde{B}_j}(z_n) }.
\end{multline*} 
By Lemma \ref{fourier control} and \eqref{dense model approx quad}, either $N \ll_{\delta, r, w} 1$ or the latter quantity is at most
\begin{equation*}
\ll_{\delta, r} \brac{\frac{\bignorm{\hat{f}_i - \hat{g}_i}_\infty}{X}}^{\frac{1}{10}} X^{s_1+\frac12(s_2+s_3) -1}W^{-\frac12(s_2+s_3)} \\
 \ll X^{s_1+\frac12(s_2+s_3) -1}W^{-\frac12(s_2+s_3)} \log^{-3/20}w .
\end{equation*}

It follows that either $w \ll_{\delta, r} 1$ or that
$$
\sum_{L_1(n) = WL_2(y^2) + L_3(z)}\prod_lf_i(n_l)\prod_m1_{\tilde{B}_j}(y_m)\prod_n1_{\tilde{B}_j}(z_n)   \gg_{\delta, r} 
 X^{s_1+\frac12(s_2+s_3) -1}W^{-\frac12(s_2+s_3)}.
$$
Taking $w$ sufficiently large in terms of $\delta$ and $r$, we deduce that either $N \ll_{\delta, r} 1$ or, on recalling \eqref{A to A'} and \eqref{A' to f}, we have
\begin{multline*}
\sum_{L_1(x^2) = L_2(y^2) + L_3(z)} \prod_l1_{A_i}(x_l)  \prod_m1_{B_j}(y_m)\prod_n1_{B_j}(z_n) \gg_{\delta, r} \\
\brac{\frac{\zeta_i}{N}}^{s_1} X^{s_1+\frac12(s_2+s_3) -1}W^{-\frac12(s_2+s_3)} \gg_{\delta, r} N^{s_1+s_2+s_3-2}.
\end{multline*}
This completes the proof of Theorem \ref{quadratic-linear density}.

\section{Deduction of colouring results from density results}\label{colouring section}

\subsection{When the linear form satisfies Rado's criterion}\label{linear colouring subsection}
The purpose of this section is to prove the following strengthening of Theorem \ref{linear form satisfies rado}. To streamline notation, we suppress the dependence of implicit constants on the coefficients $a_i$ and $b_j$.

\begin{theorem}\label{strong linear form satisfies rado}
Let $a_1,\dots, a_s, b_1,\dots, b_t \in \Z\setminus\set{0}$ with $s,t \geq 1$ and suppose that there exists $S \neq \emptyset$ such that $\sum_{i \in S}a_i = 0$. For any positive integers $r$ and $N$, either $N \ll_{r} 1$ or for any colouring $C_1 \cup \dots \cup C_r = [N]$ there exists $1\leq n \leq r$, a colour class $C_j$ and an interval $I$ of length $N^{1/2^{n-1}}$ such that on setting $M:=  N^{1/2^{n}}$ we have
\begin{equation}\label{strengthened equation}
\sum_{a_1x_1 + \dots + a_s x_s = b_1y_1^2 + \dots + b_ty_t^2}\ \prod_{i \in S}1_{C_j\cap I}(x_i)\prod_{i\notin S} 1_{C_j\cap [M]}(x_i)\prod_{i=1}^t1_{C_j\cap[M]}(y_i)\\ \gg_{ r} M^{|S|+s+t-2}.
\end{equation}
\end{theorem}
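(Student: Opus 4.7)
The plan is to apply Theorem \ref{linear density} across the $r$ scales $Y_k := N^{1/2^k}$ for $k = 0, 1, \ldots, r$, and then select the correct $n$ and colour class by a pigeon-hole dichotomy on the outputs. First I would rewrite the equation as $L_1(x) = L_2(y^2) + L_3(z)$ with $L_1(x) := \sum_{i \in S} a_i x_i$, $L_2(y^2) := \sum_j b_j y_j^2$ and $L_3(z) := -\sum_{i \notin S} a_i z_i$, so that $s_1 = |S| \geq 2$, $s_2 = t \geq 1$, $s_3 = s - |S| \geq 0$, $W = 1$, and the Rado condition $L_1(1, \ldots, 1) = 0$ holds. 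The crucial consequence is that $L_1$ is translation invariant in its variables, which allows the $S$-variables to live in a sub-interval $I$ not centred at the origin while preserving the equation.

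Next I would fix a rapidly growing function $\mathcal{F}$ (to be specified at the end) and carry out a standard cleaving iteration to obtain $M = O_{r, \mathcal{F}}(1)$ with the property that for every colour $C_i$ and every $n \in \{1, \ldots, r\}$, either $C_i$ is $1/M$-dense on some sub-interval of $[N]$ of length $Y_{n-1}$ (in which case $C_i$ is \emph{eligible at level $n$}), or $C_i$ is $1/\mathcal{F}(M)$-sparse on every such sub-interval. For each $n$ and each eligible $C_i$, pick a witnessing interval $I_i^{(n)} = a_i^{(n)} + [Y_{n-1}]$ and define the pullback $A_i^{(n)} := \{x \in [Y_{n-1}] : a_i^{(n)} + x \in C_i\}$; non-eligible colours are padded with a copy of an eligible pullback so that $|A_i^{(n)}| \geq Y_{n-1}/M$ for all $i$.

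Then for each $n \in \{1, \ldots, r\}$, I would apply Theorem \ref{linear density} with $N$ replaced by $Y_{n-1}$, $W = 1$ and $\delta = 1/M$ to the sets $\{A_i^{(n)}\}$ and the restriction of the colouring to $[\eta Y_n, Y_n]$. This yields a colour class $C_{j'}^{(n)}$ and a lower bound of order $\gg_{r, M} Y_n^{|S| + s + t - 2}$ on the count of solutions with the $S$-variables in each $A_i^{(n)}$ and the remaining variables in $C_{j'}^{(n)}$. A trivial upper bound on this count, obtained by parameterising solutions via $|S| - 1$ of the $x_i$'s for $i \in S$ together with all remaining variables (then solving the equation for the last one), is at most $|A_i^{(n)}|^{|S|-1} \cdot |C_{j'}^{(n)} \cap [Y_n]|^{s + t - |S|}$. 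Comparing the two bounds and using $|A_i^{(n)}| \leq Y_{n-1} = Y_n^2$ gives $|C_{j'}^{(n)} \cap [Y_n]| \gg_{r, M} Y_n$.

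The final step is a dichotomy on the map $\pi \colon n \mapsto C_{j'}^{(n)}$. If $\pi$ is not injective, I pick $n_1 < n_2$ with $C_{j'}^{(n_1)} = C_{j'}^{(n_2)} =: C$: then $C$ is $\Omega_{r, M}(1)$-dense on $[Y_{n_1}]$, and a sub-interval pigeon-hole within $[Y_{n_1}]$ produces an interval of length $Y_{n_2 - 1}$ on which $C$ is $\Omega_{r, M}(1)$-dense. Provided $\mathcal{F}$ grows fast enough that $1/\mathcal{F}(M)$ lies below this implicit constant, cleaving forces $C$ to be eligible at level $n_2$, so its pullback appears among the $A_i^{(n_2)}$; the Theorem \ref{linear density} conclusion for that particular $A_i$ then provides, after undoing the translation, the required monochromatic count at $n := n_2$. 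If instead $\pi$ is a bijection, then the densest colour $C_{i_0}$ (with $|C_{i_0}| \geq N/r$) equals $C_{j'}^{(n_{i_0})}$ for some $n_{i_0}$; since $C_{i_0}$ is trivially eligible at every level by sub-interval pigeon-hole on $[N]$, the same argument wins at $n := n_{i_0}$. The main obstacle will be choosing $\mathcal{F}$ carefully in terms of the implicit constants from Theorem \ref{linear density} and the various pigeon-hole losses, a tension mirroring the cleaving argument in the Schur sketch of \S\ref{sketch section}.
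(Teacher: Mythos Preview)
Your approach is correct and reaches the same conclusion, but the organisation differs from the paper's. The paper avoids applying Theorem~\ref{linear density} at every scale and instead makes a single application at a carefully chosen scale. Specifically, the paper defines the threshold sequence explicitly by the recursion $\delta_0 = 1/r$, $\delta_n = \tfrac{1}{2}\eta(\tfrac{1}{2}\delta_{n-1}, r)$ (this plays the role of your cleaving), then calls a colour $C_i$ \emph{good at scale $n$} if it has density $\geq \delta_n$ on the fixed half-dyadic interval $(N^{1/2^{n+1}}, N^{1/2^n}]$. A chain argument on the sets $S_n := \{i : C_i \text{ good at scale } n\}$ shows that some $n \leq r$ satisfies $S_n \subset S_0 \cup \ldots \cup S_{n-1}$; for each $i \in S_n$ one then pigeonholes the witnessing coarser interval into pieces of length $N^{1/2^{n-1}}$ to obtain the pullback sets $A_i$. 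Theorem~\ref{linear density} is applied once, and the trivial upper bound forces the output colour $C_j$ into $S_n$, so one may take $i = j$ directly.

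Your version trades this single application for $r$ applications followed by a dichotomy on whether $n \mapsto C_{j'}^{(n)}$ is injective. The bijective case via the densest colour is correct but somewhat ad hoc; observe that it is really the same mechanism as your non-injective case (the densest colour is automatically eligible at every level, in particular at $n_{i_0}$), so the dichotomy could be collapsed. The paper's chain argument unifies both cases at the cost of working only with the fixed half-dyadic intervals rather than arbitrary subintervals. Your explicit worry about choosing $\mathcal{F}$ is precisely what the paper's recursion resolves: the growth function is simply $x \mapsto 2/\eta(1/(2x), r)$, and the $r^2$-fold cleaving you describe collapses to an $r$-fold one because goodness is tracked only on the $r$ half-dyadic scales rather than over all subintervals at every level.
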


The utility of this result over Theorem \ref{linear form satisfies rado} is that it can be used to show that non-trivial monochromatic solutions  exist, given any sensible notion of `trivial'. For if the only monochromatic solutions to our equation 
are trivial, then the left-hand side of \eqref{strengthened equation} should\footnote{For instance, any algebraic notion of `trivial' is likely to deliver a power saving in this estimate.} have order $o(M^{|S|+s+t-2})$, which yields a contradiction if $N$ is sufficiently large in terms of $r$.  
\begin{proof}[Proof of Theorem \ref{strong linear form satisfies rado}]
Re-labelling variables, we can write our equation in the form
$$
L_1(x) = L_2(y^2) + L_3(z),
$$
where the $L_i$ are non-singular linear forms in $s_i$ variables satisfying $s_1 + s_3 = s$, $s_1 = |S|$, $s_2 = t \geq 1$ and $L_1(1, \dots, 1) = 0$. In particular, the latter ensures that $s_1 \geq 2$, and so $s_1+s_2 \geq 3$.  It follows that the conditions of Theorem \ref{linear density} are met with $W= 1$.  Let $\eta(\delta, r)$ denote the parameter appearing in this theorem. A little thought shows that this quantity is increasing with $\delta$ and $1/r$, and redefining if necessary, we may assume that $\eta(\delta, r) \leq \min\set{\delta, r^{-1}}$. Set
\begin{equation}\label{bergelson vector sequence}
\delta_n := \begin{cases} 1/r & \text{when } n = 0;\\
			\trecip{2}\eta(\trecip{2}\delta_{n-1}, r) & \text{otherwise}.\end{cases}
\end{equation}
Let us say that a colour class $C_i$ is \emph{good at scale $n$} if 
\[
\abs{C_i \cap \left(N^{1/2^{n+1}}, N^{1/2^{n}}\right] } \geq \delta_n \abs{\Z \cap \left(N^{1/2^{n+1}}, N^{1/2^{n}}\right]}.
\]
We claim that there exists $1\leq n \leq r$ such that if any $C_i$ is good at scale $n$ then it is also good at scale $m= m(i)$ for some $0 \leq m < n$.

If the claim does not hold, then on defining
$$
S_n : = \set{i \in [r] : C_i \text{ is good at scale $n$}},
$$
we have a chain of strictly increasing subsets
$$
\emptyset \neq S_0 \subsetneq (S_0\cup S_1) \subsetneq \dots \subsetneq (S_0\cup \dots \cup S_{r}),
$$ 
the last of which must have size at least $r+1$.  This contradicts the fact that every element in this chain is a subset of $\set{1,2, \dots , r}$.

Given $n$ satisfying our claim, each colour class $C_i$ satisfies the implication 
\begin{multline}\label{bergelson implication}
\abs{C_i \cap \left(N^{1/2^{n+1}}, N^{1/2^{n}}\right] } \geq \delta_n \abs{\Z \cap \left(N^{1/2^{n+1}}, N^{1/2^{n}}\right]} \quad \implies  \\ \exists m= m(i) < n \text{ with } \abs{C_i \cap \left(N^{1/2^{m+1}}, N^{1/2^{m}}\right] } \geq \delta_m \abs{\Z \cap \left(N^{1/2^{m+1}}, N^{1/2^{m}}\right]}.
\end{multline}
Fixing $i \in S_n$, let $m(i) = m$ be such that $m < n$ and $i \in S_m$.  We can partition $(N^{1/2^{m+1}}, N^{1/2^{m}}]$ into consecutive half-open intervals of integers, all of cardinality at most $N^{1/2^{n-1}}$. In this manner, provided that $N$ is sufficiently large in terms of $r$, the pigeonhole-principle yields an interval of integers $I_i$  satisfying
$$
N^{1/2^{n-1}} \geq |I_i| \geq |I_i \cap C_i| \geq \tfrac12 \delta_{m} N^{1/2^{n-1}}\geq \tfrac12 \delta_{n-1} N^{1/2^{n-1}}.
$$
Letting $t_i+1$ denote the smallest integer in $I_i$, define the set
$$
A_i := \set{x \in[N^{1/2^{n-1}}]: x+t_i \in C_i}.
$$
Then $A_i \subset [N^{1/2^{n-1}}]$ and $|A_i| \geq \trecip{2}\delta_{n-1} N^{1/2^{n-1}}$ for all $i \in S_n$.

Notice that Theorem \ref{linear density} remains valid if there are less than $r$ sets $A_i$ of density $\delta$ (simply define new sets $A_i$ to all equal $A_1$).   Applying this result, we deduce that there exists $\tilde{C}_j:= C_j \cap \sqbrac{N^{1/2^{n}}}$ such that for all $A_i$ with $i \in S_n$ we have
\begin{multline}\label{bergelson colouring lower bound}
\sum_{L_1(x) = L_2(y^2) + L_3(z)}1_{A_i}(x_1)\dotsm 1_{A_i}(x_{s_1})1_{\tilde{C}_j}(y_1)\dots 1_{\tilde{C}_j}(y_{s_2})1_{\tilde{C}_j}(z_1)\dots 1_{\tilde{C}_j}(z_{s_3}) \\
\geq \eta(\trecip{2}\delta_{n-1}, r) N^{(2s_1+s_2+s_3-2)/2^{n}}.
\end{multline}
Since $s_1, s_2 \geq 1$ we have the estimate
\begin{equation*}
\sum_{L_1(x) = L_2(y^2) + L_3(z)}1_{A_i}(x_1)\dotsm 1_{A_i}(x_{s_1})1_{\tilde{C}_j}(y_1)\dots 1_{\tilde{C}_j}(y_{s_2})1_{\tilde{C}_j}(z_1)\dots 1_{\tilde{C}_j}(z_{s_3})\\
 \leq |\tilde{C}_j|  N^{(2s_1+s_2+s_3-3)/2^{n}}.
\end{equation*}
Therefore
$$
|C_j \cap (N^{1/2^{n+1}}, N^{1/2^{n}}] | \geq \eta(\trecip{2}\delta_{n-1}, r) N^{1/2^{n}} - N^{1/2^{n+1}}.
$$
Hence, provided that $N$ is sufficiently large in terms of $r$, we have
$$
|C_j \cap (N^{1/2^{n+1}}, N^{1/2^{n}}] | \geq \trecip{2}\eta(\trecip{2}\delta_{n-1}, r) |\Z \cap (N^{1/2^{n+1}}, N^{1/2^{n}}] |  .
$$
As $\delta_n = \trecip{2}\eta(\trecip{2}\delta_{n-1}, r)$, we conclude that $j \in S_n$, so we may take $i := j$ in \eqref{bergelson colouring lower bound}, completing the proof of the theorem.
\end{proof}

\subsection{When the quadratic form satisfies Rado's criterion}

The purpose of this subsection is to prove Theorem \ref{quadratic form satisfies rado}.  Again, we suppress dependence of implicit constants on the coefficients $a_i, b_j$ and the number of variables $s,t$.

\begin{proof}[Proof of Theorem \ref{quadratic form satisfies rado}]
Re-labelling variables, we can write our equation in the form
$$
L_1(x^2) = L_2(y^2) + L_3(z),
$$
where the $L_i$ are non-singular linear forms in $s_i$ variables satisfying $s_1 + s_2 = s \geq 3$, $s_1 = |I|$, $s_3 = t $ and $L_1(1, \dots, 1) = 0$. In particular, the latter ensures that $s_1 \geq 2$. We note that we may assume that $s_2 + s_3 \geq 1$, for otherwise Theorem \ref{quadratic-linear density} implies that for any $A \subset [N]$ with $|A| \geq \delta N$ we have
\[
\sum_{L_1(x^2)= 0} \prod_l 1_A(x_l) \gg_\delta N^{s_1-2}.
\] 
This yields Theorem \ref{quadratic form satisfies rado} since every $r$-colouring has a colour class of density at least $1/r$.

Under the assumption that $s_2 + s_3 \geq 1$,  let $C =O(1) $ denote the implicit constant appearing in Lemma \ref{l1 control}, so that for any $B \subset [N]$ and $\eta \in (0,1)$ we have the bound
\begin{equation}\label{specific l2 control}
\sum_{L_1(x^2) = L_2(y^2) + L_3(z)}\prod_l1_{(\eta N, N]}(x_l)\prod_m1_{B}(y_m)\prod_n 1_{B}(z_n)\\
\leq C\eta^{-C} N^{s_1+s_2+s_3-2} (|B|/N)^{1/2}.
\end{equation}  

Let $c_0(\delta, r)$ denote the implicit  constant occurring in the conclusion of Theorem \ref{quadratic-linear density}. Clearly this quantity is increasing with $\delta$ and $r^{-1}$, and we may assume that $c_0(\delta, r) \leq \min\set{\delta, r^{-1}}$. 

Set
$$
\delta_n := \begin{cases} 1/r & \text{if } n = 1;\\
			\brac{\frac{c_0(\delta_{n-1}/2, r)}{C(\delta_{n-1}/2)^{-C}}}^2 & \text{otherwise}.\end{cases}
$$
Define
$$
\epsilon_n(i) : = \begin{cases} 1 & \text{if } |C_i | \geq \delta_nN;\\
			0 & \text{otherwise}.\end{cases}
$$
Since $\delta_{n+1} \leq \delta_n$, the sequence $\epsilon_n = (\epsilon_n(1), \dots, \epsilon_n(r)) \in \set{0,1}^r\setminus \set{0}$ is monotone increasing in each coordinate as $n$ increases. It follows that this sequence cannot be strictly increasing if it has length at least $r+1$.  Hence there exists $1\leq n \leq r$ for which $\epsilon_n = \epsilon_{n+1}$.  In particular, for any $i$ we have the implication
\begin{equation}\label{sparse dense imp}
|C_i | \geq \delta_{n+1} N \quad \implies \quad |C_i| \geq \delta_n N.
\end{equation}

For each $C_i$ satisfying $|C_i| \geq \delta_n N$ we have
$$
|C_i\cap (\trecip{2}\delta_n N, N]| \geq \trecip{2}\delta_n N.
$$
Notice that Theorem \ref{quadratic-linear density} remains valid if there are less than $r$ sets $A_i$ of density $\delta$ (simply define new sets $A_i$ to all equal $A_1$).   We may therefore apply Theorem \ref{quadratic-linear density}, taking our dense sets to be those $C_i\cap (\trecip{2}\delta_n N, N]$ for which $|C_i| \geq \delta_n N$.  We thereby deduce that there exists $C_j$ such that for all $C_i$ satisfying $|C_i| \geq \delta_n N$ we have
\begin{equation}\label{quadratic-linear lower bound}
\sum_{L_1(x^2) = L_2(y^2) + L_3(z)}\prod_l1_{C_i\cap (\recip{2}\delta_n N, N]}(x_l)\prod_m1_{C_j}(y_m)\prod_n 1_{C_j}(z_n)
 \\
\geq c_0(\trecip{2}\delta_n, r) N^{s_1+s_2+s_3 -2}.
\end{equation}
Applying \eqref{specific l2 control}, we conclude that
$$
 C(\delta_n/2)^{-C}\brac{|C_j|/N}^{1/2} \geq c_0(\delta_n/2, r).
$$
By our construction of the sequence $\delta_n$ it follows that $ |C_j| \geq \delta_{n+1} N$, hence by \eqref{sparse dense imp} we conclude  that $ |C_j| \geq \delta_n N$. We may therefore take $i := j$ in \eqref{quadratic-linear lower bound}, completing the proof of the theorem.
\end{proof}

\section{The Moreira--Lindqvist argument}\label{moreira lindqvist}

In this section we complete our characterisation of when equation \eqref{linear equals quadratic} is partition regular (Theorem \ref{criteria}).
The methods we employ to prove Theorem \ref{quadratic form satisfies rado} do not, at present, succeed for all of the equations covered by Theorem \ref{criteria}.  We begin this section by adapting an argument of Moreira to cover those equations of the form \eqref{linear equals quadratic} for which the quadratic coefficients sum to zero, but for whom the number of variables is not sufficient for us to employ Theorem \ref{quadratic form satisfies rado}. The adaptation of Moreira's argument was explained to the author by Sofia Lindqvist. We begin by using this argument to prove Theorem \ref{bad under hindman}, where the idea  is perhaps more transparent.

\begin{proof}[Proof of Theorem \ref{bad under hindman}]
We first observe that Hindman's conjecture (Conjecture \ref{hindman}) implies the existence of infinitely many monochromatic tuples of the form $(x,y, x+y, xy)$. For given a finite list of such tuples, all monochromatic under the same colour, one can introduce finitely many new colours each attached to the $x$ appearing in a tuple. Re-applying Hindman's conjecture, one obtains a monochromatic configuration under this new colouring, and since the new colour classes introduced are all singletons (and the configuration is not), the configuration is monochromatic under the original colouring (and distinct from each tuple in the list).
 
Given an $r$-colouring $c : \N \to [r]$ define a new colouring $\tilde{c}$ by giving all odd numbers the colour $r+1$ and, if $n$ is even, then it receives the colour $c(n/2)$.  Assuming Conjecture \ref{hindman}, there exist infinitely many $\tilde c$-monochromatic tuples of the form $(x,\ y,\ x+y,\ xy)$. Since all elements of this tuple share the same parity, we deduce that every element is even.  It follows that $$(x/2,\ y/2,\ (x+y)/2,\ xy/2)$$ consists of integers which are monochromatic under $c$.  Finally, we observe that
\[
\brac{\frac{x+y}{2}}^2 - \brac{\frac{x}{2}}^2 = \brac{\frac{y}{2}}^2 + \frac{xy}{2}. \qedhere
\]
\end{proof}

\begin{theorem}\label{sum to zero}
Let $a_1, \dots, a_s, b_1, \dots, b_t \in \Z\setminus\set{0}$ with $s, t \geq 1$ and 
\begin{equation}\label{quad sum to zero}
a_1 + \dots + a_s = 0.
\end{equation}
Then in any finite colouring of $\N$ there are infinitely many tuples $(x_1, \dots, x_s, y_1, \dots, y_t)$ which are monochromatic and which solve the equation \eqref{linear equals quadratic}.
\end{theorem}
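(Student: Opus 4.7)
The plan is to split on the value of $B := b_1 + \dots + b_t$.

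If $B = 0$, take any infinite colour class $C$ (guaranteed by the pigeonhole principle). For any $u, v \in C$ the constant-diagonal tuple $(u, \dots, u, v, \dots, v)$ is monochromatic and solves the equation, since both sides vanish by the hypotheses $\sum_i a_i = 0$ and $\sum_j b_j = 0$. Letting $u, v$ range over $C$ produces infinitely many monochromatic solutions.

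If $B \neq 0$, I would deduce the result from Moreira's partition regularity theorem for the equation $a_1 X_1^2 + \dots + a_s X_s^2 = X_0$ (cf.\ \eqref{moreira eqn}) via a colouring trick. Assume first that $B > 0$. Given a finite colouring $c : \N \to [r]$, define the auxiliary $r$-colouring $c'(n) := c(Bn)$ on $\N$, and apply Moreira's theorem to $c'$ to obtain positive integers $X_0, X_1, \dots, X_s$ lying in a common colour class of $c'$ and satisfying $\sum_i a_i X_i^2 = X_0$. Setting $x_i := B X_i$ for $1 \leq i \leq s$ and $y_j := B X_0$ for $1 \leq j \leq t$ then gives
\[
\sum_{i=1}^{s} a_i x_i^2 = B^2 \sum_{i=1}^{s} a_i X_i^2 = B^2 X_0 = \Bigl(\sum_{j=1}^{t} b_j\Bigr) B X_0 = \sum_{j=1}^{t} b_j y_j,
\]
while the chain $c(x_i) = c(BX_i) = c'(X_i) = c'(X_0) = c(B X_0) = c(y_j)$ certifies that all entries share a common $c$-colour. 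Since Moreira's theorem is itself a partition regularity statement, it supplies infinitely many such monochromatic Moreira tuples, and distinct Moreira tuples give rise to distinct monochromatic solutions of our equation. When $B < 0$, the same argument applies after replacing each $a_i$ with $-a_i$ (so that Moreira's theorem is applied to $\sum_i(-a_i)X_i^2 = X_0$, which remains a valid Moreira equation since $\sum_i(-a_i) = 0$) and scaling by $|B|$ throughout: the choice $x_i := |B| X_i$, $y_j := |B| X_0$ produces $\sum_i a_i x_i^2 = -|B|^2 X_0 = B |B| X_0 = \sum_j b_j y_j$.

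The heart of the argument is Case 2: the coefficient matching achieved by the scaling $X_i \mapsto |B| X_i$ together with the colouring refinement $c'(n) := c(|B|n)$ transports a Moreira solution verbatim to a monochromatic solution of our equation. Case 1 is an immediate pigeonhole observation, and I do not expect any subtle obstacles beyond the bookkeeping involved in the sign of $B$.
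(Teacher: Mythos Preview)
Your argument is correct. Both proofs rest on Moreira's work, but they invoke it at different levels. You treat Moreira's corollary on the partition regularity of $\sum_i a_i X_i^2 = X_0$ (equation~\eqref{moreira eqn}) as a black box and pull the conclusion through the dilation $n \mapsto |B|n$; the paper instead applies Moreira's underlying configuration theorem (\cite[Theorem~1.4]{MoreiraMonochromatic}) directly, building the solution from a monochromatic pattern $\{x + u_i y,\ xy + v_j z\}$ with auxiliary integers $u_i$ satisfying $\sum_i a_i u_i^2 = 0$ and $v_j$ satisfying $\sum_j b_j v_j = 0$. Your route is shorter, and your explicit treatment of the case $B = 0$ patches a point the paper's argument leaves implicit (there the choice $x_i = b(x+u_iy)/a$ degenerates when $b=0$). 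The price is that your construction always yields $y_1 = \dots = y_t$, whereas the paper's freedom in the $v_j$ allows one to aim for solutions with distinct coordinates; this is irrelevant for the statement as written, but worth noting if one later wants non-degenerate solutions.
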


\begin{proof}
We closely follow the proof of \cite[Corollary 1.7]{MoreiraMonochromatic}.  As is shown in \cite[\S6]{MoreiraMonochromatic}, there are integers $u_1, \dots, u_s$ not all of which are zero and which satisfy
\begin{equation}\label{u quadric}
a_1u_1^2 + \dots + a_su_s^2 = 0 .
\end{equation}
We claim that we may assume that $a_1u_1 + \dots + a_su_s > 0$. If $a_1u_1 + \dots + a_su_s < 0$ then we reverse the sign of all the $u_i$. If $a_1u_1 + \dots + a_su_s = 0$ then reversing the sign of a single non-zero $u_i$ gives $a_1u_1 + \dots + a_su_s \neq 0$ and we proceed as before.

Let $v_1, \dots, v_t$ denote integers satisfying
\[
b_1 v_1 + \dots + b_tv_t = 0.
\]
For instance, one could take $b_i = 0$ for all $i$, but this is a poor choice if one wishes to generate a monochromatic solution to \eqref{linear equals quadratic} in which all variables are distinct.

Set
\begin{equation}
a := 2(a_1u_1 + \dots + a_su_s) \quad \text{and}\quad b:= b_1 + \dots + b_t.
\end{equation}
Given a colouring $c : \N \to [r]$ define 
\[
\tilde{c}(n) := 
\begin{cases} 
c(bn/a) & \text{ if } a \mid n,\\
r+ (n \bmod a)& \text{ otherwise.}
\end{cases}	
\]
Then $\tilde{c}$ is a finite colouring of $\N$.  Applying \cite[Theorem 1.4]{MoreiraMonochromatic}, there exists infinitely many tuples $(x,y,z)$ giving rise to a $\tilde c$-monochromatic configuration of the form
\begin{equation}\label{moreira config}
x,\quad x+y, \quad x+u_1y, \quad \dots \quad , \quad x+u_sy,\quad xy, \quad 
xy + v_1z, \quad \dots\quad , \quad xy+v_tz.
\end{equation}
Since $x \equiv x+y \pmod a$, we must have that $y \equiv 0 \pmod a$.  Since $x \equiv xy \pmod a$, it follows that all the elements of \eqref{moreira config} are divisible by $a$, and that the configuration
\begin{equation}\label{moreira config 2}
\frac{b(x+u_1y)}{a}, \quad \dots \quad , \quad \frac{b(x+u_sy)}{a},\quad  
\frac{b(xy + v_1z)}{a}, \quad \dots\quad , \quad \frac{b(xy+v_tz)}{a}
\end{equation}
is monochromatic under $c$.

Setting
\[
x_i := \frac{b(x+u_iy)}{a} \quad \text{and} \quad y_j := \frac{b(xy+v_jz)}{a}
\]
we obtain a monochromatic solution to the equation \eqref{linear equals quadratic}.
\end{proof}

With this in hand, we are able to complete our proof of Theorem \ref{criteria}. Since Proposition \ref{necessary} establishes the necessity of Di Nasso and Luperi Baglini's criterion, we need only show that the criterion is sufficient for partition regularity. In other words, we wish to show that if  $a_1,\dots, a_s, b_1,\dots, b_t \in \Z\setminus\set{0}$ with $s,t\geq 1$ and one of the following holds
\begin{enumerate}[(1)]
\item there exists $I \neq \emptyset$ with $\sum_{i \in I } a_i = 0$;\label{quad}
\item there exists $I \neq \emptyset$ with $\sum_{i \in I } b_i = 0$.\label{lin}
\end{enumerate}
then the equation
\begin{equation}\label{linear equals quadratic 2}
a_1x_1^2 + \dots + a_sx_s^2 = b_1 y_1 + \dots + b_t y_t
\end{equation} 
is partition regular. According to our formulation of Theorem \ref{criteria}, we may assume that \eqref{linear equals quadratic 2} does not take the form
\begin{equation}\label{bad case 2}
a(x_1^2-x_2^2) = by^2 + cz
\end{equation}
for some non-zero integers $a,b,c$.

Let us first suppose that we are in situation \eqref{lin}. Applying Theorem \ref{linear form satisfies rado} we obtain infinitely many monochromatic solutions by letting $N \to \infty$.

Next let us suppose that we are in situation \eqref{quad}. If $s \geq 3$ and $s+t \geq 5$ then we may employ Theorem \ref{quadratic form satisfies rado}.  Hence we may assume that either $s < 3$ or $s+t < 5$.
Supposing that $s < 3$, condition \eqref{quad} implies that $2 \geq s \geq |I| \geq 2$, so that $I = \set{1,2} = [s]$.  This situation is covered by Theorem \ref{sum to zero}

Finally let us suppose that $s \geq 3$ and $s+t < 5$.  Since $t \geq 1$, we must have $s = 3$ and $t = 1$. If $I = \set{1,2,3} = [s]$ then we are in the situation covered by Theorem \ref{sum to zero}.  We may therefore assume that $|I| = 2$, $s= 3$ and $t=1$. Hence our equation can be written in the form \eqref{bad case 2}, a case we do not have to deal with.  This completes our proof of Theorem \ref{criteria}.


\section*{Acknowledgments} 
The author thanks Sofia Lindqvist for the arguments of \S\ref{moreira lindqvist}, and Sam Chow for the idea of using Lemma \ref{mixed mean value}.

\bibliographystyle{amsplain}


\begin{dajauthors}
\begin{authorinfo}[sean]
  Sean Prendiville\\
  Department of Mathematics and Statistics\\
Lancaster University\\
UK\\
  s\imagedot{}prendiville\imageat{}lancaster\imagedot{}ac\imagedot{}uk \\
  \url{https://sites.google.com/view/seanprendiville/}
\end{authorinfo}
\end{dajauthors}

\end{document}